\newcounter{res}[section]
\numberwithin{res}{section}
\newtheorem{thm}[res]{Theorem}
\newtheorem{lem}[res]{Lemma}
\newtheorem{prop}[res]{Proposition}
\newtheorem{cor}[res]{Corollary}
\newtheorem*{mainthm}{Main theorem}
\theoremstyle{definition}
\newtheorem{notation}[res]{Notation}
\newtheorem{dfn}[res]{Definition}
\newtheorem{req}[res]{Remark}
\renewcommand{\S}{\ensuremath{\mathbb{S}}}
\newcommand{\id}{\ensuremath{\mathrm{id}}}
\newcommand{\ZZ}{\mathbb Z} 
\newcommand{\CC}{\mathbb C} 
\newcommand{\QQ}{\mathbb Q}
\newcommand{\RR}{\mathbb R} 
\newcommand{\F}{\mathcal{F}}
\newcommand{\tr}{\mathop{\mathrm{Tr}}\nolimits}
\newcommand{\cjg}[1]{\overline{#1}}
\renewcommand {\epsilon}{\varepsilon}
\renewcommand {\leq}{\leqslant}
\renewcommand {\bar}{\cjg}
\renewcommand\marginpar[1]{}
\newcommand\eqdef{\ensuremath{\stackrel{\textrm{def}}{=}}}
\newcommand\kup[1]{\ensuremath\left\langle #1 \right\rangle}
\newcommand{\col}{\ensuremath{\mathrm{col}}}
\newcommand{\NE}{\ensuremath{\textit{NE}}}
\newcommand{\imagesfolder}{.}
\newcommand{\ie}{i.~e.~}
\newcommand{\eg}{e.~g.~}
\newcommand{\resp}{resp.{} }
\newcommand{\FR}[1]{\textit{\textbf{FR}#1}}
\newcommand{\FRp}[1]{\textit{\textbf{FR'}#1}}
\title{Grothendieck groups of the Khovanov-Kuperberg algebras}
\author{Louis-Hadrien Robert} \thanks{The author is supported by a grant from the LabEx IRMIA} 
\address{IRMA \\
7, rue René Descartes \\
67084 Strasbourg Cedex \\
France \\}
\email{robert@math.unistra.fr}
\date{\today}
\newcommand{\sll}{\ensuremath{\mathfrak{sl}}}
\begin{document}
\maketitle
\setcounter{tocdepth}{1}
\tableofcontents
\section*{Introduction}
The aim of this paper is to take benefit of the foam nature of the Khovanov-Kuperberg algebras $K^\epsilon$ to compute the Grothendieck groups of their categories of finitely generated projective modules. 

The Khovanov-Kuperberg algebras appears when one wants to extend the $\sll_3$ link homology to tangles. It has been observed (\cite{MR1684195, 2012arXiv1206.2118M, LHR1, LHR2} that, contrasting with the $\sll_2$ case and its associated algebras $H^n$, the web bases do not correspond to the indecomposable projective modules. However, we show the following result which strengthen a little a theorem from Mackaay, Tubbenhauer and Pan \cite{2012arXiv1206.2118M}:
\begin{mainthm}
Let $W^\epsilon$ be the $\ZZ[q,q^{-1}]$-module spanned by $\epsilon$-webs subjected to the Kuperberg relations, then the map 
\[
\begin{array}{rcl}
\phi:\,\,{W^\epsilon} &\longrightarrow & K_0(K^\epsilon_{\QQ}\mathsf{\textrm{-}proj}_{\mathrm{gr}}) \\
  w &\longmapsto & [P_w].  
\end{array}
\]
is an isomorphism of $\ZZ[q,q^{-1}]$-module.
\end{mainthm}

\subsubsection*{Organization of the proof and of  the paper}
\label{sec:organisation-paper}

The map $\phi$ in the main theorem is very natural, and it follows quite immediately from the categorification of the Kuperberg bracket that it is injective. What remains to show is the surjectivity of $\phi$. The idea is to compose this map $\phi$ with an injective map (the so-called Hattori-Stallings trace see section \ref{sec:hatt-stattl-trac}) and to evaluate the dimension of the co-domain via geometrical tools (see section \ref{sec:foams-traces}). In the meanwhile, the grading has been forgotten, this causes no problem, as explained in section~\ref{sec:some-facts-about}. This will be enough to compute the Grothendieck groups (see section~\ref{sec:counting-dimension}) but not to show that the web-modules associated with non-elliptic webs give bases for them.  To show this last point, we'll need a small detour by Gornik's deformation (see section~\ref{sec:filtered-version}) of Khovanov's TQFT. 

\subsubsection*{Acknowledgments} This paper would not exist without the bright ideas of Mikhail Khovanov. The author thanks him for his warm invitation to Columbia University.

\section{The Khovanov-Kuperberg algebras}

\subsection{The decategorified picture}
\label{sec:khov-kuperb-algebr}

\subsubsection{The Hopf algebra $U_q(\sll_3)$}
\label{sec:u_qsll_3}
A more detailed introduction is given the author's PhD thesis \cite{LHRThese}.
\begin{dfn}
  The algebra $U_q(\sll_3)$ is the associative $\CC(q^{\frac12})$-algebra\footnote{The construction can be done on $\CC(q)$ but $q^{\frac12}$ allows to have more symmetry in the formulas.} with unit generated by $E_i$, $F_i$, $K_i$ and $K_i^{-1}$ for $i=1,2$ and subjected to the relations for $i$ and $j$ in $\{1,2\}$ and $i\neq j$:
\begin{align*}
&K_iK_i^{-1} = K_i^{-1} K_i=1,& \qquad &&K_1K_2 = K_2K_1,\\  
&K_iE_i=q^2E_iK_i,& \qquad &&K_iF_i = q^{-2}F_iK_i,\\
&K_iE_j=-q^{-1}E_jK_i,&\qquad &&K_iF_j = q F_jK_i,\\
&(q^1-q^{-1})(E_iF_i-F_iE_i)= K_i-K_i^{-1},&\qquad &&E_iF_j=F_jE_i,\\
&E_i^2E_j-[2]E_iE_jE_i + E_jE_i^2 = 0,&\qquad &&\\
&F_i^2F_j-[2]F_iF_jF_i + F_jF_i^2 = 0,&\qquad &&
\end{align*}
where $[n]$ stands for $\frac{q^n-q^{-n}}{q-q^{-1}}$. It can be given a structure of Hopf algebra (see \cite{LHRThese} for the formulas).
\end{dfn}
In the sequence we will consider two special $U_q(\sll_3)$-modules: $V^+$ spanned by $ e_{-1}^+$, $e_0^+$ and  $e_1^+$ and $V^-$ spanned by $e_{-1}^-$, $e_0^-$, $e_1^-$. The module $V^+$ is the $U_q(\sll_3)$-counterpart of the fundamental representation of $\sll_3$ and $V^-$ is its dual.
If $\epsilon= (\epsilon_1,\dots,\epsilon_l)$ is a finite sequence of signs, we denote by $V^\epsilon$ the $\CC(q^{\frac12})$-vector space
$\bigotimes_{i=1}^l V^{\epsilon_i}$ endowed with the structure of $U_q(\sll_3)$-module provided by the co-multiplication and by the action of $U_q(\sll_3)$ on $V^+$ and $V^-$. If $\epsilon$ is the empty sequence, then by convention $V^\epsilon$ is $\CC(q^{\frac12})$ with the structure of $U_q(\sll_3)$-module given by the co-unit.
We consider the following maps of $U_q(\sll_3)$-modules between the $V^\epsilon$'s:
\begin{align*}
  b^{+-}: \CC(q^{\frac12}) \to V^{+}\otimes V^{-}\qquad &1 \mapsto q e_{-1}^{+}\otimes e_1^{-} + e_0^{+}\otimes e_0^{-} + q^{-1} e_1^{+}\otimes e_{-1}^{-}, \\
  b^{-+}: \CC(q^{\frac12}) \to V^{-}\otimes V^{+}\qquad &1 \mapsto q^{-1} e_{-1}^{-}\otimes e_1^{+} + e_0^{-}\otimes e_0^{+} + q e_1^{-}\otimes e_{-1}^{+}, \\
  \sigma_{+-} : V^{+}\otimes V^{-} \to \CC(q^{\frac12}) \qquad & e_{-1}^{+}\otimes e_1^{-}\mapsto q,\,\quad e_0^{+}\otimes e_0^{-}\mapsto 1,\, \quad e_1^{+}\otimes e_{-1}^{-} \mapsto q^{-1},\\
  \sigma_{-+} : V^{-}\otimes V^{+} \to \CC(q^{\frac12}) \qquad & e_{-1}^{-}\otimes e_1^{+}\mapsto q^{-1},\,\quad e_0^{-}\otimes e_0^{+}\mapsto 1,\, \quad e_1^{-}\otimes e_{-1}^{+} \mapsto q,\\
 t^{+++}: \CC(q^{\frac12}) \to V^{(+,+,+)}\qquad &1 \mapsto q^{\frac{-3}{2}}e_{1}^+\otimes e_{0}^+ \otimes e_{-1}^+ + q^{\frac{-1}{2}}e_{0}^+\otimes e_{1}^+ \otimes e_{-1}^+ \\ &+ q^{\frac{-1}{2}}e_{1}^+\otimes e_{-1}^+ \otimes e_{0}^+ + q^{\frac12}e_{0}^+\otimes e_{-1}^+ \otimes e_{1}^+ \\ &+ q^{\frac12}e_{-1}^+\otimes e_{1}^+ \otimes e_{0}^+ + q^{\frac32}e_{-1}^+\otimes e_{0}^+ \otimes e_{1}^+,\\
 t^{---}: \CC(q^{\frac12}) \to V^{(-,-,-)}\qquad &1\mapsto q^{\frac{-3}{2}}e_{1}^-\otimes e_{0}^- \otimes e_{-1}^- + q^{\frac{-1}2}e_{0}^-\otimes e_{1}^- \otimes e_{-1}^- \\ &+ q^{\frac{-1}2}e_{1}^-\otimes e_{-1}^- \otimes e_{0}^- + q^{\frac12}e_{0}^-\otimes e_{-1}^- \otimes e_{1}^- \\ &+q^{\frac12}e_{-1}^-\otimes e_{1}^- \otimes e_{0}^- + q^{\frac32}e_{-1}^-\otimes e_{0}^- \otimes e_{1}^-.\\
\end{align*}
These maps are slightly different from the one defined in \cite{MR1684195}, but they are more symmetric in $q$ and $q^{-1}$.
\begin{req}\label{dfn:t}
  The maps $\sigma_{+-}$ and $\sigma_{-+}$ fix an isomorphism between the dual of $V^+$ (\resp the dual of $V^-$) and $V^-$ (\resp $V^+$). Under this isomorphisms the basis $(e_{-1}^+,e_{0}^+,e_{1}^+)$ and $(q^{-1}e_{1}^-,e_{0}^-,qe_{-1}^-)$ are dual to each other and the basis  $(e_{-1}^-,e_{0}^-,e_{1}^-)$ and $(qe_{1}^+,e_{0}^+,q^{-1}e_{-1}^-)$ are dual to each other.
\end{req}
Following Reshetikhin-Turaev~\cite{MR1036112}, we use a diagrammatic presentation of these maps (see figure~\ref{fig:graphica-maps}, they should be read from bottom to top): a vertical strand represents the identity, stacking diagrams one onto another corresponds to composition, and drawing two diagrams side by side corresponds to taking the tensor product of two maps.
\begin{figure}[!ht]
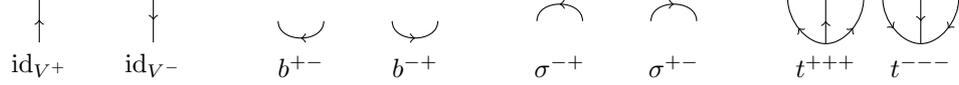

  \centering
 \begin{tikzpicture}[scale=0.6]
    \input{\imagesfolder/sw_graphicaldef0}
  \end{tikzpicture}
\hspace{0.8cm}
 \begin{tikzpicture}[scale=0.6]
    \input{\imagesfolder/sw_graphicaldef}
  \end{tikzpicture}
\hspace{0.8cm}
\begin{tikzpicture}[scale=0.6]
    \input{\imagesfolder/sw_graphicaldef2}
  \end{tikzpicture}
\hspace{0.8cm}
\begin{tikzpicture}[yscale=0.6, xscale = 0.5]
    \input{\imagesfolder/sw_graphicaldef3}
  \end{tikzpicture}
  \caption{Diagrammatic description of $b^{+-}$, $b^{-+}$, $\sigma_{-+}$, $\sigma_{+-}$, $t^{+++}$ and $t^{---}$.}
  \label{fig:graphica-maps}
\end{figure}
The $b$'s, $\sigma$'s and $t$'s yields some other maps (\eg $t_{-}^{++}\eqdef(\id_{V^{(+,+)}}\otimes \sigma_{+-})\circ(t^{+++}\otimes \id_{V^-}$) coherent with the diagrammatic presentation (see figure~\ref{fig:graphicadef2}). 
\begin{figure}[!ht]
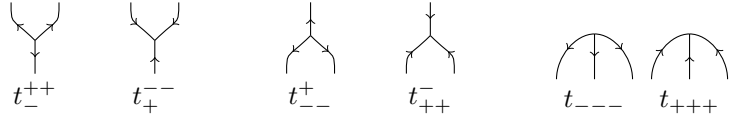

  \centering
  \begin{tikzpicture}[scale=0.7]
    \begin{scope}[scale=0.9]
    \input{\imagesfolder/sw_graphicaldef5}
  \end{scope}
\end{tikzpicture}
\hspace{1cm}
\begin{tikzpicture}[scale=0.7]
    \begin{scope}[scale=0.9]
    \input{\imagesfolder/sw_graphicaldef4}
  \end{scope}
\end{tikzpicture}
\hspace{1cm}
\begin{tikzpicture}[xscale=0.5, yscale = 0.6]
    \input{\imagesfolder/sw_graphicaldef6}
  \end{tikzpicture}
  \caption{The maps $t_{-}^{++}$, $t_{+}^{--}$, $t_{--}^{+}$, $t_{++}^{-}$, $t_{---}$ and $t_{+++}$.}
  \label{fig:graphicadef2}
\end{figure}
 \begin{req}\label{req:graphical-is-fine}
It is a straightforward computation to check that the $b$'s, the $\sigma$'s and the $t$'s can be composed so that the diagrammatic representation makes sense. We mean that two isotopic diagrams represent the same map. One only need to check that the wave moves hold: 
\[(\id_{V^+}\otimes \sigma_{-+})\circ (b^{+-}\otimes \id_{V^+})= \id_{V^+},\]and similar equalities with signs and orders changed. It can be computed by hands. 
 \end{req}
\begin{prop}[Kuperberg, \cite{MR1403861}]\label{prop:skein-relation}
  We have the following equalities between maps of $U_q(\sll_3)$-modules:
  \begin{align*}
   \kup{\!\websquare[0.4]} &= \kup{\!\webtwovert[0.4]} + \kup{\!\webtwohori[0.4]}, \\
   \kup{\!\webbigon[0.4]}  &= [2] \cdot \kup{\!\webvert[0.4]},\\
   \kup{\!\webcircle[0.4]} &= \kup{\!\webcirclereverse[0.4]} = [3]
  \end{align*}
\end{prop}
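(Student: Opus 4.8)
The proof is a direct verification. Up to isotopy every web appearing in the statement is a composite of the elementary maps $b^{\pm\mp}$, $\sigma_{\pm\mp}$, $t^{\pm\pm\pm}$ and of the maps obtained from them by bending legs; such composites are legitimate by Remark~\ref{req:graphical-is-fine}, and all of these are explicit $\CC(q^{\frac12})$-linear maps between the finite-dimensional spaces $V^\epsilon$. So it is enough to evaluate each side of each relation on a basis. For the circle, a positively oriented circle is $\sigma_{+-}\circ b^{+-}\colon \CC(q^{\frac12})\to\CC(q^{\frac12})$; since $b^{+-}(1)=qe_{-1}^+\otimes e_1^-+e_0^+\otimes e_0^-+q^{-1}e_1^+\otimes e_{-1}^-$ and $\sigma_{+-}$ sends these three tensors to $q,1,q^{-1}$ respectively,
\[
\sigma_{+-}\bigl(b^{+-}(1)\bigr)=q\cdot q+1\cdot 1+q^{-1}\cdot q^{-1}=q^2+1+q^{-2}=[3],
\]
and the reversed circle $\sigma_{-+}\circ b^{-+}$ gives $q^{-2}+1+q^2=[3]$ in the same way.

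For the bigon, I would realise $\webbigon[0.4]$ as the endomorphism $t_{++}^{-}\circ t_{-}^{++}$ of $V^-$, i.e. a splitting vertex $V^-\to V^+\otimes V^+$ obtained from $t^{+++}$ followed by a merging vertex $V^+\otimes V^+\to V^-$ obtained from $t^{---}$ by bending two legs, and then check on the three basis vectors $e_{-1}^-,e_0^-,e_1^-$ that the output equals $[2]$ times the input. Here the half-integer powers of $q$ carried by the $t$'s must cancel in pairs, leaving the integer-power answer $[2]=q+q^{-1}$; this cancellation is exactly what makes the computation slightly delicate to do by hand. Every other bigon (external edges reoriented or moved to the same side) reduces to this one by bending strands through the wave moves, so a single evaluation suffices.

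For the square, I would write $\websquare[0.4]$, $\webtwovert[0.4]$ and $\webtwohori[0.4]$ as endomorphisms of $V^+\otimes V^-$ (for the boundary orientations making all three defined): the first as the composite of the four trivalent vertices forming the inner square, the second as $\id_{V^+\otimes V^-}$, the third as a cup--cap composite $b'\circ\sigma'$ for the appropriate $b$ and $\sigma$. Checking equality on the nine basis vectors $e_i^+\otimes e_j^-$ — again, the remaining sign conventions following by rigidity — is the only substantial computation in the statement.

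The sole obstacle is bookkeeping: resolving each picture into a composite of generators with the correct orientations, and keeping track of the $q^{\pm\frac12}$ factors coming from the $t$'s without arithmetic slips; there is no conceptual content. Equivalently, the three identities are forced by the Clebsch--Gordan decompositions $V^+\otimes V^+\cong V^-\oplus(\text{a six-dimensional irreducible})$ and $V^+\otimes V^-\cong\CC(q^{\frac12})\oplus(\text{an eight-dimensional irreducible})$ together with the normalisations built into the explicit $t$'s, but carrying out that reformulation costs the same computation.
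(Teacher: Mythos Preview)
The paper does not supply a proof of this proposition: it is stated with attribution to Kuperberg~\cite{MR1403861} and left as a citation. Your approach---a direct verification by writing each diagram as a composite of the elementary generators and evaluating on basis vectors---is exactly the intended one, and your circle computation is correct. The bigon and square cases are only sketched (``I would realise\dots'', ``I would write\dots''), but the outlines are accurate and the bookkeeping you flag is indeed the only content; nothing is missing conceptually.
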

 \subsubsection{The Kuperberg bracket}
 \label{sec:kuperberg-bracket}

  In what follows, we develop the diagrammatic point of view and use the representation theory to define the Kuperberg bracket.
\begin{dfn}[Kuperberg, \cite{MR1403861}]\label{dfn:closedweb} 
  A \emph{closed web} is a cubic oriented graph (with possibly some vertex-less loops) smoothly embedded in $\RR^2$ such that every vertex is either a sink or a source.
\end{dfn}
\begin{figure}[!ht]
  \centering
  \begin{tikzpicture}[yscale= 0.6, xscale= 0.6]
    \begin{scope}
   [yscale = {1}, xscale={1},decoration={markings, mark=at
     position 0.5 with {\arrow{>}}},postaction={decorate}]
\coordinate (A) at (0,0);
\coordinate (B) at (1,0);
\coordinate (C) at (2,-0.5);
\coordinate (D) at (3,0);
\coordinate (E) at (4,0);
\coordinate (F) at (5,0.5);
\coordinate (A1) at (0,1);
\coordinate (B1) at (1,1);
\coordinate (C1) at (2,1.5);
\coordinate (D1) at (3,1);
\coordinate (E1) at (4,1);
\coordinate (F1) at (4,2);
\coordinate (G) at (5,-0.5);
\coordinate (H) at (3.5,-1);

\draw[postaction=decorate] (A1) --(A);
\draw[postaction=decorate] (A1) .. controls +(-0.5,0)  and  +(-0.5,0).. (A);
\draw[postaction=decorate] (A1) -- (B1);
\draw[postaction=decorate] (B)-- (A);
\draw[postaction=decorate] (B)--(B1);
\draw[postaction=decorate] (B)--(C);
\draw[postaction=decorate] (C1)--(B1);
\draw[postaction=decorate] (C1)--(D1);
\draw[postaction=decorate] (C1)--(F1);
\draw[postaction=decorate] (D)--(C);
\draw[postaction=decorate] (D)--(D1);
\draw[postaction=decorate] (D)--(E);
\draw[postaction=decorate] (F) -- (G);
\draw[postaction=decorate] (F)-- (E);
\draw[postaction=decorate] (F) .. controls +(0,0.5) and  +(0.4,0.4) .. (F1);
\draw[postaction=decorate] (E1)--(F1);
\draw[postaction=decorate] (E1)--(D1);
\draw[postaction=decorate] (E1)--(E);
\draw[postaction=decorate] (H)--(C);
\draw[postaction=decorate] (H) .. controls +(0.5,0) and  +(0,-0.5) .. (G);
\draw[postaction=decorate] (H) .. controls +(0.2,0.3) and  +(-0.4,-0.1) .. (G);
\draw[postaction=decorate] (7,1) circle (0.5cm);
\end{scope}
  \end{tikzpicture}  
  \caption{Example of a closed web.}
  \label{fig:example-closed-web}
\end{figure}
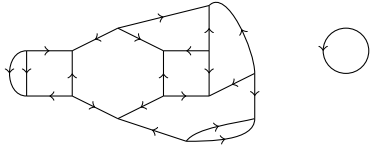
\begin{req}
  By graph we don't mean simple graph, so that a web may have multi-edges. The orientation condition is equivalent to say that the graph is bipartite (by sinks and sources). The vertex-less loops may be a strange notion from the graph theoretic point of view, to prevent this we could have introduced some meaningless 2-valent vertices\footnote{In this case the orientation condition is: around each vertex the flow module 3 is preserved.}.
\end{req}

Thanks to section~\ref{sec:u_qsll_3} a web can be interpreted as a map from $\CC(q^{\frac12})$ to $\CC(q^{\frac12})$, and thanks to the following proposition we can compute it fully combinatorially. 
\begin{prop}\label{prop:closed2elliptic}
  Every closed web contains at least, a circle, a digon or a square.
\end{prop}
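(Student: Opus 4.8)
The plan is to run an Euler characteristic (discrete Gauss--Bonnet) count on the plane graph underlying the web. First one disposes of the degenerate case: if the closed web $D$ has a vertex-less loop we are already done, since $D$ then contains a circle. So assume from now on that $D$ has no vertex-less loops, so that $D$ is a genuine finite graph, trivalent at every vertex, embedded in $\RR^2$ (equivalently, in the sphere $S^2=\RR^2\cup\{\infty\}$). The sink/source condition is exactly the statement that the vertices are properly $2$-coloured (sources versus sinks) with every edge joining the two colour classes, i.e. $D$ is bipartite; in particular $D$ has no loop-edges and every cycle of $D$ has even length. Moreover, being cubic, $D$ has neither bridges nor cut vertices: deleting a bridge, or removing a cut vertex, would leave a connected piece with exactly one vertex of degree~$2$ and all other vertices of degree~$3$, contradicting the handshake lemma. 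Hence the boundary of every face of $D$ is an honest cycle, of even length at least~$2$.

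Next I would reduce to the case where $D$ is connected, by induction on the number of connected components: if $D$ is disconnected, pick a component $G$ together with a region of $S^2\setminus G$ containing all of $D\setminus G$, apply the connected case to $G$ to find a short face, and — should that face contain other components of $D$ — pass instead to the (strictly smaller) sub-web lying inside it; a digon or square found for the sub-web is also a face of $D$ of the same combinatorial length. So assume $D$ is connected, and let $V$, $E$, $F$ be its numbers of vertices, edges and faces on $S^2$, and let $F_k$ be the number of faces bounded by exactly $k$ edges.

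Now the core computation. Trivalence gives $3V=2E$, and Euler's formula $V-E+F=2$ then yields $F=\tfrac{E}{3}+2$. Counting edge–face incidences (each edge lies on the boundary of exactly two faces, there being no bridges) gives $\sum_k kF_k=2E$, while $\sum_k F_k=F$. Combining these,
\[
\sum_k (6-k)\,F_k \;=\; 6F-2E \;=\; 6\bigl(\tfrac{E}{3}+2\bigr)-2E \;=\; 12 \;>\;0.
\]
Since $6-k\le 0$ for every $k\ge 6$, there must be a face with $k<6$ sides; as every face has even length at least~$2$, this face has $2$ or $4$ sides, i.e. it is a digon or a square, and (by the absence of bridges and cut vertices) it is a genuine one. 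This completes the argument.

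The argument is essentially routine; the only points requiring a little care are the combinatorial degeneracies — ruling out loop-edges, bridges and cut vertices, so that a face of length~$2$ or~$4$ really is a digon or a square and not a degenerate configuration — and the bookkeeping needed to pass from a connected component back to the whole web in the disconnected case. I expect the latter reduction to be the main (and only minor) obstacle.
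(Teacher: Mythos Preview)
Your argument is correct and is precisely the Euler-characteristic (discrete Gauss--Bonnet) count the paper alludes to. One small slip: the handshake lemma alone does not exclude bridges or cut vertices---a component with a single degree-$2$ vertex and the rest of degree $3$ has degree sum $3n-1$, which is even whenever $n$ is odd, so no contradiction arises. What does work is the bipartite double count you set up just before: in such a component with bipartition $A\sqcup B$ (the degree-$2$ vertex lying in $A$, say), counting edges from each side gives $3|A|-1=3|B|$, which is impossible modulo $3$; the same mod-$3$ obstruction handles cut vertices. With this adjustment the rest goes through as written.
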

The proof is easy and remains on a Euler characteristic argument. One can find it in \cite{MR1172374}.

\begin{dfn}\label{dfn:webtangle}
  A \emph{$(\epsilon_0,\epsilon_1)$-web tangle} $w$ is an intersection of a closed web $w'$ with $\RR\times [0,1]$ such that:
  \begin{itemize}
  \item there exists $\eta_0 \in ]0,1]$ such that $w\cap \RR \times [0,\eta_0] = \{1,2,\dots,l(\epsilon_0)\}\times [0,\eta_0]$,
  \item there exists $\eta_1 \in [0,1[$ such that $w\cap \RR \times [\eta_1,1] = \{1,2,\dots,l(\epsilon_1)\}\times [\eta_1,1]$,
  \item the orientations of the edges of $w$, match $-\epsilon_0$ and $\epsilon_1$ (see figure \ref{fig:exampl_webtangle} to have the conventions).
  \end{itemize}
An \emph{$\epsilon$-web} is a $(\epsilon,\emptyset)$-web tangle. If $w$ is an $\epsilon$-web, we define $\partial w \eqdef\epsilon$. The \emph{length of $\epsilon$} is its length as a finite sequence, it is denoted by $l(\epsilon)$.

If $w$ is a $(\epsilon_0,\epsilon_1)$-web tangle, we say that $\bar{w}$ is \emph{the conjugate of $w$} if it is the $(\epsilon_1,\epsilon_0)$-web tangle obtained from $w$ by taking the symmetric of $w$ with respect to the line $\RR\times\{\frac12\}$ and by changing all the orientations (see figure \ref{fig:exampl_webtangle}). It is clear that $\bar{\bar{w}}=w$.
\end{dfn}
\begin{figure}[!ht]
  \centering
  \begin{tikzpicture}[yscale= 0.25, xscale= 0.45]
    \input{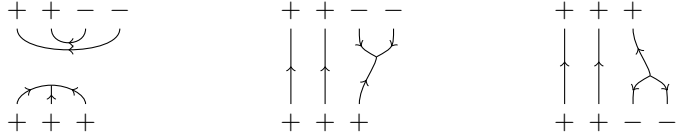}
  \end{tikzpicture}
  \caption{Two $(\epsilon_0,\epsilon_1)$-web tangles  and one $(\epsilon_1,\epsilon_0)$-web tangle with $\epsilon_0 = (+,+,+)$ and $\epsilon_1=(+,+,-,-)$. The two last ones are conjugate.}
  \label{fig:exampl_webtangle}
\end{figure}
\begin{dfn}
  An $(\epsilon_0,\epsilon_1)$-web tangle or an $\epsilon$-web is \emph{non-elliptic} if it contains no vertex-less loop, no digon and no square.
\end{dfn}

One may wonder if, given a sequence of signs $\epsilon$ there exists or not an $\epsilon$-web. The answer is actually quite easy, there exists if and only if the sum of the signs is a multiple of three. Such a sequence of signs is \emph{admissible}. In the sequel, we will only consider admissible sequences of signs.

\begin{thm}[Kuperberg,\cite{MR1403861}]\label{thm:Kuperberg}
  Let $\epsilon$ be an admissible sequences of signs, then the set $(\kup{w})_{w\in \NE(\epsilon)}$ is a base of $\hom_{U_q(\sll_3)}(V^{\epsilon}, \CC(q^{\frac12}) )$ where $\NE(\epsilon)$ is a set of representatives of isotopy classes of non-elliptic $\epsilon$-webs. This shows in particular that $\NE(\epsilon)$ is finite. 
\end{thm}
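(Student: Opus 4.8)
The plan is to establish the two halves of the basis statement --- spanning and linear independence --- separately, reading off the finiteness of $\NE(\epsilon)$ along the way. For spanning, one first shows that the brackets $\langle w\rangle$ of \emph{arbitrary} $\epsilon$-webs already span $\hom_{U_q(\sll_3)}(V^\epsilon,\CC(q^{\frac12}))$; this comes from the representation theory of $U_q(\sll_3)$: using the clasp (Jones--Wenzl-type) idempotents built out of webs one checks that every isotypic projector of a tensor power of the $V^{\pm}$, hence every intertwiner, is a linear combination of webs. Granting this, I would run a reduction argument, inducting on the number of vertices and breaking ties by the number of vertex-less loops. If an $\epsilon$-web is not non-elliptic it contains, by definition, a vertex-less loop, a digon or a square, and the corresponding relation of Proposition~\ref{prop:skein-relation} rewrites its bracket as a $\ZZ[q,q^{-1}]$-linear combination of brackets of $\epsilon$-webs that are strictly smaller in this order (a square carries four vertices, none of which survive on the right-hand side; a digon carries two; a vertex-less loop carries none but decreases the loop count). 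The induction terminates on non-elliptic webs, so these span.

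Finiteness of $\NE(\epsilon)$ is a Euler-characteristic estimate in the spirit of Proposition~\ref{prop:closed2elliptic}: a non-elliptic $\epsilon$-web is a trivalent bipartite graph properly embedded in a disk with $l(\epsilon)$ boundary points whose internal faces all have at least six sides, and combining trivalence, Euler's formula for the disk, and this face bound bounds the number of internal vertices in terms of $l(\epsilon)$ alone; only finitely many isotopy classes of such graphs exist. For linear independence --- the main obstacle --- I would compare two cardinalities. On the representation-theoretic side, $\mathrm{Rep}(U_q(\sll_3))$ is semisimple because $q^{\frac12}$ is transcendental, so $\dim_{\CC(q^{\frac12})}\hom_{U_q(\sll_3)}(V^\epsilon,\CC(q^{\frac12}))$ is the multiplicity of the trivial module in $\bigotimes_i V^{\epsilon_i}$; as $V^{+}$ and $V^-$ are minuscule, iterating the branching rule $V_\lambda\otimes V^{\pm}=\bigoplus_\mu V_\mu$ (over dominant $\mu$ of the form $\lambda$ plus a weight of $V^{\pm}$) identifies this multiplicity with the number of walks $0=\lambda_0,\lambda_1,\dots,\lambda_l=0$ in the dominant Weyl chamber of $\sll_3$ whose $i$-th step $\lambda_i-\lambda_{i-1}$ is a weight of $V^{\epsilon_i}$. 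On the web-theoretic side, I would exhibit a bijection between $\NE(\epsilon)$ and this set of dominant walks, via Kuperberg's growth algorithm (which builds a non-elliptic web out of a dominant walk) and its inverse (running the digon and square relations in a canonical confluent order reduces any $\epsilon$-web to a unique non-elliptic representative, whose boundary recovers the walk). Matching the two counts gives $|\NE(\epsilon)| = \dim_{\CC(q^{\frac12})}\hom_{U_q(\sll_3)}(V^\epsilon,\CC(q^{\frac12}))$, which with the spanning statement forces $(\langle w\rangle)_{w\in\NE(\epsilon)}$ to be a basis.

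I expect the bulk of the work to sit in the linear independence step: establishing the confluence of the growth algorithm (so that the web-to-walk correspondence is genuinely bijective in both directions) and checking that the dominant-walk count really does compute the trivial-isotypic multiplicity. A route that avoids confluence is to attach to each $w\in\NE(\epsilon)$ a vector of $V^\epsilon$ that pairs nontrivially with $\langle w\rangle$ but trivially with every non-elliptic web lying strictly above $w$ in a suitable partial order --- for instance the dominance order on the sequence of intermediate weights of the associated walk; the resulting pairing matrix is then triangular with nonzero diagonal, hence invertible, giving linear independence directly.
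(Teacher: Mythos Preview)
The paper does not prove this theorem at all: it is stated with attribution to Kuperberg \cite{MR1403861} and used as a black box, so there is no ``paper's own proof'' to compare against. Your outline is essentially a faithful sketch of Kuperberg's original argument --- reduction to non-elliptic webs via the relations of Proposition~\ref{prop:skein-relation}, the Euler-characteristic bound for finiteness, and the bijection between $\NE(\epsilon)$ and dominant lattice walks via the growth algorithm --- so in that sense you have correctly reconstructed the intended proof, just not one appearing in this paper.

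One comment on content: your first step, that arbitrary $\epsilon$-webs already span the invariant space, is the place where the argument is slightly delicate. Invoking ``clasp idempotents built out of webs'' presupposes what you are trying to prove (you need to know webs span before you can build projectors from them). The clean way, which is what Kuperberg does, is to observe that the trivalent vertices and cups/caps give all the elementary $U_q(\sll_3)$-intertwiners between tensor products of $V^+$, $V^-$ and the trivial module (because $V^+\otimes V^+ \cong V^- \oplus V_{2\varpi_1}$ and similarly, with the web maps realising the projection to $V^\mp$), and then semisimplicity plus an induction on the length of $\epsilon$ shows every invariant factors through these. Your triangularity alternative for linear independence is also legitimate and is essentially the argument in Khovanov--Kuperberg \cite{MR1684195}; it has the advantage of bypassing confluence, at the cost of requiring the explicit state-sum/coloring description of $\langle w\rangle$ evaluated on monomial tensors.
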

\begin{notation}\label{not:Wepsilon}
  Let us denote by $W^\epsilon$ the $\ZZ[q,q^{-1}]$-modules generated by $\epsilon$-webs. The previous theorem can be rephrases like this:
\[
\hom_{U_q(\sll_3)}(V^{\epsilon}, \CC(q^{\frac12}) ) \simeq W^\epsilon \otimes_{\ZZ[q,q^{-1}]}] \CC(q^{\frac12})
\]
\end{notation}

\subsubsection{Just enough about colorings}
\label{sec:just-enough-about}

\begin{dfn}
  Let $w$ be a web-tangle. A \emph{coloring} of $w$ is an application from the set of edges of $w$ to $\{-1,0,1\}$ such that at each vertex, all the colors are different (or equivalently all present). If $w$ is a web-tangle and $c$ is a coloring o $w$, the symbol $w_c$ means \emph{the web-tangle $w$ colored by $c$} \ie the data given by $w$ and $c$.

\end{dfn}
\begin{prop}[\cite{MR1172374}]
  Let $w$ be a closed web, the Kuperberg bracket evaluated in 1 is equal to the number of colorings of $w$.
\end{prop}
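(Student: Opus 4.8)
The plan is to compute $\kup{w}$ as a state sum over the colorings of $w$, by unwinding the Reshetikhin--Turaev recipe of section~\ref{sec:u_qsll_3} in the distinguished bases $(e_{-1}^{\pm},e_{0}^{\pm},e_{1}^{\pm})$. First I would slice $w\subset\RR^2$ by horizontal lines in generic position, which presents $w$ as a vertical composition of identity strands, cups $b^{+-}$, $b^{-+}$, caps $\sigma_{+-}$, $\sigma_{-+}$ and the (possibly rotated) trivalent vertices $t$; by Remark~\ref{req:graphical-is-fine} the resulting scalar in $\CC(q^{\frac12})$ equals $\kup{w}$ and does not depend on the chosen height function. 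Next I would expand each elementary piece in the bases above: an identity strand transports a basis vector unchanged, a cup or a cap contributes a monomial coefficient while only coupling a label to its negative (the duality of Remark~\ref{dfn:t}), and a vertex $t^{+++}$ or $t^{---}$ is, by its displayed formula, the sum of the $3!$ monomials indexed by the ways of placing $-1,0,1$ on its three legs. Composing all the pieces, $\kup{w}$ becomes the sum, over all assignments of a label in $\{-1,0,1\}$ to each edge of $w$, of a monomial in $q^{\frac12}$ with coefficient $1$; and such an assignment contributes a nonzero term precisely when the three labels around every vertex are pairwise distinct, that is, precisely when it is a coloring of $w$.

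The step I expect to be the main obstacle is the interplay of orientations and duality at the maxima and minima of the height function: a $V^{-}$-strand must be read with the label opposite to a $V^{+}$-strand, as dictated by Remark~\ref{dfn:t}, so that $\sigma_{+-}(e_{c}^{+}\otimes e_{b}^{-})\neq 0$ exactly for $c=-b$, and similarly for $b^{+-}$, $\sigma_{-+}$ and $b^{-+}$. After orienting all edges of $w$ compatibly with its sink/source structure, I would check from the explicit formulas that, with this convention, a cup or a cap puts no extra constraint on the color of the edge carrying it (each cup or cap being the turning point of a single edge), and that the rotated vertices $t_{-}^{++}$, $t_{--}^{+}$, $\dots$ built from the $t$'s and the $\sigma$'s again contribute exactly the monomials in which the three incident edges carry pairwise distinct colors. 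This bookkeeping is exactly what the wave moves of Remark~\ref{req:graphical-is-fine} guarantee is consistent.

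Finally I would conclude. Since every matrix coefficient of every elementary piece is a monomial with coefficient $1$, no cancellation occurs, so $\kup{w}=\sum_{c}q^{d(c)/2}$, where $c$ runs over the colorings of $w$ and $d(c)\in\ZZ$. For a single vertex-less loop this reads $\kup{\!\webcircle[0.4]}=q^{2}+1+q^{-2}=[3]$, in accordance with its $3$ colorings and with Proposition~\ref{prop:skein-relation}; more generally the same local analysis recovers the skein relations of that proposition, which together with Proposition~\ref{prop:closed2elliptic} yields an alternative, inductive proof. Setting $q=1$ turns each monomial into $1$, so $\kup{w}\big|_{q=1}$ equals the number of colorings of $w$.
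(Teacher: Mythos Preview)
Your argument is correct. The paper does not supply its own proof of this proposition; it is simply quoted from \cite{MR1172374}. Your direct Reshetikhin--Turaev expansion is the natural route: since every matrix entry of the elementary maps $b^{\pm\mp}$, $\sigma_{\pm\mp}$, $t^{+++}$ and $t^{---}$ in the distinguished bases is a monomial in $q^{\frac12}$ with coefficient $1$, the composite $\kup{w}$ is a cancellation-free sum of such monomials indexed exactly by the colorings of $w$, and specialising $q\mapsto 1$ counts them. The inductive alternative you sketch at the end---checking that the number of colorings obeys the three relations of Proposition~\ref{prop:skein-relation} at $q=1$ and invoking Proposition~\ref{prop:closed2elliptic} to reduce any closed web to the empty one---is equally valid and is the more common textbook argument.
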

\begin{req}
The relation between colorings and the representation theorety of $\sll_3$ is quite clear: to the color $i$ one may affect the element $e^\pm_{\pm i}$ of $V^\pm$. The fact that $t_{+++}$  represents an isomorphism of $(V^+)^{\wedge 3}$ correspond to the condition on colors at vertices of webs. Of course, with this analogy we lost the $q$ deformation, but it can be recover (see \cite[chapter 5]{LHRThese}).
\end{req}

\begin{dfn}
  If $\epsilon$ is a sequence of signs, then a \emph{coloring} of $\epsilon$ is simply a function from $\epsilon$ to  $\{-1,0,1\}$. If $w_c$ is colored $\epsilon$-web, the coloring $c$ induces a coloring $c'$ of $\epsilon$. We say that $c$ \emph{restricts} to $c'$ on $\epsilon$.
\end{dfn}
\begin{notation}
  We denote by $\col(\epsilon)$ (\resp $\col(w)$) the set of colorings of a sequence of signs $\epsilon$ (\resp of a web (or web-tangle) $w$) and if $c$ is a coloring of $\epsilon$ and $w$ an $\epsilon$-web, we denote by $\col_c(w)$ the set of colorings of $w$ which restrict to $c$ on $\epsilon$.
\end{notation}
The following lemma can be easily derived from \cite[theorem 2]{MR1684195}:
\begin{lem}\label{lem:uniquecoloring}
  Let $(w_i)_{i \in I}$ be a finite collection of non-elliptic $\epsilon$-webs, then there exists a coloring $c$ of $\epsilon$ such that there exists a unique $i_0$ in $I$ and a unique coloring $c_{i_0}$ of $w_{i_0}$ such that $c_{i_0}$ restricts to $c$ on $\epsilon$.  
\end{lem}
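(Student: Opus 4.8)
\textbf{Proof strategy for Lemma \ref{lem:uniquecoloring}.}
The plan is to reduce the statement to a linear-algebra fact over $\CC(q^{\frac12})$ by using that the collection $(\kup{w_i})_{i\in I}$ is linearly independent in $\hom_{U_q(\sll_3)}(V^\epsilon,\CC(q^{\frac12}))$ (Theorem \ref{thm:Kuperberg}), together with the fact (stated above) that evaluating $\kup{w}$ amounts to counting colorings. First I would recall the key input extracted from \cite[theorem 2]{MR1684195}: the evaluation of a web $w$ on a state of $V^\epsilon$ labelled by a coloring $c$ of $\epsilon$ is, up to a monomial in $q^{\frac12}$, a sum over $\col_c(w)$ of monomials; in particular the ``$q\to 1$'' specialization of this evaluation is exactly $\#\col_c(w)$. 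So the map sending $w$ to the function $c\mapsto \#\col_c(w)$ (from $\col(\epsilon)$ to $\ZZ_{\geq 0}$) records enough of $\kup w$ to detect linear independence: if a $\ZZ$-linear combination $\sum_i n_i \#\col_c(w_i)$ vanishes for every $c\in\col(\epsilon)$, then the corresponding combination of brackets vanishes after specialization, and by Theorem \ref{thm:Kuperberg} (non-ellipticity gives independence) one gets $n_i=0$ for all $i$. Hence the vectors $v_i\eqdef(\#\col_c(w_i))_{c\in\col(\epsilon)} \in \ZZ^{\col(\epsilon)}$ are linearly independent.

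Next I would turn this independence into the desired ``separating coloring''. Consider the nonnegative integer matrix $M=(\#\col_c(w_i))_{c\in\col(\epsilon),\, i\in I}$. Because its columns are linearly independent, $M$ has rank $|I|$, so in particular no column is zero and no two columns are equal; more is true, but this is not yet the statement, since I want a row (a coloring $c$) that hits exactly one column with value $1$. To get this, I would order $I=\{1,\dots,m\}$ and argue by a minimal-counterexample / induction on $m$: pick any coloring $c_0$ with $\#\col_{c_0}(w_m)>0$ and among such colorings choose one minimizing $\#\col_{c_0}(w_m)$; if this minimum is $1$ and the other entries $\#\col_{c_0}(w_i)$, $i<m$, are all $0$ we are done, otherwise I would use a ``gluing/surgery'' move on colorings — a local recoloring around a face of $w_m$ — to strictly decrease $\#\col_{c}(w_m)$ or to kill an unwanted $\#\col_c(w_i)$, exploiting that $w_m$ is non-elliptic (no digon/square), which restricts how colorings of $w_m$ can proliferate relative to those of the other $w_i$.

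The honest route, though, is just to quote \cite{MR1684195} more directly: \cite[theorem 2]{MR1684195} already constructs, for a single non-elliptic web, a coloring of its boundary admitting a unique extension, and the linear independence of the $v_i$ lets one iterate this over a finite family. Concretely, I would first find $c$ separating $w_1$ from $w_2,\dots,w_m$ in the weaker sense that $\#\col_c(w_1)=1$; then restrict attention to those $i$ with $\#\col_c(w_i)\neq 0$, and modify $c$ on the complement of the supports to knock these out one at a time, using at each stage the independence of the remaining column vectors to guarantee a coloring exists on which they differ. The induction terminates because $I$ is finite and each step reduces the number of webs ``still alive'' under the current coloring.

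The main obstacle is the surgery/recoloring step: translating linear independence of the count-vectors $v_i$ into the existence of a coloring $c$ with $\#\col_c(w_{i_0})=1$ \emph{and} $\#\col_c(w_i)=0$ for $i\neq i_0$ is strictly stronger than independence alone (independence only forbids one column being an integer combination of the others), so one genuinely needs the geometry of non-elliptic webs — i.e. that colorings of a non-elliptic web behave like flows with no small cycles — rather than pure linear algebra. I expect the cleanest writeup is to isolate this as the content borrowed from \cite[theorem 2]{MR1684195} and present the passage from one web to a finite family as a short induction, as indicated above.
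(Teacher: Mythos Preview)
Your proposal correctly identifies the crux --- linear independence of the count vectors $v_i=(\#\col_c(w_i))_c$ is not enough by itself to produce a coloring $c$ with $\#\col_c(w_{i_0})=1$ and $\#\col_c(w_i)=0$ for $i\neq i_0$ --- but then does not close the gap. The ``surgery/recoloring'' step and the iterative knock-out procedure are only sketched, and as written there is no mechanism guaranteeing that modifying $c$ to kill one unwanted $w_i$ does not resurrect another or destroy the uniqueness for $w_{i_0}$. So the argument, as it stands, is incomplete precisely at the point you flag as the main obstacle.

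The paper's proof is much shorter and avoids all of this: it uses the total (lexicographic) order on \emph{state strings} from \cite[theorem~2]{MR1684195}. Each non-elliptic $\epsilon$-web has a well-defined maximal state string (a boundary coloring) which extends uniquely to the web, and distinct non-elliptic webs have distinct maximal state strings. Given the finite family $(w_i)_{i\in I}$, one simply picks the web $w_{i_0}$ whose maximal state string is largest; call that state string $c$. Then $c$ extends uniquely to $w_{i_0}$, and for $j\neq i_0$ no coloring of $w_j$ can restrict to $c$, since every state string of $w_j$ is bounded above by the maximal state string of $w_j$, which is strictly below $c$. No induction, no surgery, no linear algebra is needed --- the total order does all the work in one step. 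If you want a complete proof, this is the argument to write; your detour through the count vectors $v_i$ is unnecessary once you have the order.
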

\begin{proof}[Sketch of the from {\cite[theorem 2]{MR1684195}}.]
  One just need to pick up a maximal non-elliptic web (ordered via the so-called state string using the lexicographical order, see \cite{MR1684195}).
\end{proof}
\subsection{Khovanov's TQFT}
\label{sec:khovanovs-tqft}

\subsubsection{Foams} All material here comes from \cite{MR2100691}.
\label{sec:foams}
\begin{dfn}
  A \emph{pre-foam} is a smooth oriented compact surface $\Sigma$ (its connected component are called \emph{facets}) together with the following data~:
\begin{itemize}
\item A partition of the connected components of the boundary into cyclically ordered 3-sets and for each 3-set $(C_1,C_2,C_3)$, three orientation-preserving diffeomorphisms $\phi_1:C_2\to C_3$, $\phi_2:C_3\to C_1$ and $\phi_3:C_1\to C_2$ such that $\phi_3 \circ \phi_2 \circ \phi_1 = \mathrm{id}_{C_2}$.
\item A function from the set of facets to the set of non-negative integers (this gives the number of \emph{dots} on each facet).
\end{itemize}
The \emph{CW-complex associated with a pre-foam} is the 2-dimensional CW-complex $\Sigma$ quotiented by the diffeomorphisms so that the three circles of one 3-set are identified and become just one called a \emph{singular circle}.
The \emph{degree} of a pre-foam $f$ is equal to $-2\chi(\Sigma')$ where $\chi$ is the Euler characteristic and $\Sigma'$ is the CW-complex associated with $f$ with the dots punctured out (\ie a dot increases the degree by 2).
\end{dfn}
\begin{req}
  The CW-complex has two local models depending on whether we are on a singular circle or not. If a point $x$ is not on a singular circle, then it has a neighborhood diffeomorphic to a 2-dimensional disk, else it has a neighborhood diffeomorphic to a Y shape times an interval (see figure \ref{fig:yshape}).
  \begin{figure}[h]
    \centering
    \begin{tikzpicture}[scale=0.8]
      \input{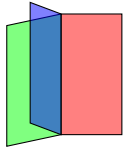}
    \end{tikzpicture}
    \caption{Singularities of a pre-foam}
    \label{fig:yshape}
  \end{figure}
\end{req}
\begin{dfn}
  A \emph{closed foam} is the image of an embedding of the CW-complex associated with a pre-foam such that the cyclic orders of the pre-foam are compatible with the left-hand rule in $\RR^3$ with respect to the orientations of the singular circles\footnote{We mean here that if, next to a singular circle, with the forefinger of the left hand we go from face 1 to face 2 to face 3 the thumb points to indicate the orientation of the singular circle (induced by orientations of facets). This is not quite canonical, physicists use more the right-hand rule, however this is the convention used in \cite{MR2100691}.}. The \emph{degree} of a closed foam is the degree of the underlying pre-foam. 
\end{dfn}
\begin{dfn}\label{dfn:wwfoam}
  If $w_b$ and $w_t$ are $(\epsilon_0,\epsilon_1)$-web-tangles, a \emph{$(w_b,w_t)$-foam} $f$ is the intersection of a foam $f'$ with $\RR\times [0,1]\times[0,1]$ such that
  \begin{itemize}
  \item there exists $\eta_0 \in ]0,1]$ such that $f\cap \RR \times [0,\eta_0]\times [0,1] = \{\frac{1}{2l(\epsilon_0)}, \frac{1}{2l(\epsilon_0)} + \frac{1}{l(\epsilon_0)},  \frac{1}{2l(\epsilon_0)} + \frac{2}{l(\epsilon_0)}, \dots, \frac{1}{2l(\epsilon_0)} + \frac{l(\epsilon_0)-1}{l(\epsilon_0)} \}\times [0,\eta_0]\times [0,1]$,
  \item there exists $\eta_1 \in [0,1[$ such that $f\cap \RR \times [\eta_1,1]\times [0,1] = \{\frac{1}{2l(\epsilon_1)}, \frac{1}{2l(\epsilon_1)} + \frac{1}{l(\epsilon_1)},  \frac{1}{2l(\epsilon_1)} + \frac{2}{l(\epsilon_1)}, \dots, \frac{1}{2l(\epsilon_1)} + \frac{l(\epsilon_1)-1}{l(\epsilon_1)}\}\times [\eta_1,1]\times [0,1]$,
  \item there exists $\eta_b \in ]0,1]$ such that $f\cap \RR \times [0,1 ]\times  [0, \eta_b] = w_b \times [0, \eta_b]$,
  \item there exists $\eta_t \in [0,1[$ such that $f\cap \RR \times [0,1 ]\times  [\eta_t, 1] = w_t \times [\eta_t,1]$,
  \end{itemize}
with compatibility of orientations of the facets of $f$ with the orientation of $w_t$ and the reversed orientation of $w_b$.
The \emph{degree} of a $(w_b,w_t)$-foam $f$ is equal to $\chi(w_b)+\chi(w_t)-2\chi(\Sigma)$ where $\Sigma$ is the underlying CW-complex associated with $f$ with the dots punctured out. 
\end{dfn}
If $f_b$ is a $(w_b,w_m)$-foam and $f_t$ is a $(w_m, w_t)$-foam we define $f_bf_t$ to be the $(w_b,w_t)$-foam obtained by gluing $f_b$ and $f_t$ along $w_m$ and resizing. This operation may be thought as a composition if we think of a $(w_1,w_2)$-foam as a morphism from $w_2$ to $w_1$ \ie from the top to the bottom. This composition map is degree preserving. Like for the webs, we define the \emph{mirror image} of a $(w_1,w_2)$-foam $f$ to be the $(w_2,w_1)$-foam which is the mirror image of $f$ with respect to $\RR\times\RR\times \{\frac12\}$ with all orientations reversed. We denote it by $\bar{f}$.
\begin{dfn}
  If $\epsilon_0=\epsilon_1=\emptyset$ and $w$ is a closed web, then a $(\emptyset,w)$-foam is simply called \emph{foam} or \emph{$w$-foam} when one wants to focus on the boundary of the foam.
\end{dfn}
All these data together lead  to the definition of a monoidal 2-category. 
\begin{dfn}
  The 2-category $\mathcal{WT}$ is the monoidal\footnote{Here we choose a rather strict point of view and hence the monoidal structure is strict (we consider everything up to isotopy), but it is possible to define the notion in a non-strict context, and the same data gives us a monoidal bicategory.} 2-category given by the following data~:
  \begin{itemize}
  \item The objects are finite sequences of signs,
  \item The 1-morphisms from $\epsilon_1$ to $\epsilon_0$ are isotopy classes (with fixed boundary) of $(\epsilon_0,\epsilon_1)$-web-tangles,
  \item The 2-morphisms from $\widehat{w_t}$ to $\widehat{w_b}$ are $\QQ$-linear combinations of isotopy classes of $(w_b,w_t)$-foams, where\ \ $\widehat{\cdot}$\ \ stands for the ``isotopy class of''. The 2-morphisms come with a grading, the composition respects the degree.
  \end{itemize}
The monoidal structure is given by concatenation of sequences at the $0$-level, and disjoint union of vertical strands or disks (with corners) at the $1$ and $2$ levels. 
\end{dfn}
\subsubsection{Khovanov's TQFT for web-tangles}
\label{sec:khovanov-tqft-web}
In \cite{MR2100691}, Khovanov defines a numerical invariant for pre-foams and this allows him to construct a TQFT $\mathcal{F}$ from the category $\hom_{\mathcal{WT}}(\emptyset,\emptyset)$ to the category of graded $\QQ$-modules, (via a universal construction à la BHMV 
\cite{MR1362791}). This TQFT is graded (this comes from the fact that pre-foams with non-zero degree are evaluated to zero), and satisfies the following local relations (brackets indicate grading shifts)~:
\begin{align*}
\mathcal{F}\left(\websquare[0.4]\,\right) &= 
\mathcal{F}\left({\webtwovert[0.4]}\,\right) \oplus
\mathcal{F}\left({\webtwohori[0.4]}\,\right), \\
\mathcal{F}\left({\webbigon[0.4]}\,\right)  &= 
\mathcal{F}\left({\webvert[0.4]}\,\right)\{-1\}\oplus \mathcal{F}\left({\webvert[0.4]}\right)\{1\},\\
\mathcal{F}\left({\webcircle[0.4]\,}\right) &= 
\mathcal{F}\left({\webcirclereverse[0.4]}\,\right) = \QQ\{-2\} \oplus \QQ \oplus\QQ\{2\}.
  \end{align*}
These relations show that $\mathcal{F}$ is a categorified counterpart of the Kuperberg bracket. We sketch the construction below.
\begin{dfn}
We denote by $\mathcal{A}$ the Frobenius algebra $\ZZ[X]/(X^3)$ with trace $\tau$ given by:
\[\tau(X^2)=-1, \quad \tau(X)=0, \quad \tau(1)=0.\] 
We equip $\mathcal{A}$ with a graduation by setting $\deg(1)=-2$, $\deg(X)=0$ and $\deg(X^2)=2$. With these settings, the multiplication has degree 2 and the trace has degree -2. The co-multiplication is determined by the multiplication and the trace and we have:
\begin{align*}
  &\Delta(1) = -1\otimes X^2 - X\otimes X - X^2\otimes 1 \\
  &\Delta(X) = -X\otimes X^2 - X^2\otimes X \\
  &\Delta(X^2) = -X^2\otimes X^2
\end{align*}
\end{dfn}
This Frobenius algebra gives us a 1+1 TQFT (this is well-known, see~\cite{MR2037238} for details), we denote it by $\mathcal{F}$: the circle is sent on $\mathcal{A}$, a cup on the unity, a cap on the trace, and a pair of pants either on the multiplication or the co-multiplication. A dot on a surface represents multiplication by $X$ so that $\mathcal{F}$ extends to the category of oriented dotted (1+1)-cobordisms.
We have a surgery formula given by figure~\ref{fig:surg} and an evaluation for dotted sphere: a sphere with two dots evaluates to $-1$, other dotted spheres evaluates to $0$. 
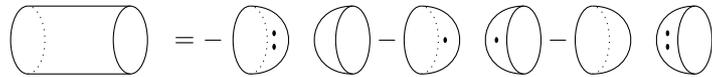
\begin{figure}[h!]
  \centering
  \begin{tikzpicture}[scale=0.45]
    \begin{scope}[xshift=-0.5cm]
       \draw (0,-1) arc (270:90:0.5 and 1);
      \draw[dotted] (0,-1) arc (-90:90:0.5 and 1);
      \draw (3,0) ellipse (0.5 and 1);
      \draw (0,-1) -- (3,-1);
      \draw (0,1) -- (3,1);
\end{scope}
\node at (4.5,0) {$=-$};
\node at (10,0) {$-$};
\node at (15,0) {$-$};
\begin{scope}[xshift=6cm]
       \draw (0,-1) arc (270:90:0.5 and 1);
      \draw[dotted] (0,-1) arc (-90:90:0.5 and 1);
      \draw (3,0) ellipse (0.5 and 1);
      \draw (0,-1) .. controls +(1.5,0) and +(1.5,0) .. (0,1);
      \draw (3,-1) .. controls +(-1.5,0) and +(-1.5,0) .. (3,1);
      \filldraw (0.7,0.2) ellipse (1pt and 2pt);
      \filldraw (0.7,-0.2) ellipse (1pt and 2pt);
\end{scope}
\begin{scope}[xshift=11cm]
       \draw (0,-1) arc (270:90:0.5 and 1);
      \draw[dotted] (0,-1) arc (-90:90:0.5 and 1);
      \draw (3,0) ellipse (0.5 and 1);
      \draw (0,-1) .. controls +(1.5,0) and +(1.5,0) .. (0,1);
      \draw (3,-1) .. controls +(-1.5,0) and +(-1.5,0) .. (3,1);
      \filldraw (0.7,0) ellipse (1pt and 2pt);
      \filldraw (2.2,0) ellipse (1pt and 2pt);
\end{scope}
\begin{scope}[xshift=16cm]
       \draw (0,-1) arc (270:90:0.5 and 1);
      \draw[dotted] (0,-1) arc (-90:90:0.5 and 1);
      \draw (3,0) ellipse (0.5 and 1);
      \draw (0,-1) .. controls +(1.5,0) and +(1.5,0) .. (0,1);
      \draw (3,-1) .. controls +(-1.5,0) and +(-1.5,0) .. (3,1);
      \filldraw (2.2,-0.2) ellipse (1pt and 2pt);
      \filldraw (2.2,0.2) ellipse (1pt and 2pt);
\end{scope}
  \end{tikzpicture}
  \caption{The surgery formula for the TQFT $\mathcal{F}$.}
  \label{fig:surg}
\end{figure}

 This TQFT gives of course a numerical invariant for closed dotted oriented surfaces. If one defines numerical values for the differently dotted theta pre-foams (the theta pre-foam consists of 3 disks with trivial diffeomorphisms between their boundary see figure \ref{fig:thetapre}) then by applying the surgery formula and using the values of dotted spheres given by the TQFT, one is able to compute a numerical value for all pre-foams. 

\begin{figure}[h!]
  \centering
  \begin{tikzpicture}[scale = 0.65]
    \input{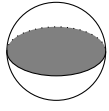}
  \end{tikzpicture}
  \caption{The dotless theta pre-foam.}
  \label{fig:thetapre}
\end{figure}
\begin{figure}[h!]
  \centering
\begin{tikzpicture}[scale = 0.65]
\input{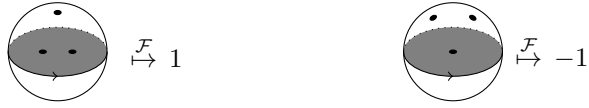}
\end{tikzpicture}
  \caption{The evaluations of dotted theta foams, the evaluation is unchanged when one cyclically permutes the faces. All the configurations which cannot be obtained from these by cyclic permutation are sent to $0$ by $\mathcal{F}$.}
  \label{fig:thetaeval}
\end{figure}

In \cite{MR2100691}, Khovanov shows that setting the evaluations of the dotted theta foams as shown on figure \ref{fig:thetaeval}, leads to a well defined numerical invariant $\mathcal{F}$ for pre-foams. This numerical invariant gives the opportunity to build a (closed web, $(\cdot,\cdot)$-foams)-TQFT: for a given web $w$, consider the $\QQ$-module generated by all the $(w,\emptyset)$-foams, and mod this space out by the kernel of the bilinear map $(f,g)\mapsto \mathcal{F}(\bar{f}g)$. Note that $\bar{f}g$ is a closed foam. Khovanov showed that the obtained graded vector spaces is finite dimensional with graded dimensions given by the Kuperberg formulas, and he showed that we have the local relations described on figure~\ref{fig:localrel}.

\begin{figure}[h]
  \centering 
  \begin{tikzpicture}[scale=0.25]
\begin{scope}[xshift=0cm, yshift= 0cm, decoration={markings, mark=at
     position 0.5 with {\arrow{>}}},postaction={decorate}]
\draw[postaction= {decorate}] (-2,0) arc (-180:0:2 and 1);
\draw (-2,0) arc (180:0:2);
\draw[dashed] (-2,0) arc (180:0:2 and 1);
\draw[dashed] (-2,0) arc (-180:-135:2);
\draw[dashed] (2,0) arc (0:-45:2);
\draw (0,0)++(-135:2) arc (-135:-45:2);
\draw[dashed] (0,0) +(135:2) -- +(45:2);
\draw (0,0)++(135:2) --  ++(-0.7,0) -- ++(-135:4) -- ++(7,0) -- ++(45:4) -- (45:2);
\filldraw (0,1.8) ellipse (3pt and 1.5pt);
\end{scope}
\node at (5,0) {$=-$};
\begin{scope}[xshift=10cm, yshift= 0cm, decoration={markings, mark=at
     position 0.5 with {\arrow{>}}},postaction={decorate}]
\draw[postaction= {decorate}] (-2,0) arc (-180:0:2 and 1);
\draw (-2,0) arc (180:0:2);
\draw[dashed] (-2,0) arc (180:0:2 and 1);
\draw[dashed] (-2,0) arc (-180:-135:2);
\draw[dashed] (2,0) arc (0:-45:2);
\draw (0,0)++(-135:2) arc (-135:-45:2);
\draw[dashed] (0,0) +(135:2) -- +(45:2);
\draw (0,0)++(135:2) --  ++(-0.7,0) -- ++(-135:4) -- ++(7,0) -- ++(45:4) -- (45:2);
\filldraw (0,-1.8) ellipse (3pt and 1.5pt);
\end{scope}
\node at (15,0) {$=$};
\begin{scope}[xshift=20cm, yshift= 0cm, decoration={markings, mark=at
     position 0.5 with {\arrow{>}}},postaction={decorate}]
\draw (0,0)++(135:2) --  ++(-0.7,0) -- ++(-135:4) -- ++(7,0) -- ++(45:4) -- (135:2);
\end{scope}
\begin{scope} [yshift= -6cm]
  \begin{scope}[xshift=0cm, yshift= 0cm, decoration={markings, mark=at
     position 0.5 with {\arrow{>}}},postaction={decorate}]
\draw[postaction= {decorate}] (-2,0) arc (-180:0:2 and 1);
\draw (-2,0) arc (180:0:2);
\draw[dashed] (-2,0) arc (180:0:2 and 1);
\draw[dashed] (-2,0) arc (-180:-135:2);
\draw[dashed] (2,0) arc (0:-45:2);
\draw (0,0)++(-135:2) arc (-135:-45:2);
\draw[dashed] (0,0) +(135:2) -- +(45:2);
\draw (0,0)++(135:2) --  ++(-0.7,0) -- ++(-135:4) -- ++(7,0) -- ++(45:4) -- (45:2);
\end{scope}
\node at (5,0) {$=$};
\begin{scope}[xshift=10cm, yshift= 0cm, decoration={markings, mark=at
     position 0.5 with {\arrow{>}}},postaction={decorate}]
\draw[postaction= {decorate}] (-2,0) arc (-180:0:2 and 1);
\draw (-2,0) arc (180:0:2);
\draw[dashed] (-2,0) arc (180:0:2 and 1);
\draw[dashed] (-2,0) arc (-180:-135:2);
\draw[dashed] (2,0) arc (0:-45:2);
\draw (0,0)++(-135:2) arc (-135:-45:2);
\draw[dashed] (0,0) +(135:2) -- +(45:2);
\draw (0,0)++(135:2) --  ++(-0.7,0) -- ++(-135:4) -- ++(7,0) -- ++(45:4) -- (45:2);
\filldraw (0,1.8) ellipse (3pt and 1.5pt);
\filldraw (0,-1.8) ellipse (3pt and 1.5pt);
\end{scope}
\node at (15,0) {$=$};
\begin{scope}[xshift=20cm, yshift= 0cm, decoration={markings, mark=at
     position 0.5 with {\arrow{>}}},postaction={decorate}]
\draw (0,0)++(135:2) --  ++(-0.7,0) -- ++(-135:4) -- ++(7,0) -- ++(45:4) -- (135:2);
\filldraw (0.5,0) ellipse (3pt and 1.5pt);
\filldraw (0,0) ellipse (3pt and 1.5pt);
\filldraw (-0.5,0) ellipse (3pt and 1.5pt);
\end{scope}
\node at (25,0) {$=0$};
\end{scope}
\begin{scope}[yshift=-12cm]
  \begin{scope}[xshift=0cm, yshift= 0cm, decoration={markings, mark=at
     position 0.5 with {\arrow{>}}},postaction={decorate}]
\draw[postaction= {decorate}] (-2,0) arc (-180:0:2 and 1);
\draw (-2,0) arc (180:0:2);
\draw[dashed] (-2,0) arc (180:0:2 and 1);
\draw[dashed] (-2,0) arc (-180:-135:2);
\draw[dashed] (2,0) arc (0:-45:2);
\draw (0,0)++(-135:2) arc (-135:-45:2);
\draw[dashed] (0,0) +(135:2) -- +(45:2);
\draw (0,0)++(135:2) --  ++(-0.7,0) -- ++(-135:4) -- ++(7,0) -- ++(45:4) -- (45:2);
\filldraw (-0.2,-1.8) ellipse (3pt and 1.5pt);
\filldraw (0.2,-1.8) ellipse (3pt and 1.5pt);
\end{scope}
\node at (5,0) {$=-$};
\begin{scope}[xshift=10cm, yshift= 0cm, decoration={markings, mark=at
     position 0.5 with {\arrow{>}}},postaction={decorate}]
\draw[postaction= {decorate}] (-2,0) arc (-180:0:2 and 1);
\draw (-2,0) arc (180:0:2);
\draw[dashed] (-2,0) arc (180:0:2 and 1);
\draw[dashed] (-2,0) arc (-180:-135:2);
\draw[dashed] (2,0) arc (0:-45:2);
\draw (0,0)++(-135:2) arc (-135:-45:2);
\draw[dashed] (0,0) +(135:2) -- +(45:2);
\draw (0,0)++(135:2) --  ++(-0.7,0) -- ++(-135:4) -- ++(7,0) -- ++(45:4) -- (45:2);
\filldraw (0.2,1.8) ellipse (3pt and 1.5pt);
\filldraw (-0.2,1.8) ellipse (3pt and 1.5pt);
\end{scope}
\node at (15,0) {$=$};
\begin{scope}[xshift=20cm, yshift= 0cm, decoration={markings, mark=at
     position 0.5 with {\arrow{>}}},postaction={decorate}]
\draw (0,0)++(135:2) --  ++(-0.7,0) -- ++(-135:4) -- ++(7,0) -- ++(45:4) -- (135:2);
\filldraw (0,0) ellipse (3pt and 1.5pt);
\end{scope}
\end{scope}
\end{tikzpicture}

\vspace{0.7cm}
\begin{tikzpicture}[scale=0.24]

\begin{scope}[xshift=0cm, yshift= 0cm, decoration={markings, mark=at
     position 0.5 with {\arrow{>}}},postaction={decorate}]
\draw (0,-2) arc (270:90:1 and 2);
\draw[dashed] (0,-2) arc (-90:90:1 and 2);
\fill[gray] (3,0) ellipse (1 and 2);
\draw (3,-2) arc (270:90:1 and 2);
\draw[dashed] (3,-2) arc (-90:90:1 and 2);
\fill[gray] (6,0) ellipse (1 and 2);
\draw (6,-2) arc (270:90:1 and 2);
\draw[dashed] (6,-2) arc (-90:90:1 and 2);
\draw (9,0) ellipse (1 and 2);
\draw (0,-2) -- (9,-2);
\draw (0,2) -- (9,2);
\node at (12,0) {$=-$};
\draw (15,-2) arc (270:90:1 and 2);
\draw[dashed] (15,-2) arc (-90:90:1 and 2);
\draw (15,-2) arc (-90:90:2 and 2);
\draw (20,-2) arc (-90:-270:2 and 2);
\draw(20,0) ellipse (1 and 2);
\end{scope}

\begin{scope}[xshift=-2cm, yshift= -6cm, decoration={markings, mark=at
     position 0.5 with {\arrow{>}}},postaction={decorate}]
\draw (0,-2) arc (270:90:1 and 2);
\draw[dashed] (0,-2) arc (-90:90:1 and 2);
\fill[gray] (3,0) ellipse (1 and 2);
\draw[postaction= {decorate}] (3,-2) arc (270:90:1 and 2);
\draw[dashed] (3,-2) arc (-90:90:1 and 2);
\draw (6,0) ellipse (1 and 2);
\draw (0,-2) -- (6,-2);
\draw (0,2) -- (6,2);
\node at (8,0) {$=$};
\draw (10,-2) arc (270:90:1 and 2);
\draw[dashed] (10,-2) arc (-90:90:1 and 2);
\draw (10,-2) arc (-90:90:2 and 2);
\draw (15,-2) arc (-90:-270:2 and 2);
\draw(15,0) ellipse (1 and 2);
\fill (15,0) ellipse (2pt and 4pt);
\node at (17,0) {$-$};
\fill (19,0) ellipse (2pt and 4pt);b
\draw (19,-2) arc (270:90:1 and 2);
\draw[dashed] (19,-2) arc (-90:90:1 and 2);
\draw (19,-2) arc (-90:90:2 and 2);
\draw (24,-2) arc (-90:-270:2 and 2);
\draw(24,0) ellipse (1 and 2);
\end{scope} 
\end{tikzpicture}

\vspace{0.7cm}
\begin{tikzpicture}[scale=0.4]
\begin{scope}[xshift=0cm, yshift= 0cm, decoration={markings, mark=at
     position 0.5 with {\arrow{>}}},postaction={decorate}]
\draw (-2,0) -- +(1,0);
\draw (1,0) -- +(1,0);
\draw[dashed] (-1,0) .. controls +(1,0.5) and +(-1,0.5).. +(2,0);
\draw (-1,0) .. controls +(1,-0.5) and +(-1,-0.5).. +(2,0);
\draw (-2,3) -- +(1,0);
\draw (1,3) -- +(1,0);
\draw (-1,3) .. controls +(1,0.5) and +(-1,0.5).. +(2,0);
\draw (-1,3) .. controls +(1,-0.5) and +(-1,-0.5).. +(2,0);
\draw (-2,0) -- +(0,3);
\draw[postaction={decorate}] (-1,0) -- +(0,3);
\draw[postaction={decorate}] (1,3) -- +(0,-3);
\draw (2,0) -- +(0,3);
\end{scope}
\node at (3,1.5) {$=$};
\begin{scope}[xshift=6cm, yshift= 0cm, decoration={markings, mark=at
     position 0.5 with {\arrow{>}}},postaction={decorate}]
\draw (-2,0) -- +(1,0);
\draw (1,0) -- +(1,0);
\draw[dashed] (-1,0) ..controls +(1,0.5) and +(-1,0.5).. +(2,0);
\draw (-1,0) ..controls +(1,-0.5) and +(-1,-0.5).. +(2,0);
\draw (-2,3) -- +(1,0);
\draw (1,3) -- +(1,0);
\draw (-1,3) ..controls +(1,0.5) and +(-1,0.5).. +(2,0);
\draw (-1,3) ..controls +(1,-0.5) and +(-1,-0.5).. +(2,0);
\draw (-2,0) -- +(0,3);
\draw[postaction={decorate}] (-1,0) .. controls +(0,1.5) and +(0,1.5).. +(2,0);
\draw[postaction={decorate}] (1,3) .. controls +(0,-1.5) and +(0,-1.5).. +(-2,0);
\draw (2,0) -- +(0,3);
\fill (0,3) circle (2pt and 2pt);
\end{scope}
\node at (9,1.5) {$-$}; 
\begin{scope}[xshift=12cm, yshift= 0cm, decoration={markings, mark=at
     position 0.5 with {\arrow{>}}},postaction={decorate}]
\draw (-2,0) -- +(1,0);
\draw (1,0) -- +(1,0);
\draw[dashed] (-1,0).. controls +(1,0.5) and +(-1,0.5).. +(2,0);
\draw (-1,0) ..controls +(1,-0.5) and +(-1,-0.5).. +(2,0);
\draw (-2,3) -- +(1,0);
\draw (1,3) -- +(1,0);
\draw (-1,3).. controls +(1,0.5) and +(-1,0.5).. +(2,0);
\draw (-1,3).. controls +(1,-0.5) and +(-1,-0.5).. +(2,0);
\draw (-2,0) -- +(0,3);
\draw[postaction={decorate}] (-1,0) .. controls +(0,1.5) and +(0,1.5).. +(2,0);
\draw[postaction={decorate}] (1,3) .. controls +(0,-1.5) and +(0,-1.5).. +(-2,0);
\draw (2,0) -- +(0,3);
\fill (0,0) circle (2pt and 2pt);
\end{scope} 
\end{tikzpicture}

\vspace{0.7cm}
\begin{tikzpicture}[scale=0.32]
\begin{scope}[xshift=0cm, yshift= 0cm]
\draw (0,0) -- (2,0);
\draw (2,0) -- (1,1);
\draw (1,1) -- (-1,1); 
\draw (-1,1) -- (0,0);
\draw (0,0) -- +(-0.5,-0.5);
\draw (2,0) -- +(0.7,-0.3);
\draw (1,1) -- +(0.5,0.5); 
\draw (-1,1) -- +(-0.7,0.3);
\draw (0,-4) -- (2,-4);
\draw[dashed] (2,-4) -- (1,-3);
\draw[dashed] (1,-3) -- (-1,-3); 
\draw (-1,-3) -- (-0.5,-3.5);
\draw[dashed] (-0.5,-3.5) -- (0,-4);
\draw (0,-4) -- ++(-0.5,-0.5) -- +(0,4);
\draw (2,-4) -- ++(0.7,-0.3)--+(0,4);
\draw[dashed] (1,-3) -- ++(0.5,0.5)--+(0,3);
\draw (1.5,0.5) -- (1.5,1.5); 
\draw (-1,-3) -- ++(-0.7,0.3)--+(0,4);
\draw (0,-4) -- +(0,4);
\draw (2,-4) -- +(0,4);
\draw[dashed] (1,-3) -- +(0,3);
\draw (1,0)-- +(0,1);
\draw (-1,-3) -- +(0,4);
\end{scope}
\node at (3.9,-2) {$=-$};
\begin{scope}[xshift=7cm, yshift= 0cm, decoration={markings, mark=at
     position 0.5 with {\arrow{>}}},postaction={decorate}]
\fill[gray, opacity =0.5] (-1,1) -- (0,0) arc (180:225:1) -- +(-1,1) arc (225:180:1)-- cycle;
\fill[gray,opacity =0.5] (1,1) -- (2,0) arc (0:-135:1) -- +(-1,1) arc(-135:0:1) --cycle;
\fill[gray,opacity=0.5] (-1,-3) -- (0,-4) arc (180:45:1) -- +(-1,1) arc (45:180:1)-- cycle;
\fill[gray,opacity=0.5] (1,-3) -- (2,-4) arc (0:45:1) -- +(-1,1) arc(45:0:1) --cycle;
\draw (0,0) -- (2,0);
\draw (2,0) -- (1,1);
\draw (1,1) -- (-1,1); 
\draw (-1,1) -- (0,0);
\draw (0,0) -- +(-0.5,-0.5);
\draw (2,0) -- +(0.7,-0.3);
\draw (1,1) -- +(0.5,0.5); 
\draw (-1,1) -- +(-0.7,0.3);
\draw (0,-4) -- (2,-4);
\draw[dashed] (2,-4) -- (1,-3);
\draw[dashed] (1,-3) -- (-1,-3); 
\draw (-1,-3) -- (-0.5,-3.5);
\draw[dashed] (-0.5,-3.5) -- (0,-4);
\draw (0,-4) -- ++(-0.5,-0.5) -- +(0,4);
\draw (2,-4) -- ++(0.7,-0.3)--+(0,4);
\draw[dashed] (1,-3) -- ++(0.15,0.15);
\draw (1,-3) + (0.15,0.15) --++(0.5,0.5) -- +(0,1.7);
\draw[dashed] (1.5,0.5) -- +(0,-1.3);
\draw (1.5,0.5) -- (1.5,1.5); 
\draw (-1,-3) -- ++(-0.7,0.3)--+(0,4);
\draw (0,-4) arc (180:0:1);
\draw (0,0) arc (-180:0:1);
\draw[dashed] (1,1) arc (0:-180:1);
\draw (-1,-3) arc (180:120:1);
\draw[dashed] (1,-3) arc (0:120:1);
\draw (0,-3)++(45:1) -- +(1,-1);
\draw (0,1)++(-135:1) -- +(1,-1);
\end{scope}
\node at (10.9,-2) {$-$};
\begin{scope}[xshift=14cm, yshift= 0cm, decoration={markings, mark=at
     position 0.5 with {\arrow{>}}},postaction={decorate}]
\fill[gray,opacity=0.5] (-1,1) ..controls +(0,-1.5) and +(-0.1,0).. (-0.5,-0.8) -- ++(2,0).. controls +(-0.1,0) and +(0,-1.5) .. (1,1);
\fill[gray,opacity=0.5] (0,0) ..controls +(0,-0.8) and +(0.1,0).. (-0.5,-0.8) -- ++(2,0).. controls +(0.1,0) and +(0,-0.8) .. (2,0);
\fill[gray,opacity=0.5] (-1,-3) ..controls +(0,0.8) and +(-0.1,0).. ++(0.5,0.8) -- ++(2,0).. controls +(-0.1,0) and +(0,0.8) .. (1,-3);
\fill[gray,opacity=0.5] (0,-4) ..controls +(0,1.5) and +(0.1,0).. (-0.5,-2.2) -- ++(2,0).. controls +(0.1,0) and +(0,1.5) .. (2,-4);
\draw (0,0) -- (2,0);
\draw (2,0) -- (1,1);
\draw (1,1) -- (-1,1); 
\draw (-1,1) -- (0,0);
\draw (0,0) -- +(-0.5,-0.5);
\draw (2,0) -- +(0.7,-0.3);
\draw (1,1) -- +(0.5,0.5); 
\draw (-1,1) -- +(-0.7,0.3);
\draw (0,-4) -- (2,-4);
\draw[dashed] (2,-4) -- (1,-3);
\draw[dashed] (1,-3) -- (-1,-3); 
\draw (-1,-3) -- (-0.5,-3.5);
\draw[dashed] (-0.5,-3.5) -- (0,-4);
\draw (0,-4) -- ++(-0.5,-0.5) -- +(0,4);
\draw (2,-4) -- ++(0.7,-0.3)--+(0,4);
\draw[dashed] (1,-3) -- ++(0.5,0.5)--+(0,0.3);
\draw (1.5,-2.2) -- (1.5,-0.8);
\draw[dashed] (1.5,-0.8) -- (1.5,0.5);
\draw (1.5,0.5)-- (1.5,1.5 );
\draw (1.5,0.5) -- (1.5,1.5); 
\draw (-1,-3) -- ++(-0.7,0.3)--+(0,4);
\draw[dashed] (1,-3) ..controls +(0,0.8) and +(-0.1,0) .. ++(0.5,0.8).. controls +(0.1,0) and +(0,1.5) .. (2,-4);
\draw (-1,-3) ..controls +(0,0.8) and +(-0.1,0) .. ++(0.5,0.8).. controls +(0.1,0) and +(0,1.5) .. (0,-4);
\draw (1,1) ..controls +(0,-1.5)  and +(-0.1,0) .. (1.5,-0.8).. controls +(0.1,0) and +(0,-0.8) .. (2,0);
\draw (-1,1) ..controls +(0,-1.5) and +(-0.1,0) .. (-0.5,-.8).. controls +(0.1,0) and +(0,-0.8) .. (0,0);
\draw (-0.5,-0.8)-- +(2,0);
\draw (-0.5,-2.2)-- +(2,0);
\end{scope} 
\end{tikzpicture}

\vspace{0.7cm}
\begin{tikzpicture}[scale=0.5]
\input{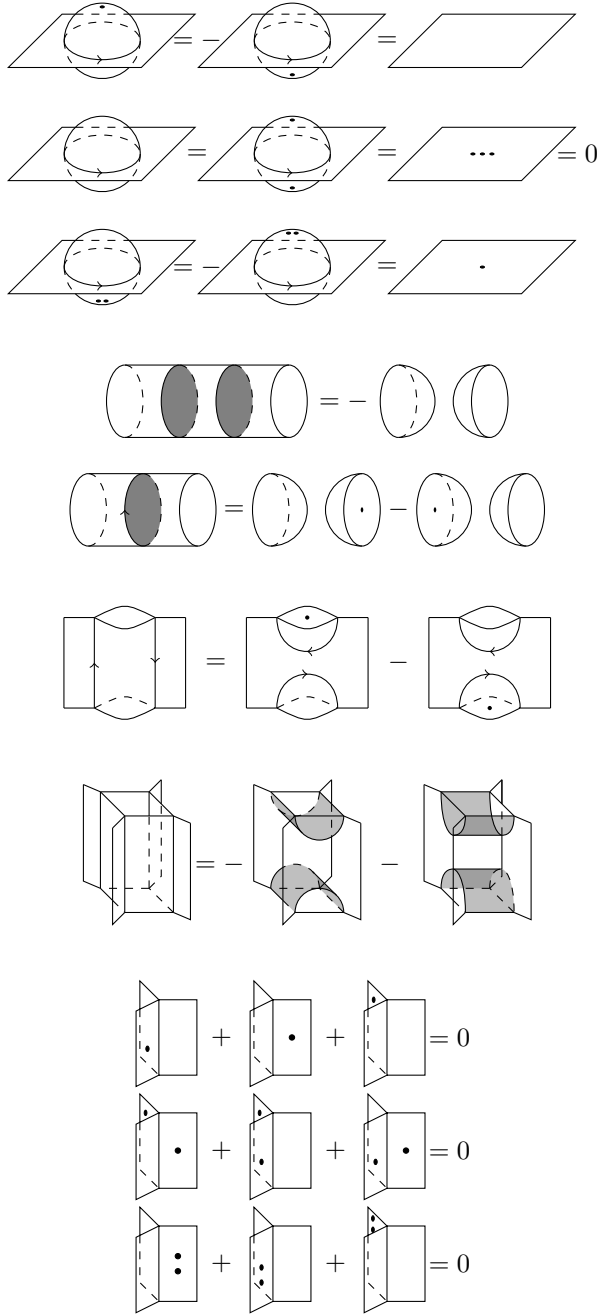} 
\end{tikzpicture}
  \caption{Local relations for 2-morphism in $\mathbb{WT}$. The first 3 lines are called bubbles relations, the 2 next are called bamboo relations, the one after digon relation, then we have the square relation and the 3 last ones are the dots migration relations.}
  \label{fig:localrel}
\end{figure}

This permits to define a new graded 2-category $\mathbb{WT}$. Its objects and its 1-morphisms are the ones of the 2-category $\mathcal{WT}$ while its 2-morphisms-spaces are the ones of $\mathcal{WT}$ modded out like in the last paragraph. One should notice that a $(w_b,w_t)$-foam can always be deformed into a $(\mathrm{tr}(\bar{w_b}w_t),\emptyset)$-foam and vice-versa. Khovanov's results restated in this language give that if $w_b$ and $w_t$ are $(\epsilon_0,\epsilon_1)$-web-tangles, the graded dimension of $\hom_{\mathbb{WT}}(w_t,w_b)$ is given by $\kup{\mathrm{tr}(\bar{w_b}w_t)}\cdot q^{l(\epsilon_0)+l(\epsilon_1)}$. Note that when $\epsilon_1=\emptyset$, there is no need to take the closure, because $w_b\bar{w_t}$ is already a closed web. The shift by $l(\epsilon_0)+l(\epsilon_1)$ comes from the fact that $\chi(\mathrm{tr}(\bar{w_b}w_t)) = \chi(w_t)+\chi(w_b) - (l(\epsilon_0)+l(\epsilon_1))$.

\begin{prop}\label{prop:relFR}
We consider the set \FR{} of local relations which consists of:
\begin{itemize}
\item the surgery relation,
\item the evaluations of the dotted spheres and of the dotted theta-foams,
\item the square relations and the digon relations (see figure~\ref{fig:localrel}).
\end{itemize}
We call them \emph{foam relations} or relations \FR, then for any closed web $w$ $\F(w)$ is isomorphic to $\hom_{\mathcal{WT}}(\emptyset,w)$ modded out by \FR.
\end{prop}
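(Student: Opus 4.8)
The plan is to construct, for every closed web $w$, a surjection $\psi\colon\hom_{\mathcal{WT}}(\emptyset,w)/\FR{}\twoheadrightarrow\F(w)$, and then to bound $\dim_q(\hom_{\mathcal{WT}}(\emptyset,w)/\FR{})$ from above by $\dim_q\F(w)=\kup{w}$ (the graded dimension computed by Khovanov); these two facts together force $\psi$ to be an isomorphism. The surjection is essentially built in: $\F(w)$ is by construction the $\QQ$-module of $(w,\emptyset)$-foams --- that is, $\hom_{\mathcal{WT}}(\emptyset,w)$ --- modulo the kernel of the pairing $(f,g)\mapsto\F(\bar{f}g)$, so I only have to check that each relation of \FR{} already holds in $\F(w)$. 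This is immediate: the surgery relation is Khovanov's surgery formula (figure~\ref{fig:surg}); the dotted-sphere and dotted-theta evaluations are the very definition of the numerical invariant $\F$; and the digon and square relations are among the local relations Khovanov established (figure~\ref{fig:localrel}).

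For the dimension bound I would induct on the number of vertices of $w$, with a secondary induction on the number of vertex-less circles. By Proposition~\ref{prop:closed2elliptic}, $w$ contains a circle, a digon or a square --- and a closed web with none of these is empty. Base case $w=\emptyset$: a $(\emptyset,\emptyset)$-foam is a closed pre-foam, and Khovanov's evaluation algorithm (apply the surgery relation repeatedly, then read off the values of dotted spheres and dotted theta-foams) shows that modulo \FR{} every closed foam equals a scalar, whence $\hom_{\mathcal{WT}}(\emptyset,\emptyset)/\FR{}\cong\QQ=\F(\emptyset)$. Circle case: if $w=C\sqcup w'$ with $C$ a vertex-less circle, then neck-cutting near $C$ (a consequence of the surgery relation) together with the dotted-sphere evaluations reduces every $(w,\emptyset)$-foam, modulo \FR{}, to a $\QQ$-linear combination of $1,X,X^2$ (a dot on $C$) times foams meeting only $w'$; hence $\dim_q(\hom_{\mathcal{WT}}(\emptyset,w)/\FR{})\le[3]\cdot\dim_q(\hom_{\mathcal{WT}}(\emptyset,w')/\FR{})$, matching $\kup{w}=[3]\kup{w'}$.

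The digon and square cases are where the named relations of \FR{} enter. If $w$ has a digon and $w'$ is obtained from $w$ by collapsing it to a single edge (so $w'$ has two fewer vertices), then the digon relation of figure~\ref{fig:localrel} is exactly the assertion that an explicit zip/unzip pair of foams, one of them dotted, realises
\[
\hom_{\mathcal{WT}}(\emptyset,w)/\FR{}\;\cong\;(\hom_{\mathcal{WT}}(\emptyset,w')/\FR{})\{1\}\ \oplus\ (\hom_{\mathcal{WT}}(\emptyset,w')/\FR{})\{-1\};
\]
the orthogonality and completeness of the corresponding idempotents follow from the bubble relations, which in turn reduce to the surgery relation and the dotted evaluations. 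By induction this gives the bound $[2]\kup{w'}=\kup{w}$. The square case is identical with the square relation of figure~\ref{fig:localrel}: for the two web resolutions $w_1,w_2$ (each with two fewer vertices), it provides foams realising $\hom_{\mathcal{WT}}(\emptyset,w)/\FR{}\cong\hom_{\mathcal{WT}}(\emptyset,w_1)/\FR{}\oplus\hom_{\mathcal{WT}}(\emptyset,w_2)/\FR{}$, the idempotent relations again reducing to \FR{}, so induction yields $\kup{w_1}+\kup{w_2}=\kup{w}$. Assembling the cases closes the induction with $\dim_q(\hom_{\mathcal{WT}}(\emptyset,w)/\FR{})\le\kup{w}$, and with $\psi$ this proves the proposition.

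The main obstacle is that \FR{}, as listed, contains neither the bubble and bamboo relations nor the dots-migration relations of figure~\ref{fig:localrel}, while the argument above leans on them --- both to evaluate closed foams in the base case and to verify the idempotent decompositions attached to the digon and the square. So the substantive part of the proof is a finite, purely local check that those relations are consequences of the surgery relation together with the dotted-sphere and dotted-theta evaluations: in each configuration one applies the surgery formula to split off closed spheres and theta-foams and then evaluates. In addition one must keep exact track of grading shifts, so that the digon decomposition carries the shifts $\{1\},\{-1\}$ and the square decomposition is an unshifted sum, in agreement with the defining local relations for $\F$; this is what makes the graded-dimension bound come out equal to $\kup{w}$ rather than merely controlling the total rank.
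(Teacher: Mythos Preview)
The paper does not supply its own proof of this proposition: it is stated as a direct consequence of Khovanov's construction in \cite{MR2100691}, and the exposition preceding it (the surgery formula, the sphere and theta evaluations, and the local relations of figure~\ref{fig:localrel}) is meant to make the statement self-evident. So there is no ``paper's proof'' to compare against beyond that implicit reference.

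Your argument is correct and is precisely the standard one extractable from Khovanov's paper: a surjection $\hom_{\mathcal{WT}}(\emptyset,w)/\FR{}\twoheadrightarrow\F(w)$ combined with a graded-dimension upper bound obtained by inductively simplifying $w$ via circle/digon/square removals. Two minor remarks. First, for the inductive dimension bound you only need \emph{surjections} at each step, not the full direct-sum decompositions; the digon and square relations in \FR{} already express the identity foam on $w$ as a sum of foams factoring through the simpler web(s), which immediately yields the surjection---so the bubble and bamboo relations are not actually required for the bound, only for upgrading the surjections to isomorphisms (which your global surjection-plus-dimension argument does anyway). Second, your base case is exactly Khovanov's evaluation procedure: iterated surgery reduces any closed pre-foam to a disjoint union of dotted spheres and dotted theta-foams, all of which are covered by \FR{}. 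The observation in your last paragraph---that the remaining local relations of figure~\ref{fig:localrel} are formal consequences of \FR{}---is correct and is the routine local computation one performs; it is good that you flagged it, but it does not affect the validity of the strategy.
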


\subsection{The algebras $K^\epsilon$ and the web-modules}
\label{sec:agebras-kepsilon-web}
\subsubsection{The graded version}
\label{sec:Kegraded}

We want to extend the Khovanov TQFT to the 0-dimensional objects \ie to build a 2-functor from the 2-category $\mathcal{WT}$ to the 2-category of algebras. We follow the methodology of \cite{MR1928174} and we start by defining the image of the $0$-objects: they are the algebras $K^\epsilon$. This can be compared with \cite{2012arXiv1206.2118M}.
\begin{dfn}
  Let $\epsilon$ be an admissible finite sequence of signs. We define $\tilde{K}^\epsilon$ to be the full sub-category of $\hom_{\mathbb{WT}}(\emptyset,\epsilon)$ whose objects are element of $\NE(\epsilon)$ (see theorem~\ref{thm:Kuperberg}). This is a graded $\QQ$-algebroid. We recall that a $k$-algebroid is just a $k$-linear category. This can be seen as an algebra by setting:

\[K^\epsilon = \bigoplus_{(w_b,w_t)\in \NE(\epsilon)^2} \hom_{\mathbb{WT}}(w_b,w_t)\]

and the multiplication on $K^\epsilon$ is given by the composition of morphisms in $\tilde{K}_\epsilon$ whenever it's possible and by zero when it's not. We will denote $\tensor[_{w_1}]{K}{^{\epsilon}_{w_2}}\eqdef \hom_{\mathbb{WT}}(w_2,w_1)$. This is a unitary algebra because of theorem~\ref{thm:Kuperberg}. The unite element is $\sum_{w\in \NE(\epsilon)} 1_w$. Suppose $\epsilon$ is fixed, for $w$ a non-elliptic $\epsilon$-web, we define $P_w$ to be the left $K^\epsilon$-module:
\[
P_w=\bigoplus_{w'\in \NE(\epsilon)}\hom_{\mathbb{WT}}(w,w') = \bigoplus_{w'\in \NE(\epsilon)} \tensor[_{w'}]{K}{^\epsilon_{w}}.
\]The structure of module is given by composition on the left.
\end{dfn}
\begin{notation}
The algebra $K^\epsilon$ is defined over $\QQ$. In this paper we only use this version, but it could have been defined over $\ZZ$ or over any field $k$. When we want to focus that we work over $\QQ$ we write $K_\QQ^\epsilon$.
\end{notation}
For a given $\epsilon$, the modules $P_w$ are all projective and we have the following decomposition in the category of left $K^\epsilon$-modules:
\[
K^\epsilon\simeq\bigoplus_{w\in \NE(\epsilon)} P_w.
\]
\begin{prop}\label{prop:kupcategorified}
Let $\epsilon$ be an admissible sequence of signs, and $w_1$ and $w_2$ two non-elliptic $\epsilon$-webs, then the graded dimension of $\hom_{K^\epsilon}(P_{w_1}, P_{w_2})$ is given by $\kup{(\bar{w_1}w_2)}\cdot q^{l(\epsilon)}$.
\end{prop}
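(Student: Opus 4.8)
The plan is to reduce the graded dimension of $\hom_{K^\epsilon}(P_{w_1},P_{w_2})$ to a quantity that has already been recorded, namely the graded dimension of a $2$-morphism space in $\mathbb{WT}$. Write $1_w$ for the identity foam on a non-elliptic $\epsilon$-web $w$; it is homogeneous of degree $0$ since $\chi(w\times[0,1])=\chi(w)$. By Theorem~\ref{thm:Kuperberg} the family $(1_w)_{w\in\NE(\epsilon)}$ is a finite complete system of orthogonal idempotents of $K^\epsilon$. The first step is to identify, as graded left $K^\epsilon$-modules, $P_w = K^\epsilon 1_w$: writing $K^\epsilon=\bigoplus_{(w_b,w_t)}\tensor[_{w_b}]{K}{^\epsilon_{w_t}}$, right multiplication by $1_w$ annihilates every summand with $w_t\neq w$ and is the identity on the summands with $w_t=w$, so $K^\epsilon 1_w=\bigoplus_{w_b}\tensor[_{w_b}]{K}{^\epsilon_w}=P_w$ by the very definition of $P_w$.

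The second step is the standard isomorphism $\hom_A(Ae,N)\cong eN$ valid for a left module $N$ over a ring $A$ and an idempotent $e\in A$, realised by evaluation $\varphi\mapsto\varphi(e)$ (well-definedness uses $e^2=e$). In the graded setting this is an isomorphism of graded vector spaces, and it is degree-preserving because $1_{w_1}$ is homogeneous of degree $0$. Taking $A=K^\epsilon$, $e=1_{w_1}$ and $N=P_{w_2}=K^\epsilon 1_{w_2}$ gives a graded isomorphism
\[
\hom_{K^\epsilon}(P_{w_1},P_{w_2})\;\cong\;1_{w_1}K^\epsilon 1_{w_2}.
\]
Unwinding the product of $K^\epsilon$ (which is composition of foams), the right-hand side is exactly the summand $\tensor[_{w_1}]{K}{^\epsilon_{w_2}}=\hom_{\mathbb{WT}}(w_2,w_1)$.

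The third step is to quote the restatement of Khovanov's dimension count made just before Proposition~\ref{prop:relFR}: for $(\epsilon_0,\epsilon_1)$-web-tangles $w_b$ and $w_t$, the graded dimension of $\hom_{\mathbb{WT}}(w_t,w_b)$ equals $\kup{\mathrm{tr}(\bar{w_b}w_t)}\cdot q^{l(\epsilon_0)+l(\epsilon_1)}$, and when $\epsilon_1=\emptyset$ no closure is needed since $\bar{w_b}w_t$ is already a closed web. Here $w_1$ and $w_2$ are $(\epsilon,\emptyset)$-web-tangles, so applying this with $w_b=w_1$, $w_t=w_2$, $\epsilon_0=\epsilon$ and $\epsilon_1=\emptyset$ yields graded dimension $\kup{\bar{w_1}w_2}\cdot q^{l(\epsilon)}$, which is the claim. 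I do not expect a genuine obstacle here; the two points needing care are that the idempotent isomorphism introduces no spurious $q$-shift (it does not, since $1_{w_1}$ has degree $0$) and that one keeps track of which side the module structure acts on, so that $1_{w_1}K^\epsilon 1_{w_2}$ really is $\hom_{\mathbb{WT}}(w_2,w_1)$ rather than $\hom_{\mathbb{WT}}(w_1,w_2)$ — the two have the same graded dimension anyway via the mirror symmetry $f\mapsto\bar f$, but it is cleanest to track the conventions directly.
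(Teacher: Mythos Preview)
Your proof is correct and follows essentially the same approach as the paper's own proof: both identify $\hom_{K^\epsilon}(P_{w_1},P_{w_2})$ with $1_{w_1}P_{w_2}=\tensor[_{w_1}]{K}{^\epsilon_{w_2}}=\hom_{\mathbb{WT}}(w_2,w_1)$ via evaluation at $1_{w_1}$, and then invoke the graded dimension formula for $\hom_{\mathbb{WT}}$ stated just before Proposition~\ref{prop:relFR}. Your write-up is simply more explicit about the idempotent argument and the degree-$0$ issue than the paper's two-line proof.
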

\begin{proof}
  An element of  $\hom_{K^\epsilon}(P_{w_1}, P_{w_2})$ is completely determined by the image of $1_{w_1}$ and this element can be sent on any element of $\hom_{\mathbb{WT}}(P_{w_2}, P_{w_1})$, and $\dim_q(\hom_{\mathbb{WT}}(P_{w_1}, P_{w_2}))=\kup{(\bar{w_1}w_2)}\cdot q^{l(\epsilon)}$.
\end{proof}
\begin{req}
  The way we constructed the algebra $K^\epsilon$ is very similar to the construction of $H^n$ in \cite{MR1928174}. Using the same method we can finish the construction of a $0+1+1$ TQFT:
  \begin{itemize}
  \item If $\epsilon$ is an admissible sequence of signs, then $\F(\epsilon) = K^\epsilon$.
  \item If $w$ is a $(\epsilon_1,\epsilon_2)$-web-tangle with $\epsilon_1$ and $\epsilon_2$ admissible, then 
\[\F(w) = \bigoplus_{\substack{u\in \NE(\epsilon_1}) \\ v\in \NE(\epsilon_2)} \F(\bar{u}wv),
\] and it has a structure of graded $K^{\epsilon_1}$-module-$K^{\epsilon_2}$. Note that if $w$ is a non-elliptic $\epsilon$-web, then $\F(w)=P_w$.
\item If $w$ and $w'$ are two $(\epsilon_1,\epsilon_2)$-web-tangles, and $f$ is a $(w,w')$-foam, then we set 
\[
\F(f) = \sum_{\substack{u\in \NE({\epsilon_1}) \\ v\in \NE({\epsilon_2})}} \F(\tensor[_{\bar{u}}]{f}{_v}),
\] where $\tensor[_{\bar{u}}]{f}{_v}$ is the foam $f$ with glued on its sides $\bar{u}\times[0,1]$ and $v\times [0,1]$. This is a map of  graded $K^{\epsilon_1}$-modules-$K^{\epsilon_2}$.
  \end{itemize}
We encourage the reader to have a look at this beautiful construction for the $\sll_2$ case in \cite{MR1928174}. 
\end{req}

In the $\sll_2$ case, the classification of projective indecomposable modules is fairly easy, and a analogous result, would state in our context that the projective indecomposable modules are exactly the modules associated with non-elliptic webs. However we have:

\begin{prop}[\cite{MR2457839}, see \cite{LHRThese} for details]\label{prop:Pwdec}
  Let $\epsilon$ be the sequence of signs:  $(+,-,-,+,+,-,-,+,+,-,-,+)$, and let $w$ and $w_0$ be the two $\epsilon$-webs given by figure~\ref{fig:thewebw}. Then the web-module $P_w$ is decomposable and admits $P_{w_0}$ as a direct factor.
\end{prop}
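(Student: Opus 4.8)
The plan is to reduce the assertion to an explicit computation in the foam 2-category. For non-elliptic $\epsilon$-webs $u,v$, recall the identification $\hom_{K^\epsilon}(P_u,P_v)\cong{}_uK^\epsilon_v=\hom_{\mathbb{WT}}(v,u)$ underlying Proposition~\ref{prop:kupcategorified}: a foam $g\colon v\to u$ corresponds to the module map $f_g\colon P_u\to P_v$, $\xi\mapsto\xi\circ g$, and $f_h\circ f_g=f_{g\circ h}$, all compatibly with the grading. Hence, in order to exhibit $P_{w_0}$ as a direct factor of $P_w$, it is enough to construct homogeneous foams
\[a\in\hom_{\mathbb{WT}}(w,w_0)={}_{w_0}K^\epsilon_w\qquad\text{and}\qquad b\in\hom_{\mathbb{WT}}(w_0,w)={}_wK^\epsilon_{w_0}\]
with $a\circ b=1_{w_0}$. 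Such $a,b$ are then automatically of opposite degrees, $e\eqdef b\circ a\in{}_wK^\epsilon_w=\hom_{K^\epsilon}(P_w,P_w)$ is a degree-$0$ idempotent, the maps $f_a$ and $f_b$ restrict to mutually inverse isomorphisms between $P_{w_0}$ (up to a grading shift) and the summand $\Im(f_a\circ f_b)$ of $P_w$, and $P_w$ splits as this summand plus $\Im(f_{1_w}-f_a\circ f_b)$.

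For the construction, observe that the webs $w$ and $w_0$ of Figure~\ref{fig:thewebw} agree outside a disk $D$, inside which $w$ carries the more intricate local pattern and $w_0$ the simpler one. I would take $b$ to be a product of vertical facets outside $D\times[0,1]$, and inside the natural concatenation of elementary foam generators (zips and unzips, a singular saddle, digon cups and caps, decorated by suitable dots) carrying $w_0$ to $w$; and $a=\bar b$, its mirror foam, running from $w$ to $w_0$. As is typical in such computations, one will most likely have to replace $a$ by an explicit $\QQ$-linear combination of such dotted foams, the coefficients being forced so as to cancel the error terms produced below.

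The crux is then to establish $a\circ b=1_{w_0}$ in $\hom_{\mathbb{WT}}(w_0,w_0)$. By Proposition~\ref{prop:relFR} this space is $\hom_{\mathcal{WT}}(\emptyset,\bar{w_0}w_0)$ modulo the relations \FR{}, so one reduces $a\circ b$ to a scalar multiple of $1_{w_0}$ by iterating the bamboo, digon and square relations (to kill bigons and squares), the surgery relation (to neck-cut along compressing circles), and the evaluations of the closed facets and of the resulting dotted theta-foams. This is a finite but delicate diagrammatic computation; pinning down the correct combination $a$ and checking that the remaining scalar is a unit --- so that $a$ and $b$ can be rescaled to make $a\circ b=1_{w_0}$ hold on the nose --- is the step I expect to be the main obstacle. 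It is the computation carried out in \cite{MR2457839}, with full details in \cite{LHRThese}.

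It remains to see that $P_w$ is genuinely decomposable, \ie that $\Im(f_{1_w}-f_a\circ f_b)\neq0$. If it were zero, $f_a$ and $f_b$ would be mutually inverse and $P_w$ would be isomorphic to $P_{w_0}$ up to a grading shift; since $\phi$ is injective (as recalled in the introduction) and $w\neq w_0$ are distinct elements of the web basis of Theorem~\ref{thm:Kuperberg}, this is impossible. (Alternatively, one checks by a direct finite computation, via Propositions~\ref{prop:skein-relation} and~\ref{prop:closed2elliptic}, that $\kup{\bar w w}\neq\kup{\bar{w_0}w_0}$, whence already $\hom_{K^\epsilon}(P_w,P_w)\not\cong\hom_{K^\epsilon}(P_{w_0},P_{w_0})$ as graded spaces.) Thus $P_w\cong P_{w_0}\{k\}\oplus Q$ for some shift $k$ and some nonzero projective $Q$.
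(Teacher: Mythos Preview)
The paper does not actually prove this proposition: it is stated with attributions to \cite{MR2457839} and \cite{LHRThese} and no argument is given in the text. Your outline is the correct strategy and is precisely the one used in those references --- produce foams $a,b$ between $w$ and $w_0$ with $a\circ b=1_{w_0}$, so that $b\circ a$ is a nontrivial idempotent in $\mathrm{End}(P_w)$ --- and you rightly flag that the substance lies entirely in the explicit foam computation establishing $a\circ b=1_{w_0}$, which you (like the paper) defer to the cited sources. Your additional remark that the resulting splitting is nontrivial, via injectivity of $\phi$ or via $\kup{\bar w w}\neq\kup{\bar{w_0}w_0}$, is a nice complement not spelled out in the paper.
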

\begin{figure}[ht]
  \centering
\begin{tikzpicture}[yscale= 0.4, xscale=0.4]
  \input{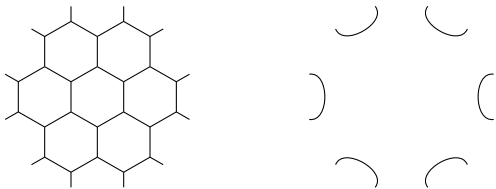}
\end{tikzpicture}
  \caption{The $\epsilon$-webs $w$ (on the left) and $w_0$ (on the right).}
  \label{fig:thewebw}
\end{figure}

\subsubsection{The filtered version}
\label{sec:filtered-version}

The reader interested in this filtered version is advised to have a look at \cite{LewarkThese}, \cite{MR2336253} and \cite{2004math2266G}. 
The filtered version of Khovanov's TQFT is a deformation of the previous one obtained by replacing the Frobenius algebra $\ZZ[X]/(X^3)$ by $\ZZ[X]/(X^3-1)$ which is no longer graded but is filtered. It can be constructed by the universal construction mentioned before. The only thing to change is the evaluation of closed foams. As before, we only need to give is the surgery relation (it is the same as before) and the evaluation of the dotted spheres and theta foams. There are the same expect that the number of dots should be read modulo 3. 
This construction yields a functor $\F'$ from the category of webs to the category of filtered vector space. One can extend it exactly in the same way as we did for $\F$ to a 2-functor from the 2-category of web-tangles to the 2-category of filtered $\QQ$-algebras. We denote by $G^\epsilon$ the filtered algebra $\F'(\epsilon)$ and by $G^\epsilon_\CC$ the same filtered algebra with scalar extended to $\CC$. The $G^\epsilon$-module associated with an $\epsilon$-web $w$ is denoted by $Q_w$ (we keep the same notation for its analogue in $G^\epsilon_\CC-\mathsf{proj}$)

\begin{thm}[{\cite{MR2336253}, \cite[Theorem 1]{2004math2266G} }]
We have the following isomorphism of 2-functor:
\[E \circ \F_\QQ' \simeq \F_\QQ, \]
where $E$ stands for the expected 2-functor from the 2-category of filtered $\QQ$-algebras to the 2-category of graded $\QQ$-algebras.
We have in particular:
\[
E(G^\epsilon) = K^\epsilon \quad \textrm{and} \quad E(Q_w)=P_w.
\]
\end{thm}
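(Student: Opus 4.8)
The plan is to reduce the statement, web by web, to a comparison of the graded foam evaluation $\langle\,\cdot\,\rangle$ underlying $\F$ with its ``dots modulo $3$'' filtered deformation $\langle\,\cdot\,\rangle'$ underlying $\F'$, and to recognise the latter as a filtered perturbation of the former whose leading (lowest-degree) part is exactly $\langle\,\cdot\,\rangle$. First I would reduce to closed webs: the algebras $G^\epsilon=\F'(\epsilon)$, $K^\epsilon=\F(\epsilon)$ and the modules $Q_w=\F'(w)$, $P_w=\F(w)$ are by construction finite direct sums of the spaces $\F'(\bar{w_b}w_t)$, resp.\ $\F(\bar{w_b}w_t)$, attached to closed webs $\bar{w_b}w_t$ with $w_b,w_t\in\NE(\epsilon)$, and the associated graded functor $E$ commutes with finite direct sums; so it is enough to produce, for every closed web $w$, a natural isomorphism of graded $\QQ$-vector spaces $E\bigl(\F'(w)\bigr)\simeq\F(w)$ and then to check that these isomorphisms intertwine the algebra multiplications, module actions and foam-induced maps. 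That last check is formal: each such structure map is ``glue a fixed foam, then evaluate'', foam degree is additive under gluing, so these maps are filtered, and --- granted the leading-term statement below --- their associated gradeds are the corresponding structure maps of $\F$; once the object-level isomorphisms are known, $2$-functoriality is a diagram chase.

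To set up the comparison, note that both $\F(w)$ and $\F'(w)$ are presented as the quotient of the graded $\QQ$-module $\mathbb{V}_w$ spanned by isotopy classes of $w$-foams (graded by foam degree) by the radical of a bilinear form: $B(f,g)=\langle\bar f g\rangle$ for $\F(w)$, which is homogeneous of total degree $0$, and $B'(f,g)=\langle\bar f g\rangle'$ for $\F'(w)$. The surgery (neck-cutting) relation is degree-homogeneous and common to both theories; the two differ only in the evaluation of dotted spheres and dotted theta-foams, where $\F'$ reads the number of dots modulo $3$. A short bounded-degree bookkeeping then shows that on the locus $\deg f+\deg g=0$ one has $B'(f,g)=B(f,g)$, while the extra contributions to $B'$ occur only in total degrees that are positive multiples of $6$ (a sphere with $k$ dots is evaluated non-trivially in $\F'$ exactly when $k\equiv 2\ (\mathrm{mod}\ 3)$, and then its degree $2k-4$ lies in $\{0,6,12,\dots\}$; similarly for theta-foams). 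Hence $B'=B+(\text{strictly higher-degree terms})$. Moreover the relation $X^3=1$ lets one rewrite any $w$-foam of large degree by foams of bounded degree inside $\F'(w)$, so $E(\F'(w))=\mathrm{gr}\,\F'(w)$ is finite-dimensional and supported in the same degree range as $\F(w)$.

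The formal part of the deformation argument now yields one inequality: a matrix cannot lose rank under a filtered perturbation that keeps its leading part, so $\dim_\QQ\F'(w)=\operatorname{rank}B'\ge\operatorname{rank}B=\dim_\QQ\F(w)$, and a filtered normal-form argument (the ``leading minor'' of $B'$ reproduces $B$) yields a canonical graded comparison between $\mathrm{gr}\,\F'(w)$ and $\F(w)$. To upgrade this to an isomorphism one needs the reverse inequality $\dim_\QQ\F'(w)\le\dim_\QQ\F(w)=\#\{\text{colorings of }w\}$: this is exactly the content of Gornik's computation, namely that the deformed theory $\F'$ carries, for each closed web $w$, a basis indexed by the colorings of $w$, built from ``colored'' foams using the decomposition of $X$ acting on $\ZZ[X]/(X^3-1)$ into its three eigenspaces (see \cite[Theorem 1]{2004math2266G} and \cite{MR2336253}). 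Equal dimensions together with the matching leading part pin down $E(\F'(w))\simeq\F(w)$; feeding this back into the first step gives $E\circ\F_\QQ'\simeq\F_\QQ$, and in particular $E(G^\epsilon)=K^\epsilon$ and $E(Q_w)=P_w$.

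The main obstacle is precisely this last upper bound $\dim_\QQ\F'(w)\le\#\{\text{colorings of }w\}$ on the size of the deformed theory: the reduction to closed webs, the leading-term identity $B'=B+(\text{higher})$, the rank inequality and the normal-form argument are all soft, but bounding the deformed theory from above is genuinely non-formal and requires Gornik's construction of the coloring-indexed basis.
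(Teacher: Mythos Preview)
The paper does not prove this theorem; it is quoted from Mackaay--Vaz and Gornik and used as a black box. So there is no in-paper argument to compare against, and your task was really to reconstruct a proof of the cited result.

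Your reduction to closed webs and the analysis of the two evaluations are correct: for a closed foam $F$ one has $\langle F\rangle'=\langle F\rangle$ when $\deg F=0$ and $\langle F\rangle'=0$ when $\deg F<0$, because after surgery one is left with disjoint unions of dotted spheres and theta-foams, each of which has non-negative degree whenever its deformed evaluation is non-zero. This is exactly the ``$B'=B+(\text{higher})$'' statement you need. Your appeal to Gornik's coloring basis for the upper bound $\dim_\QQ\F'(w)\le\#\col(w)$ is also the right move and is the genuinely non-formal input.

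There is one soft spot. The sentence ``equal dimensions together with the matching leading part pin down $E(\F'(w))\simeq\F(w)$'' is not yet a proof: you have not constructed a map in either direction. The obvious candidate, sending the class of $v\in(\mathbb V_w)_{\ge n}$ in $\mathrm{gr}^n\F'(w)$ to the class of its degree-$n$ component in $\F(w)_n$, is \emph{not} obviously well-defined, because an element $k\in\ker B'$ need not have its lowest homogeneous component in $\ker B$ (the higher-degree pieces of $k$ can contribute to $B'(k,-)$ and cancel the contribution of the lowest piece). Likewise the rank/leading-minor argument, as stated, lives on the infinite-dimensional space $\mathbb V_w$, which is unbounded in both directions, so the usual determinant trick does not apply without first cutting down to a suitable finite-dimensional graded subspace and then arguing that this subspace also computes the filtration on $\F'(w)$ --- and that last point is precisely the one in question.

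The clean fix, and the route the cited references take, is to bypass the bilinear-form comparison and instead use that $\F'$ satisfies the circle/digon/square relations of Proposition~\ref{prop:relFR} up to strictly higher filtration (the extra terms come from the ``dots mod $3$'' evaluations and all have positive degree). One then reduces $w$ to the empty web by Proposition~\ref{prop:closed2elliptic}; at each step the filtered decomposition of $\F'(w)$ has associated graded equal to the graded decomposition of $\F(w)$, and the isomorphism $E(\F'(w))\simeq\F(w)$ follows by induction. This gives the map you were missing and makes the $2$-functoriality check genuinely formal.
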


To really take advantage of this construction it is important to be able to decompose the polynomial $X^3-1$, this is why will work over $\CC$ though this will have no influence on the ground field in our main result. When forgetting the filtration, $\CC[X]/(X^3-1)$ is isomorphic to $\CC^3$, via the following three mutually orthogonal idempotent elements:
\begin{align*}
  a &= \frac13 (1+X+X^2), \\
  b &= \frac13 (1+jX+j^2X^2), \\
  c &= \frac13 (1+j^2X+jX^2),
\end{align*}
where $j$ is equal to $e^{\frac{2i\pi}3}$. Given an element of $\F_\CC'(w)$, it can be seen as a linear combination of foams. As we have $X^3=1$ we may assume that there are less than 3 dots on each facet, hence we can express any element of $\F_\CC'(w)$ as a linear combination of foam with facets colored by $a$, $b$ and $c$.  One can write some new foam relation \FRp{} for this \emph{colored foams} see figure \ref{fig:FRfilt}. Note in particular the last ones saying that if two adjacent faces have the same color, then the corresponding element in $\F'_\CC(w)$ is equal to 0. A colored foam such that any two adjacent faces have different color is said to be \emph{well-colored}.

\begin{figure}[ht]
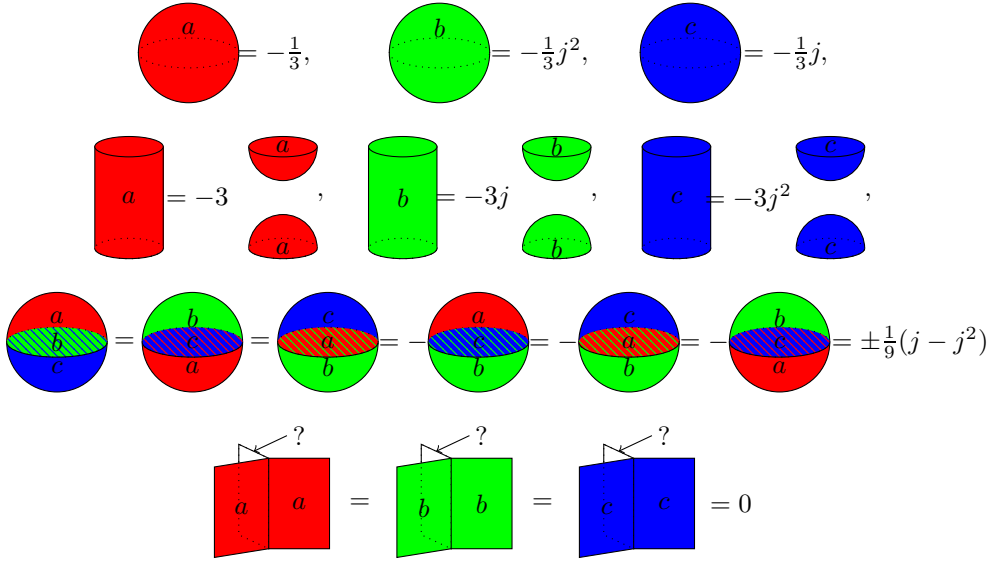

  \centering
  \begin{tikzpicture}[scale= 0.66]
    \input{\imagesfolder/gg_relcoloredfoamsphere}
  \end{tikzpicture} 
\\ \vspace{0.3cm}
  \begin{tikzpicture}[scale= 0.45]
    \input{\imagesfolder/gg_relcoloredfoamtube}
  \end{tikzpicture} 
\\ \vspace{0.3cm}
  \begin{tikzpicture}[scale= 0.66]
    \input{\imagesfolder/gg_relcoloredfoamtheta}
  \end{tikzpicture} 
\\ \vspace{0.3cm}
  \begin{tikzpicture}[yscale=0.6, xscale= 0.8]
    \input{\imagesfolder/gg_relcoloredfoamdoublecol}
  \end{tikzpicture}
  \caption{Relations \FRp{} of colored foams in the filtered context. The question mark indicate that the relations hold whatever the color of the indicated facet is.}
  \label{fig:FRfilt}
\end{figure}

\begin{dfn}
  Let $w_c$ be a colored web,  we denote by $\F_\CC'(w_c)$ the sub-vector space of $\F'_\CC(w)$ spanned by colored foam whose coloring restrict to the coloring $c$ via the correspondence $a\mapsto -1$, $b\mapsto 0$, $c\mapsto 1, $ 
\end{dfn}
 
Rephrasing what was said above we have:

\begin{lem}\label{lem:decincolor}
  Forgetting the filtration, we have the following isomorphism of $\CC$-vector space:
\[ \F'_\CC(w) \simeq \bigoplus_{c\in \col(w)}\F'(w_c).\]
\end{lem}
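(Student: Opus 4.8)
The statement is essentially a repackaging of the discussion above, the only substantial point being the directness of the sum, so I would argue as follows. First I would recall that, after coloring every facet by $1=a+b+c$ and expanding, any element of $\F'_\CC(w)$ becomes a $\CC$-linear combination of colored $w$-foams, and that by the last relations of \FRp{} (figure~\ref{fig:FRfilt}) the colored foams having two adjacent facets of the same color vanish; hence $\F'_\CC(w)$ is spanned by well-colored $w$-foams. A well-colored $w$-foam $f$ assigns to each edge $e$ of $\partial f=w$ the color of the unique facet of $f$ it bounds; as the three facets incident to a boundary vertex of $f$ are pairwise adjacent along a singular arc, they carry three distinct colors, so this assignment is a genuine coloring $c_f\in\col(w)$ under $a\mapsto -1$, $b\mapsto 0$, $c\mapsto 1$. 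Sorting the well-colored spanning foams according to the value of $c_f$ already gives $\F'_\CC(w)=\sum_{c\in\col(w)}\F'(w_c)$.

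To upgrade this to a direct sum I would produce a complete system of orthogonal idempotents indexed by $\col(w)$. For $c\in\col(w)$ let $\id_{w_c}$ be the identity foam $w\times[0,1]$ with the facet $e\times[0,1]$ colored by $c(e)$; it is well-colored precisely because $c$ is a coloring. Gluing $\id_{w_c}$ and $\id_{w_{c'}}$ along $w$ merges, over each edge $e$, the facets colored $c(e)$ and $c'(e)$ into a single facet, which by \FRp{} carries the product of those two idempotents; this product is $0$ when they differ and is the same idempotent when they agree, so $\id_{w_c}\id_{w_{c'}}=\delta_{c,c'}\,\id_{w_c}$. Coloring instead every facet of the plain identity foam $\id_w$ by $1=a+b+c$, expanding, and discarding by \FRp{} the terms having an adjacent repetition of colors, what survives is indexed exactly by $\col(w)$, hence $\sum_{c\in\col(w)}\id_{w_c}=\id_w$, which acts as the identity on $\F'_\CC(w)$. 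Since $\id_{w_c}$ fixes every well-colored foam whose boundary coloring is $c$ and annihilates the others, $\id_{w_c}\cdot\F'_\CC(w)=\F'(w_c)$, and a complete system of orthogonal idempotents then splits $\F'_\CC(w)=\bigoplus_{c\in\col(w)}\id_{w_c}\cdot\F'_\CC(w)=\bigoplus_{c\in\col(w)}\F'(w_c)$.

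The point I expect to require the most care is the ``merging'' principle used above: one has to check that \FRp{} really forces a facet which is colored $x$ on one side of an interior arc and $y$ on the other to represent $xy$ — in particular $0$ when $x\neq y$ in $\{a,b,c\}$ — and, relatedly, that being well-colored automatically makes all three colors occur around every singular circle (which is immediate, those three facets being pairwise adjacent). Once this is in place, everything else is formal. As an alternative to the idempotent argument one can prove directness directly: for $f\in\F'(w_c)$ and $g\in\F'(w_{c'})$ with $c\neq c'$ the closed foam $\overline{f}g$ has, over an edge where $c$ and $c'$ differ, a facet carrying the zero idempotent, so $\F'(\overline{f}g)=0$; thus the subspaces $\F'(w_c)$ are pairwise orthogonal for the non-degenerate bilinear form through which $\F'_\CC(w)$ is defined, and orthogonality together with spanning and non-degeneracy yields the direct sum.
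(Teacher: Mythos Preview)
Your argument is correct and follows the same line as the paper, which in fact gives no separate proof at all: the lemma is introduced with ``Rephrasing what was said above we have'', referring to the decomposition $1=a+b+c$ into orthogonal idempotents in $\CC[X]/(X^3-1)$ and to the \FRp{} relations killing foams with two adjacent facets of the same color. Your write-up makes explicit the directness step (via the complete system of orthogonal idempotents $\id_{w_c}$, or alternatively via orthogonality for the pairing), which the paper leaves implicit in that discussion; this is a welcome clarification rather than a different approach.
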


\begin{lem}[{\cite[Proposition 2.7]{LewarkThese}}]\label{lem:colodim1}
  Let $w_c$ be a colored closed web, then $\F'_\CC(w_c)$ has dimension 1 and any well-colored $w_c$-foam corresponds to a non-zero element in $\F'_\CC(w_c)$. 
\end{lem}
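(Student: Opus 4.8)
The plan is to combine the counting of colorings with the dimension formula coming from Khovanov's TQFT and Lemma \ref{lem:decincolor}. First I would recall that for a closed web $w$, the graded dimension of $\F(w)$ is $\kup{w}$, hence the ungraded dimension of $\F'_\CC(w)$ (which by the theorem of Gornik equals $\dim \F_\QQ(w)$ after forgetting the filtration and extending scalars) is $\kup{w}$ evaluated at $q=1$, which by the proposition recalled in section \ref{sec:just-enough-about} is exactly $\Card \col(w)$, the number of colorings of $w$. On the other hand, Lemma \ref{lem:decincolor} gives $\F'_\CC(w) \simeq \bigoplus_{c\in \col(w)} \F'_\CC(w_c)$, a direct sum over the $\Card \col(w)$ colorings. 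Comparing dimensions, $\sum_{c\in \col(w)} \dim \F'_\CC(w_c) = \Card\col(w)$, so it suffices to prove that $\dim \F'_\CC(w_c) \ge 1$ for every coloring $c$; equality $\dim \F'_\CC(w_c) = 1$ then follows for all $c$ simultaneously.

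To show $\dim \F'_\CC(w_c)\ge 1$, I would exhibit a well-colored $w_c$-foam $f$ and argue it is non-zero in $\F'_\CC(w)$. The natural candidate is the "product foam" $w_c\times[0,1]$ obtained by thickening the colored web: since $c$ is a genuine coloring, adjacent facets of this foam carry distinct colors among $\{a,b,c\}$, so it is well-colored and in particular not killed by the last relations of \FRp{} (figure \ref{fig:FRfilt}). To see $\F'_\CC(\bar f f)\neq 0$ one evaluates the resulting closed colored foam: using the relations \FRp{} (the colored sphere and colored theta evaluations, together with the colored surgery/neck-cutting relation) one reduces any well-colored closed foam to a product of evaluations of colored spheres and thetas, none of which vanish because the adjacency/color constraints are met; concretely $\bar f f$ can be cut along neck circles into pieces each evaluating to a non-zero scalar in $\CC$. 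Hence the bilinear form $\F'_\CC$ does not annihilate $f$, so $f$ represents a non-zero element of $\F'_\CC(w_c)$, giving the lower bound.

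The main obstacle is the last step: establishing that a well-colored closed foam has non-zero evaluation. One must be careful that the color-forced surgery decomposition really does terminate in spheres and thetas whose particular dottings/colorings land in the non-vanishing cases of figure \ref{fig:thetaeval}/figure \ref{fig:FRfilt}, rather than in one of the configurations sent to $0$; this requires tracking how the idempotents $a,b,c$ interact with the comultiplication under neck-cutting, i.e. that $\Delta$ of an idempotent is supported on pairs of distinct idempotents (which is exactly what the "two adjacent faces of the same color give $0$" relation encodes). Once this bookkeeping is in place, the rest is a dimension count. Finally, the statement "any well-colored $w_c$-foam corresponds to a non-zero element" follows because $\dim\F'_\CC(w_c)=1$ and, by the same evaluation argument, no well-colored foam lies in the kernel of the one-dimensional form.
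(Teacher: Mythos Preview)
The paper does not give its own proof of this lemma; it simply cites \cite[Proposition 2.7]{LewarkThese}. So there is no in-paper argument to compare against, and your plan must be judged on its own merits.

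Your overall strategy is the right one: use the equality $\dim_\CC \F'_\CC(w)=\kup{w}\big|_{q=1}=\Card\col(w)$ together with Lemma~\ref{lem:decincolor} to reduce to $\dim\F'_\CC(w_c)\ge 1$, and obtain that lower bound by producing a well-colored $w_c$-foam whose pairing with its mirror is nonzero. The central computation you identify --- that a well-colored \emph{closed} foam has nonzero evaluation under \FRp{} --- is indeed the crux, and your sketch of how neck-cutting with the idempotents $a,b,c$ and the colored sphere/theta evaluations handle this is on target.

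There is, however, a genuine gap in the foam you propose. The object $w_c\times[0,1]$ is a $(w,w)$-foam (the identity cobordism), not a $(\emptyset,w)$-foam; it is therefore not an element of $\F'_\CC(w)$ at all, and the expression $\bar f f$ for $f=w_c\times[0,1]$ is again $w\times[0,1]$, not a closed foam. What you need instead is a foam bounding $w$ from the empty web. Such a well-colored $w_c$-foam can be built inductively using Proposition~\ref{prop:closed2elliptic}: every closed web contains a circle, digon, or square, so cap it with the corresponding elementary colored foam piece (a disk, a half-digon foam, or a half-square foam), inheriting the colors from $c$; each such piece is well-colored precisely because $c$ is a coloring. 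Iterating reduces $w$ to $\emptyset$ and assembles a well-colored $w_c$-foam $f$. Then $\bar f f$ is a genuine well-colored closed foam, and your evaluation argument applies. With this correction your plan goes through.
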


One can extend this colored version of the TQFT to the 2-functor framework: for example, if $w_c$ is a colored $\epsilon$-web such that $c$ restrict to $c'$ on $\epsilon$, one defines:

\[
\F'_\CC(w_c) \eqdef \bigoplus_{\substack{w'\in \NE(\epsilon) \\ c_1 \in \col_{c'}(w')}} \F'_\CC (\overline{w'_{c_1}} w_c)\simeq
\bigoplus_{\substack{w'\in \NE(\epsilon) \\ c_1 \in \col_{c'}(w')}} \CC
\]

The action of $G^\epsilon_\CC$ is defined as usual by composing foams on the left when it is possible (here the compatibility condition involves the colorings), 0 when it is not.
From the previous discussion, we have:
\begin{prop}\label{prop:decQw}
  Let $w$ be an $\epsilon$-web then, forgetting the filtration, we have the following decomposition of $G^\epsilon_\CC$-modules:
\[
Q_w\simeq \bigoplus_{c\in \col(w)} \F'_{\CC}(w_c).
\]
\end{prop}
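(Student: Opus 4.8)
The plan is to unfold both sides of the claimed isomorphism into colored foam spaces and match them summand by summand, the only genuine input being Lemma~\ref{lem:decincolor}. Recall first that, exactly as $P_w=\bigoplus_{w'\in\NE(\epsilon)}\F(\overline{w'}w)$, the $G^\epsilon_\CC$-module $Q_w$ is by construction $\bigoplus_{w'\in\NE(\epsilon)}\F'_\CC(\overline{w'}w)$, each $\overline{w'}w$ being a \emph{closed} web. The first step is to apply Lemma~\ref{lem:decincolor} to each of these closed webs, which, after forgetting the filtration, gives a $\CC$-linear identification
\[
\F'_\CC(\overline{w'}w)\simeq\bigoplus_{d\in\col(\overline{w'}w)}\F'_\CC\bigl((\overline{w'}w)_d\bigr).
\]

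The combinatorial heart is then to identify the index set. A coloring $d$ of the closed web $\overline{w'}w$ is the same datum as a pair $(c_1,c)$ with $c_1\in\col(w')$ and $c\in\col(w)$ inducing the same coloring $c'$ of the shared boundary sequence $\epsilon$, and for such a $d$ one has $(\overline{w'}w)_d=\overline{w'_{c_1}}w_c$. Substituting this into the display above and reorganizing the direct sum so as to group first by $c\in\col(w)$, I would obtain
\[
Q_w\simeq\bigoplus_{c\in\col(w)}\ \bigoplus_{\substack{w'\in\NE(\epsilon)\\ c_1\in\col_{c'}(w')}}\F'_\CC\bigl(\overline{w'_{c_1}}w_c\bigr),
\]
where $c'$ denotes the coloring of $\epsilon$ induced by $c$; by the very definition of $\F'_\CC(w_c)$ as a module the inner double sum \emph{is} $\F'_\CC(w_c)$. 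This already yields the stated decomposition at the level of $\CC$-vector spaces.

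What would then remain---and this is the step I expect to need the most care---is to promote this $\CC$-linear isomorphism to one of $G^\epsilon_\CC$-modules. The action of $G^\epsilon_\CC$ on $Q_w$ is composition of colored foams on the left (and $0$ when the colorings at the gluing interface disagree); since such a composition only constrains the colors along that interface and leaves the facets incident to the boundary $w$ untouched, the action sends each summand $\F'_\CC(w_c)$ into itself and restricts on it to exactly the action through which $\F'_\CC(w_c)$ was made into a $G^\epsilon_\CC$-module earlier. The rest is bookkeeping, made legitimate by the fact (a consequence of the relations~\FRp{}) that every element of $\F'_\CC$ of a closed web is a linear combination of colored foams and that gluing colored foams along a web annihilates the terms whose colorings there do not match. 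I do not expect a real obstacle: the proposition is essentially a reorganization of Lemma~\ref{lem:decincolor}, and the only thing genuinely requiring attention is the compatibility of that reorganization with the module structure.
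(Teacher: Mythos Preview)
Your proposal is correct and is precisely the argument the paper intends: the paper gives no explicit proof but states the proposition as an immediate consequence of Lemma~\ref{lem:decincolor} together with the definition of $\F'_\CC(w_c)$ for a colored $\epsilon$-web, and your write-up simply unfolds that ``previous discussion'' in detail (decompose each closed web $\overline{w'}w$ by color, reindex the colorings as compatible pairs, regroup by $c\in\col(w)$, and check the left action respects the splitting). There is no difference in approach.
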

\begin{req}
  This does \emph{not} hold in the category of filtered modules, the problem to lift a direct sum from the category of unfiltered modules to the category of filtered modules is inspected \cite{MR0364351}. We don't need these refinement here. 
\end{req}


This theorem is the reason why it is interesting to work with the Gornik's version of the algebra:
\begin{thm}[Inspired from {\cite[Theorem 4]{2004math2266G}} ]\label{thm:decGe}
  When forgetting the filtration, the algebra $G^\epsilon$ is semi-simple and has the following decomposition as an algebra:
\[
G_\CC^\epsilon\simeq \bigoplus_{c \in \col(\epsilon)} \mathrm{Mat}_{n(c)}(\CC),
\]
where $n(c)$ is the number of colored non-elliptic webs whose coloring restrict to $c$ on $\epsilon$.
\end{thm}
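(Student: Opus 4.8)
The plan is to exhibit, after forgetting the filtration, an explicit basis of $G^\epsilon_\CC$ which behaves like a (rescaled) system of matrix units, indexed by pairs of colored non-elliptic $\epsilon$-webs sharing a boundary coloring, and to read the Wedderburn decomposition off it.

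First I would recall that $G^\epsilon_\CC$ is built from $\NE(\epsilon)$ exactly as $K^\epsilon$ is: as a vector space $G^\epsilon_\CC=\bigoplus_{(v,w)\in\NE(\epsilon)^2}\F'_\CC(\overline{v}w)$, where $\overline{v}w$ is the closed web obtained by gluing $w$ on top of the conjugate $\overline{v}$, the multiplication being composition of foams (and zero between morphism spaces attached to distinct pairs of webs); moreover $G^\epsilon_\CC\simeq\bigoplus_{w\in\NE(\epsilon)}Q_w$ as left modules. Combining Lemma~\ref{lem:decincolor} with Lemma~\ref{lem:colodim1}, each space $\F'_\CC(\overline{v}w)$ splits, after forgetting the filtration, into one-dimensional summands indexed by the colorings of the closed web $\overline{v}w$, \ie by pairs $\big((w,c),(v,d)\big)$ of colored non-elliptic $\epsilon$-webs with $c|_\epsilon=d|_\epsilon$; each summand is spanned by the class $e_{(w,c),(v,d)}$ of an arbitrary well-colored foam between $v$ and $w$ whose coloring restricts to $c$ and to $d$ on the two boundary webs (fixing one such representative for each pair). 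Counting these pairs gives $\dim_\CC G^\epsilon_\CC=\sum_{c'\in\col(\epsilon)}n(c')^2$, and grouping the $e_{(w,c),(v,d)}$ by their common boundary coloring $c'\in\col(\epsilon)$ yields a direct sum of subspaces $G^\epsilon_\CC=\bigoplus_{c'\in\col(\epsilon)}B_{c'}$ with $\dim_\CC B_{c'}=n(c')^2$.

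Next I would compute the products $e_{(w,c),(v,d)}\cdot e_{(v',d'),(u,e)}$. This product is $0$ unless $v=v'$, because the multiplication of $G^\epsilon$ kills morphisms attached to distinct non-elliptic webs; it is still $0$ when $v=v'$ but $d\neq d'$, because, expressing a colored facet as the matching idempotent among $a,b,c$ of $\CC[X]/(X^3-1)$, gluing the two foams along $v$ merges, over each edge of $v$ on which $d$ and $d'$ differ, two facets carrying orthogonal idempotents; such a facet then carries $0$, so the glued foam vanishes in $\F'_\CC$ (relations~\FRp{} of figure~\ref{fig:FRfilt}). Finally, when $(v,d)=(v',d')$ the two well-colored foams are glued along the common well-coloring $d$, so around each singular edge the three facets still carry the three distinct colors occurring at the corresponding web vertex, hence the glued foam is again well-colored and, by Lemma~\ref{lem:colodim1}, non-zero. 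Thus $e_{(w,c),(v,d)}\cdot e_{(v',d'),(u,e)}=\delta_{(v,d),(v',d')}\,\gamma\,e_{(w,c),(u,e)}$ with $\gamma\in\CC^\times$; in particular each $B_{c'}$ is a two-sided ideal, $B_{c'}B_{c''}=0$ for $c'\neq c''$, and $G^\epsilon_\CC=\bigoplus_{c'}B_{c'}$ as $\CC$-algebras.

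It remains to identify each $B_{c'}$ with $\mathrm{Mat}_{n(c')}(\CC)$. Its basis $\{e_{(w,c),(v,d)}\}$, with $(w,c)$ and $(v,d)$ running over the $n(c')$ colored non-elliptic $\epsilon$-webs of boundary $c'$, satisfies relations of matrix-unit type with all structure constants nonzero; a $\CC$-algebra of dimension $n(c')^2$ with such a basis is isomorphic to $\mathrm{Mat}_{n(c')}(\CC)$ — either one rescales the basis to genuine matrix units using the cocycle identity forced by associativity, or one observes that for a diagonal idempotent $f$ the left module $B_{c'}f$ is simple with endomorphism ring $\CC$ and concludes by the density theorem together with the dimension count. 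Summing over $c'\in\col(\epsilon)$ gives $G^\epsilon_\CC\simeq\bigoplus_{c'\in\col(\epsilon)}\mathrm{Mat}_{n(c')}(\CC)$, which is in particular semi-simple. The one genuinely delicate point is the vanishing in the mismatched-coloring case: one has to keep track of how foam composition multiplies the facet-idempotents and invoke the orthogonality relations~\FRp{}; everything else is bookkeeping with Lemmas~\ref{lem:decincolor} and~\ref{lem:colodim1} together with the standard description of matrix algebras.
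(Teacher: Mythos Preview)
Your proof is correct and follows essentially the same route as the paper: both decompose $G^\epsilon_\CC$ into one-dimensional colored summands via Lemmas~\ref{lem:decincolor} and~\ref{lem:colodim1}, choose well-colored foam representatives, and verify that they multiply like rescaled matrix units. The paper is terser on the mismatched-coloring vanishing but gives an explicit rescaling (dividing each chosen foam by a square root of the scalar in $\overline{f}f=\lambda^2\,\mathrm{id}$, after making symmetric choices and taking identity foams on the diagonal), whereas you invoke the cocycle identity or the density theorem; both close the argument equally well.
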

\marginpar{deal with $\CC$ instead of $\QQ$}
\begin{proof}[Sketch of the proof]
  Thanks to the lemmas~\ref{lem:decincolor} and \ref{lem:colodim1}, we have the following decomposition as a vector space: 
\[ 
G_\CC^\epsilon\simeq \bigoplus_{c \in \col(\epsilon)} \bigoplus_{w^1, w^2 \in \NE(\epsilon)} \bigoplus_{c_i \in \col_\epsilon(w^i)} \CC.
\]
Let $c$ be a coloring of $\epsilon$,
let us denote by $G_c^\epsilon$ the sub-space $ \bigoplus_{w^1, w^2 \in \NE(\epsilon)} \bigoplus_{c_i \in \mathrm{col}_\epsilon(w^i)} \CC$. This clearly forms a sub-algebra of $G^\epsilon$. What remains to show, is that $G_c^\epsilon$ is a matrix algebra for every $c$. The isomorphism can be explicitly constructed:  for any two non-elliptic $\epsilon$-webs $w^1$ and $w^2$ and two colorings $c_1$ and $c_2$ restricting to $c$, we fix one generator of $\F'(\overline{w^1_{c_1}}w^2_{c_2})$ \marginpar{define $\F'$} (when $w^1=w^2$ and $c_1=c_2$, we choose the identity foam, and we make symmetric choices in $(w^1,c_1)$ and $(w^2, c_2)$ ), we call it $\tensor[_{w^1_{c_1}}]{f}{_{w^2_{c_2}}}$. Now define a linear map by sending the matrix $E_{w^1_{c_1}, w^2_{c_2}}$ on the foam:
\[\frac{\tensor[_{w^1_{c_1}}]{f}{_{w^2_{c_2}}}}{\tensor[_{w^1_{c_1}}]{\lambda}{_{w^2_{c_2}}}},\quad \textrm{with $\lambda$ such that:} \quad
\tensor[_{w^2_{c_2}}]{f}{_{w^1_{c_1}}}\tensor[_{w^1_{c_1}}]{f}{_{w^2_{c_2}}} = \left(\tensor[_{w^1_{c_1}}]{\lambda}{_{w^2_{c_2}}}\right)^2 \id_{w^2_{c_2}}. 
 \]
This is routine to check that this is actually an isomorphism of $\CC$-algebras.
\end{proof}
In particular we have proven the following lemma:
\begin{lem}\label{lem:isocoloepsQw}
  Let $w_c$ and $w'_{c'}$ two colored $\epsilon$-webs such that $c$ and $c'$ restrict on the same coloring on $\epsilon$, then $\F'_\CC(w_c) \simeq \F'_{\CC}(w'_{c'})$ as (unfiltered) $G^\epsilon_\CC$-modules.
\end{lem}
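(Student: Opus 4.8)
The statement to prove is Lemma~\ref{lem:isocoloepsQw}: for colored $\epsilon$-webs $w_c$ and $w'_{c'}$ whose colorings restrict to the same coloring of $\epsilon$, the $G^\epsilon_\CC$-modules $\F'_\CC(w_c)$ and $\F'_\CC(w'_{c'})$ are isomorphic after forgetting the filtration. The plan is to read this off directly from the proof of Theorem~\ref{thm:decGe}, which already does all the work: the point is that $\F'_\CC(w_c)$ depends, up to isomorphism, only on the restriction $\bar c$ of $c$ to $\epsilon$, and not on $w$ or on the particular extension $c$.

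First I would fix the common restriction $\bar c \in \col(\epsilon)$ and recall the decomposition $G^\epsilon_\CC \simeq \bigoplus_{d\in\col(\epsilon)}\mathrm{Mat}_{n(d)}(\CC)$ from Theorem~\ref{thm:decGe}, with the explicit basis $E_{w^1_{c_1},w^2_{c_2}}$ of the block $G^\epsilon_{\bar c}$ indexed by pairs $(w^i,c_i)$ of non-elliptic colored $\epsilon$-webs with $c_i$ restricting to $\bar c$. A block $\mathrm{Mat}_n(\CC)$ has, up to isomorphism, a unique simple module $\CC^n$, and every one of the $n$ ``columns'' $\CC\cdot E_{-,w^2_{c_2}}$ realises it; modules over the other blocks $\mathrm{Mat}_{n(d)}(\CC)$ with $d\neq\bar c$ act as zero on these. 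So the task reduces to identifying, for a colored $\epsilon$-web $w_c$ with restriction $\bar c$, the module $\F'_\CC(w_c)$ with the column of $G^\epsilon_{\bar c}$ indexed by $w_c$ — or rather by any one choice of $(w',c_1)\in\NE(\epsilon)$ with $c_1$ restricting to $\bar c$ that appears in the defining direct sum
\[
\F'_\CC(w_c)\;=\;\bigoplus_{\substack{w'\in\NE(\epsilon)\\ c_1\in\col_{\bar c}(w')}}\F'_\CC(\overline{w'_{c_1}}w_c).
\]
By Lemma~\ref{lem:colodim1} each summand $\F'_\CC(\overline{w'_{c_1}}w_c)$ is one-dimensional, spanned by a well-colored foam, so $\dim_\CC\F'_\CC(w_c)=n(\bar c)$; the $G^\epsilon_\CC$-action is by pre-composition of foams, which in the $E$-basis of $G^\epsilon_{\bar c}$ sends the generator of $\F'_\CC(\overline{w'_{c_1}}w_c)$ to a scalar multiple of the generator of $\F'_\CC(\overline{w''_{c_2}}w_c)$ exactly as $E_{w''_{c_2},w'_{c_1}}$ acts on the corresponding column. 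Normalising the chosen generators by the same $\lambda$'s as in the proof of Theorem~\ref{thm:decGe} turns this into an honest isomorphism of $G^\epsilon_\CC$-modules $\F'_\CC(w_c)\cong\CC^{n(\bar c)}$, the simple module of the block $\mathrm{Mat}_{n(\bar c)}(\CC)$. Since the right-hand side manifestly does not depend on $w_c$, applying this to $w_c$ and to $w'_{c'}$ gives $\F'_\CC(w_c)\cong\F'_\CC(w'_{c'})$.

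The only genuinely substantive point — and the one I would treat as the main obstacle — is checking that the $G^\epsilon_\CC$-module structure on $\F'_\CC(w_c)$ really is the column module, i.e.\ that composing foams on the left is, in the well-colored bases supplied by Lemma~\ref{lem:colodim1}, governed by the same structure constants $\lambda$ as the matrix-algebra isomorphism of Theorem~\ref{thm:decGe}. This is the ``routine to check'' buried in that sketch, and it rests on (a) non-degeneracy of the composition pairing $(f,g)\mapsto\F'_\CC(\bar f g)$ on well-colored foams, which follows from Lemma~\ref{lem:colodim1}, and (b) the relations \FRp{} (Figure~\ref{fig:FRfilt}), in particular that a foam with two adjacent equally-colored facets vanishes, which forces the action to respect colorings and be diagonal across distinct restrictions $\bar c$. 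Once that bookkeeping is in place the isomorphism is automatic, and in fact — as the paper notes — the statement was already proven in the course of Theorem~\ref{thm:decGe}, so the ``proof'' can legitimately be a one-line appeal to that argument.
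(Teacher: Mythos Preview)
Your proposal is correct and matches the paper's approach exactly: the paper gives no separate proof, merely noting ``In particular we have proven the following lemma'' immediately after Theorem~\ref{thm:decGe}, and your argument spells out precisely why the proof of that theorem already yields the result --- namely that $\F'_\CC(w_c)$ is an $n(\bar c)$-dimensional module on which only the block $G^\epsilon_{\bar c}\simeq\mathrm{Mat}_{n(\bar c)}(\CC)$ acts nontrivially, hence is the unique simple module of that block and depends only on $\bar c$.
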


\section{Algebraic tools}

\subsection{Hattori-Stallings traces}
\label{sec:hatt-stattl-trac}

  Let $A$ be a finite dimensional $k$-algebra, and $P$ a finitely generated module, then $P$ can be seen as a direct factor of $A^k$ for some $k$. In this context, $\id_P$ can be seen as an idempotent of $Mat_k(A)$. Let us denote $\tr(P)$ the image of the trace of $\id_P$ in $A/[A,A]$.
One can show (see \cite{MR0432781}) that $\tr(P)$ does not depend on the choices, and that\marginpar{cite KAssel ?} $\tr(\cdot)$ is additive with respect to direct sum. Hence we can write this definition:

\begin{dfn}
  Let $A$ a finite dimensional algebra, the map of $\ZZ$-module
\[ 
\begin{array}{rcl}
K_0(A) & \to & A/[A,A] \\
\left[P\right] &\mapsto &\tr(P) 
\end{array}
\] is called the Hattori-Stallings trace.
\end{dfn}

The reason why we introduce the Hattori-Stallings trace in this paper is the following proposition:

\begin{prop}[{\cite[paragraph 1.2 and theorem 1.6]{MR1435369}}] \label{prop:HSTrace}
If characteristic of $k$ is equal to $0$, the map:
\[ 
\begin{array}{rcl}
$k$\otimes_\ZZ K_0(A) & \to & A/[A,A] \\
\lambda\otimes\left[P\right] &\mapsto &\lambda\tr(P) 
\end{array}
\] is injective.
\end{prop}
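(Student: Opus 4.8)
The plan is to reduce injectivity of the displayed map to a linear-independence statement and then to detect that independence with trace functionals attached to simple modules. Let $P_1,\dots,P_n$ be a complete list of the indecomposable projective $A$-modules up to isomorphism, where $P_i$ is the projective cover of the simple module $S_i$. Since $A$ is finite dimensional, the category of finitely generated projective $A$-modules is Krull--Schmidt, so $K_0(A)$ is free abelian on $[P_1],\dots,[P_n]$ and $k\otimes_\ZZ K_0(A)\cong k^n$; hence the displayed map is injective if and only if $\tr(P_1),\dots,\tr(P_n)$ are linearly independent over $k$ in $A/[A,A]$.

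The main tool I would use is the following. For a finite-dimensional left $A$-module $M$, let $\chi_M\colon A\to k$ send $a$ to the ordinary $k$-linear trace of the operator $m\mapsto am$ on $M$. This map is $k$-linear, and since $\mathrm{tr}_k(XY)=\mathrm{tr}_k(YX)$ it vanishes on $[A,A]$, so it descends to $\chi_M\colon A/[A,A]\to k$. I claim that
\[
\chi_M\bigl(\tr(P)\bigr)=\dim_k\hom_A(P,M)
\]
for every finitely generated projective $P$. Both sides are additive under direct sums in $P$ (the left-hand side because $\tr$ is, as recalled above; the right-hand side because $\hom_A(-,M)$ is), so by Krull--Schmidt it suffices to prove it for $P=Af$ with $f\in A$ a primitive idempotent. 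In that case $\tr(P)$ is the class of $f$; the operator $m\mapsto fm$ on $M$ is idempotent, so its trace equals $\dim_k(fM)$; and $fM\cong\hom_A(Af,M)$ via $\phi\mapsto\phi(f)$. This identity, together with the bookkeeping reducing it to the case $P=Af$, is the only genuinely substantial step; everything else is formal.

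Finally I would feed $M=S_i$ and $P=P_j$ into the identity. Any morphism $P_j\to S_i$ annihilates $\mathrm{rad}(A)P_j=\mathrm{rad}(P_j)$, hence factors through the head $P_j/\mathrm{rad}(P_j)\cong S_j$; so $\hom_A(P_j,S_i)\cong\hom_A(S_j,S_i)$, which by Schur's lemma is $0$ for $i\neq j$ and the division ring $D_j\eqdef\End{S_j}$ for $i=j$. Therefore the $n\times n$ matrix $\bigl(\chi_{S_i}(\tr(P_j))\bigr)_{i,j}$ is diagonal with $j$-th diagonal entry $\dim_k D_j$, a positive integer. Here is the only place the hypothesis is used: because $\mathrm{char}(k)=0$, each $\dim_k D_j$ is invertible in $k$, so this matrix is invertible over $k$. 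Consequently, if $\sum_j\lambda_j\tr(P_j)=0$ with $\lambda_j\in k$, applying $\chi_{S_i}$ gives $\lambda_i\dim_k D_i=0$, hence $\lambda_i=0$ for all $i$; this is the required linear independence, and the proposition follows.
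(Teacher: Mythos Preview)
The paper does not give its own proof of this proposition; it simply imports the result from the cited reference. So there is nothing to compare your argument against inside the paper itself.

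That said, your argument is correct and is essentially the classical proof of this fact. The key identity $\chi_M(\tr(P))=\dim_k\hom_A(P,M)$ is exactly the pairing between $K_0(A)$ and characters that makes the Hattori--Stallings trace useful; your reduction to $P=Af$ with $f$ idempotent and the observation that $\mathrm{tr}_k(f\cdot{-}\mid M)=\dim_k fM=\dim_k\hom_A(Af,M)$ is the standard justification. Feeding in the simples $S_i$ and using that $\hom_A(P_j,S_i)$ vanishes for $i\neq j$ and equals $D_j$ for $i=j$ yields the diagonal matrix with entries $\dim_k D_j$, and the characteristic-zero hypothesis is used precisely where you say, to ensure these positive integers are units in $k$. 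One tiny remark: you implicitly use that $A$ is finite dimensional (so Krull--Schmidt holds and the $S_i$, $P_i$, $D_i$ are finitely many and finite dimensional); this is indeed the standing hypothesis in the paper, so there is no gap.
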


\subsection{Some facts about graded and filtered algebras and modules} In this subsection, $\ZZ$ is endowed with a structure of $\ZZ[q,q^{-1}]$-module via setting $q=1$.
\label{sec:some-facts-about}
\begin{prop}\label{prop:inTdeg0isenough}
  Let $A$ be a finite dimensional graded algebra, we consider $J(A)$ the Jacobson radical of $A$ and $p$ the projection of $A$ onto $A/([A,A]+J(A))$, then the restriction of $p$ to $A_0$ is surjective.
\end{prop}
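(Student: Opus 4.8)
The plan is to prove the equivalent statement $A = A_0 + [A,A] + J(A)$. Since $A = A_0 \oplus \bigoplus_{d \neq 0} A_d$ as a vector space, it suffices to show that $A_d \subseteq [A,A] + J(A)$ for every $d \neq 0$.

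Fix $d \neq 0$ and a homogeneous element $x \in A_d$. The first observation is that $x$ is nilpotent: since $A$ is finite dimensional, $A_m = 0$ for $|m|$ large, and $x^n \in A_{nd}$, so $x^n = 0$ once $n$ is large enough. Now pass to $\bar A := A/J(A)$, which is a finite-dimensional semisimple algebra, and write $\pi \colon A \to \bar A$ for the quotient map; then $\pi(x)$ is a nilpotent element of $\bar A$. I would then invoke the purely algebraic fact that every nilpotent element of a finite-dimensional semisimple algebra lies in its commutator subspace $[\bar A, \bar A]$. Granting this, $\pi(x) \in [\bar A, \bar A] = \pi([A,A])$ (every commutator in $\bar A$ is the image of a commutator in $A$), so there is $c \in [A,A]$ with $\pi(x) = \pi(c)$, hence $x - c \in \ker \pi = J(A)$ and $x = c + (x-c) \in [A,A] + J(A)$. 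Summing over $d \neq 0$ gives $A = A_0 + [A,A] + J(A)$, i.e.\ $p|_{A_0}$ is surjective.

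It remains to justify the algebraic fact. By Wedderburn's theorem $\bar A$ is a finite product of matrix algebras $M_{n_i}(D_i)$ over division algebras, and the commutator subspace decomposes accordingly, so one reduces to a single $B = M_n(D)$. There $[B,B]$ coincides with the kernel of the reduced trace $\mathrm{trd} \colon B \to Z(D)$ — the inclusion $[B,B] \subseteq \ker \mathrm{trd}$ is immediate, and a dimension count (after extending scalars to a splitting field, where the reduced trace becomes the ordinary trace) gives equality — and the reduced trace of a nilpotent element is $0$, since over a splitting field a nilpotent element of $B$ becomes a nilpotent matrix.

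The grading plays a role only through the very first step (homogeneous elements of nonzero degree are nilpotent); everything else concerns the semisimple quotient. The step I expect to require the most care is the last one, the identification $[B,B] = \ker(\mathrm{trd})$ and hence the fact that nilpotents are ``traceless''; over $\QQ$ or $\CC$, the setting of this paper, it is elementary, since a semisimple algebra is then a product of matrix algebras and the commutator subspace is the product of the spaces of traceless matrices.
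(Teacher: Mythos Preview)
Your argument is correct and takes a genuinely different route from the paper's. The paper first uses (implicitly) that $J(A)$ is a homogeneous ideal, passes to the graded semisimple quotient $A/J(A)$, and then invokes a \emph{graded} Wedderburn theorem to realise each graded-simple summand as a matrix algebra in which the diagonal sits in degree $0$; since the diagonal already surjects onto the quotient by commutators, the result follows. You instead bypass the graded structure of the quotient entirely: the grading is used only to see that homogeneous elements of nonzero degree are nilpotent, and the rest of the argument takes place in the \emph{ungraded} semisimple algebra $A/J(A)$, via ordinary Wedderburn and the identification $[M_n(D),M_n(D)]=\ker(\mathrm{trd})$. So you trade the graded Wedderburn theorem for the reduced-trace characterisation of the commutator subspace; over $\QQ$ or $\CC$ both ingredients are standard, but your route has the mild advantage that it never needs $J(A)$ to be homogeneous nor any control on the grading of the quotient.
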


\begin{proof}
  As, we mod out by $J(A)$, we may suppose that $A$ is semi-simple and even simple. Thanks to a graded version of Wederburn theorem, one may assume that that $A$ is isomorphic to a matrix algebra with diagonal matrices homogeneous of degree 0 (\cite[theorem 2.10.10]{MR2046303}), and the set of diagonal matrices surjects onto $A/[A,A]$.
\end{proof}

\begin{prop}[\cite{MR2046303}]\label{prop:graded2notgraded}
  Let $A$ be a $\ZZ$-graded algebra and $B$ the same algebra where we forgot the grading. The Grothendieck group $K_0(A)$ of $A$-$\mathsf{proj}$ is naturally endowed with a structure of $\ZZ[q,q^{-1}]$-module while $K_0(B)$ has only a $\ZZ$-module structure. We have the following isomorphism:
\[
K_0(A)\otimes_{\ZZ[q,q^{-1}]}\ZZ \simeq K_0(B).
\]
\end{prop}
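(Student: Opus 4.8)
The plan is to build the isomorphism directly on generators and check it is well-defined both ways. First I would recall that $A$-$\mathsf{proj}_{\mathrm{gr}}$ is the category of finitely generated graded projective $A$-modules, with morphisms the degree-preserving maps, and that $K_0$ of this category carries a $\ZZ[q,q^{-1}]$-action where $q$ acts by the grading shift $[P]\mapsto [P\{1\}]$. Forgetting the grading gives an exact functor $A$-$\mathsf{proj}_{\mathrm{gr}}\to B$-$\mathsf{proj}$ which sends $P\{1\}$ and $P$ to the same object, so it induces a $\ZZ$-linear map $\Phi\colon K_0(A)\to K_0(B)$ that kills $(q-1)$, hence factors through $K_0(A)\otimes_{\ZZ[q,q^{-1}]}\ZZ$. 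I would then construct an inverse. Given a finitely generated projective $B$-module $P$, one wants to lift it to a graded projective $A$-module; the key input is that $A$ is finite dimensional, so by a graded version of the idempotent-lifting/Krull--Schmidt machinery every indecomposable projective $B$-module is (after forgetting the grading) of the form $Ae$ for a primitive idempotent $e$, and $e$ can be conjugated into $A_0$ (this is exactly the content used in Proposition~\ref{prop:inTdeg0isenough}, via the graded Wedderburn theorem of \cite{MR2046303}); then $Ae$ with $e\in A_0$ is a graded module lifting $P$, well-defined up to grading shift. This gives a map $\Psi\colon K_0(B)\to K_0(A)\otimes_{\ZZ[q,q^{-1}]}\ZZ$.

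The substantive points to nail down are: (i) that every graded finitely generated projective $A$-module decomposes as a direct sum of shifts of a finite list of indecomposable graded projectives $P_1,\dots,P_r$ (graded Krull--Schmidt: $A$ being finite dimensional, $\End$ of an indecomposable graded projective is graded local, so this holds — reference \cite{MR2046303}); (ii) that forgetting the grading sends $\{P_i\}$ bijectively to a complete list of indecomposable projective $B$-modules, so that $K_0(B)$ is free on $[P_1],\dots,[P_r]$ and $K_0(A)$ is free on the same symbols over $\ZZ[q,q^{-1}]$, whence $K_0(A)\otimes_{\ZZ[q,q^{-1}]}\ZZ$ is free on $[P_1],\dots,[P_r]$ over $\ZZ$; and (iii) that $\Phi$ and $\Psi$ are mutually inverse — with the bases identified this is immediate, since both send $[P_i]\mapsto[P_i]$.

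The main obstacle, and the only place where anything beyond bookkeeping happens, is step (ii): showing that two indecomposable graded projectives $P_i$, $P_j$ that become isomorphic as $B$-modules must already be isomorphic up to shift as graded modules, and conversely that no indecomposable projective $B$-module is missed. The first half is the graded Krull--Schmidt uniqueness applied carefully: an ungraded isomorphism $P_i\xrightarrow{\sim}P_j$ can be written as a sum of homogeneous maps of various degrees, and a graded-Nakayama/graded-local-endomorphism-ring argument forces one homogeneous component to be an isomorphism $P_i\xrightarrow{\sim}P_j\{d\}$ for some $d$. The second half follows because any primitive idempotent of $B=A$ lies in a matrix block and can be conjugated to one living in $A_0$, so it is the image of a primitive idempotent of the graded algebra. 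Once (ii) is in place the rest is formal, exactly as sketched above. I would cite \cite{MR2046303} for the graded Wedderburn and graded Krull--Schmidt statements rather than reproving them, mirroring the style of Proposition~\ref{prop:inTdeg0isenough}.
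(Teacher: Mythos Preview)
Your proposal is correct and follows essentially the same route as the paper: both reduce to showing that shift-classes of indecomposable graded projectives are in bijection with indecomposable ungraded projectives, and both handle the uniqueness-up-to-shift direction by decomposing an ungraded isomorphism into its homogeneous components and arguing that one component must already be an isomorphism. The only minor difference is in the gradability step, where the paper simply asserts that a direct summand of the gradable module $B^n$ is gradable, while you spell this out via lifting the idempotent into $A_0$; otherwise the arguments coincide.
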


\begin{proof}
  The $\ZZ[q,q^-1]$-module $K_0(A)$ is free and admits the isomorphism class of indecomposable projective graded modules as a base\footnote{Of course, one takes only one representant per ``grading shift'' class.}. 
The $\ZZ$-module $K_0(B)$ is free and admits the isomorphism class of indecomposable projective modules as a base. Hence the only thing we need to show is that any projective indecomposable $B$-module is gradable (and therefore becomes a $A$-module), and that two such gradings differ one from the other by an overall shift.

We consider a projective $B$-module $P$. It is a direct factor of $B^n$ for some integer $n$. But $B^n$ is gradable, so that $P$ is gradable. Now if we suppose $P$ to be indecomposable. Denote by $P_1$ and $P_2$ two $A$-modules obtained from $P$ by endowing it with two different gradings. The identity of $P$ gives two mutually inverse morphisms $f$ and $g$ of $A$-modules between $P_1$ and $P_2$. We can write $f=\sum f_i$ a decomposition of $f$ in homogeneous maps . We have $P_2 \simeq \bigoplus f_i(P_1)$. But $P_2$ is indecomposable, so that all the $f_i$'s must be trivial but one, say $i_0$. Hence $P_2$ is equal to $P_1\{i_0\}$.
\end{proof}
The following lemma is not concerned with graded modules however it deals with problems arising when forgetting the gradings.
\begin{lem}\label{lem:ZQqq2Zbasis}
  Let $M$ be a free finitely generated $\ZZ[q,q^{-1}]$-module and $(x_i)_{i\in I}$ a collection of elements of $M$ such that:
  \begin{itemize}
  \item the collection $(1_\QQ\otimes x_i)_{i\in I}$ is a $\QQ[q,q^{-1}]$-base of $\QQ\otimes_\ZZ M$,
  \item the collection $(1_\ZZ \otimes_{\ZZ[q,q^{-1}]}x_i)_{i\in I} $ is a $\ZZ$-base of $\ZZ\otimes_{\ZZ[q,q^{-1}]} M$.
  \end{itemize}
Then $(x_i)_{i\in I}$ is a base of $M$.
\end{lem}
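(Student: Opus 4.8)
The plan is to prove that $(x_i)_{i\in I}$ is a base of the free finitely generated $\ZZ[q,q^{-1}]$-module $M$ by showing the natural map $\ZZ[q,q^{-1}]^{(I)} \to M$ sending the standard basis to $(x_i)_{i\in I}$ is an isomorphism. Since $\ZZ[q,q^{-1}]$ is a Noetherian integral domain and $M$ is finitely generated and free, $I$ is finite, say $|I|=n$, and it suffices to check that this map $\Phi\colon \ZZ[q,q^{-1}]^n \to M$ is both injective and surjective. Fixing a base $(e_j)$ of $M$, the map $\Phi$ is represented by a matrix $A \in \mathrm{Mat}_n(\ZZ[q,q^{-1}])$, and the two conditions about scalar extensions translate into statements about $A$ after the ring maps $\ZZ[q,q^{-1}]\to\QQ[q,q^{-1}]$ and $\ZZ[q,q^{-1}]\to\ZZ$ (the latter being $q\mapsto 1$).

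First I would observe that the first hypothesis, that $(1_\QQ\otimes x_i)_i$ is a $\QQ[q,q^{-1}]$-base of $\QQ\otimes_\ZZ M$, forces the image of $A$ in $\mathrm{Mat}_n(\QQ[q,q^{-1}])$ to be invertible, hence $d\eqdef\det A$ is a nonzero element of $\ZZ[q,q^{-1}]$. In particular $\Phi$ is injective (a square matrix over a domain with nonzero determinant has trivial kernel), so $(x_i)_i$ is at least a linearly independent family. It remains to prove surjectivity, equivalently that $d$ is a unit of $\ZZ[q,q^{-1}]$, i.e.\ $d = \pm q^k$ for some $k\in\ZZ$. Next I would use the second hypothesis: applying $q\mapsto 1$, the matrix $A(1) \in \mathrm{Mat}_n(\ZZ)$ is the change-of-basis matrix witnessing that $(1_\ZZ\otimes x_i)_i$ is a $\ZZ$-base of $\ZZ\otimes_{\ZZ[q,q^{-1}]} M \cong \ZZ^n$, hence $A(1)$ is invertible over $\ZZ$, so $\det(A(1)) = d(1) = \pm 1$.

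Now the endgame: $d\in\ZZ[q,q^{-1}]$ satisfies $d(1)=\pm1$, and I must combine this with the information coming from $\QQ[q,q^{-1}]$-invertibility to conclude $d$ is a unit. The point is that I should not just use that $d$ is nonzero but extract more. Since $M\otimes_{\ZZ[q,q^{-1}]}\ZZ[q,q^{-1}][d^{-1}]$ is free with the $x_i$ as a base (inverting $d$ makes $A$ invertible), and also $M\otimes_{\ZZ[q,q^{-1}]}\ZZ \cong \ZZ^n$ with the $x_i$ as base, it is cleanest to argue prime-by-prime: for any prime $\mathfrak p$ of $\ZZ[q,q^{-1}]$, I want $d\notin\mathfrak p$. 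The primes containing $q-1$ are ruled out by $d(1)=\pm 1$. For a height-one prime $\mathfrak p$ not containing $q-1$: localizing $M$ at $\mathfrak p$ (a DVR, since $\ZZ[q,q^{-1}]$ is a two-dimensional regular domain — actually I should be a little careful here and instead localize at height-one primes where the ring is a PID-like localization) and using that $\ZZ[q,q^{-1}]$ is a UFD, write $d = u\prod p_i^{a_i}$ with $u$ a unit; each irreducible $p_i$ generates a height-one prime, and I claim no such $p_i$ can occur. The hard part is exactly this: ruling out that $d$ has an irreducible factor $p$ with $p(1)=\pm 1$ (which is certainly possible for elements of $\ZZ[q,q^{-1}]$, e.g.\ $q^2-q+1$), so the bare ring-theoretic data is not enough and one must genuinely use that $M$ together with the given family $x_i$ is compatible across \emph{both} specializations simultaneously — concretely, that Proposition~\ref{prop:graded2notgraded} and the preceding structural results force $M$ to arise as a $K_0$ of a graded algebra whose indecomposable-projective base, compared with the web base after $q=1$ and after $\otimes\QQ[q,q^{-1}]$, leaves no room for such a factor. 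I would therefore instead give the argument at the level of the base change maps: show that since both $(1_\ZZ\otimes x_i)$ and the image of a genuine $\ZZ[q,q^{-1}]$-base are $\ZZ$-bases of $\ZZ^n$, the matrix $A(1)$ is invertible, and since $(1_\QQ\otimes x_i)$ is a $\QQ[q,q^{-1}]$-base, $A$ is invertible over $\QQ[q,q^{-1}]$; then deduce via the exact sequence $0\to\ker\Phi\to\ZZ[q,q^{-1}]^n\xrightarrow{\Phi} M\to\mathrm{coker}\,\Phi\to 0$ that $\mathrm{coker}\,\Phi$ is a finitely generated torsion $\ZZ[q,q^{-1}]$-module killed by $d$, which vanishes after $\otimes\QQ[q,q^{-1}]$ and after $q\mapsto 1$; a finitely generated $\ZZ[q,q^{-1}]$-module that vanishes after tensoring with $\QQ$ is already $q$-torsion-free-killed... — so the genuine crux is to show a finitely generated $\ZZ[q,q^{-1}]$-module $C$ with $C\otimes\QQ[q,q^{-1}]=0$ and $C\otimes_{q=1}\ZZ=0$ must be zero, which is false in general (take $C=\ZZ[q,q^{-1}]/(q^2-q+1)$), so the proof \emph{must} use the stronger structural input, namely that $C$ is in fact the cokernel coming from comparing two honest bases arising from gradings. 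I expect this last point — pinning down exactly which extra hypothesis (gradability, or a direct appeal to Proposition~\ref{prop:graded2notgraded}) makes the cokernel vanish — to be the main obstacle, and I would resolve it by invoking that the $x_i$, being the relevant $K_0$-classes, are each $\pm q^k$ times an element of the honest base, so $A$ is monomial and hence $d=\pm q^{\sum k_i}$ outright.
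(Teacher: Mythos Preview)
Your argument goes astray at the very first deduction from the first hypothesis. You correctly observe that $A$ is invertible in $\mathrm{Mat}_n(\QQ[q,q^{-1}])$, but then you only extract that $d=\det A$ is \emph{nonzero}. That throws away almost all the information. Invertibility of $A$ over $\QQ[q,q^{-1}]$ means $d$ is a \emph{unit} of $\QQ[q,q^{-1}]$, and the unit group of $\QQ[q,q^{-1}]$ is exactly $\QQ^{*}\cdot q^{\ZZ}$. Hence $d=\lambda q^{k}$ for some $\lambda\in\QQ^{*}$ and $k\in\ZZ$; since $d\in\ZZ[q,q^{-1}]$ we get $\lambda\in\ZZ\setminus\{0\}$. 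Now the second hypothesis gives $d(1)=\lambda=\pm 1$, so $d=\pm q^{k}$ is a unit of $\ZZ[q,q^{-1}]$ and $A$ is invertible over $\ZZ[q,q^{-1}]$. That is the whole proof, and it is exactly what the paper does.

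Everything after your first paragraph is an attempt to recover from this self-inflicted loss of information. Your worry about irreducible factors like $q^{2}-q+1$ is a symptom: such an element is \emph{not} a unit of $\QQ[q,q^{-1}]$, so it could never be the determinant here. (Incidentally, your proposed counterexample $C=\ZZ[q,q^{-1}]/(q^{2}-q+1)$ to the cokernel statement is off: $C\otimes_{\ZZ}\QQ=\QQ[q,q^{-1}]/(q^{2}-q+1)\neq 0$. A genuine counterexample to that abstract statement would be something like $\ZZ[q,q^{-1}]/(2,\,q^{2}+q+1)$, but again this is irrelevant once you use the unit-group description.) The appeal at the end to structural facts about $K_0$ and gradings is unnecessary and in any case not available: this lemma is stated and proved for an arbitrary free finitely generated $\ZZ[q,q^{-1}]$-module $M$, with no assumption that $M$ arises as a Grothendieck group.
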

\begin{proof}
  Let $(y_i)_{i\in I}$ be a base of $M$. Then, one can write the matrix of $(x_i)_{i \in I}$ in $(y_i)_{i\in I}$. Seen in  $\QQ\otimes M$ and in  $\ZZ\otimes_{\ZZ[q,q^{-1}]} M$, this matrix is invertible. Its determinant is therefore invertible in ${\QQ[q,q^{-1}]}$  and in $\ZZ$ when $q=1$. Hence, it is equal to $\pm q^k$ for some $k$, therefore this matrix is invertible in $\ZZ[q,q^{-1}]$ and hence $(x_i)_{i\in I}$ is a base of $M$.
\end{proof}
We finish this section with a result about filtered modules and their associated graded modules. 
This theorem is essential for our main result, it permits to use to go from the category of graded module to the category of filtered one, which in our case is easier to deal with.
 We want to point out that the relationship between filtered and graded modules is more complicated that it may first appear. The interested reader may have a look at \cite{MR0364351}.

Let $A$ will be a filtered finite dimensional $k$-algebra. We suppose, furthermore, that its filtration $F_*$ is decreasing ($F_{n+1}(A) \subset F_{n}(A)$) and right-bounded ($F_n$ for $n>n_0$). The modules we consider are finite dimensional over $k$ and their filtrations are, as well, right-bounded\footnote{The finite dimensional and the bounded properties ensure, in the language of \cite{MR0364351}, that the filtrations are discrete (and hence complete and separated) and exhaustive. Note that to fit our context we choose to consider decreasing filtration, while \cite{MR0364351} deals with increasing filtrations.}.
\begin{thm}[{\cite[Theorem 6]{MR0364351}}]\label{thm:sjodin}
  Let $M$ is a projective graded $E(A)$-module, then it can be lifted into a filtered module \ie there exists a module $M'$ such that $E(M')\simeq M$. Further more, if $N$ is a filtered module, then any map $f:M\to E(N)$ can be realized as $E(g)$ for a map $g:M'\to N$.
\end{thm}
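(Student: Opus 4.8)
The plan is to reduce the whole statement to one idempotent-lifting lemma, after disposing of the free case, which is essentially formal. First I would observe that a free graded $E(A)$-module $\bigoplus_i E(A)\{d_i\}$ lifts to the free filtered $A$-module $P'\eqdef\bigoplus_i A\{d_i\}$ with shifted filtrations, since $E(A\{d\})=E(A)\{d\}$; and that for any filtered module $N$, a filtration-preserving $A$-linear map $A\{d\}\to N$ is precisely an element of the appropriate step $F_{\bullet}(N)$ of the filtration, so that the surjection $F_\bullet(N)\twoheadrightarrow E(N)_\bullet$ shows every graded map $E(A)\{d\}\to E(N)$ is $E$ of some filtered map $A\{d\}\to N$. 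Taking direct sums, every graded map $E(P')\to E(N)$ is realised as $E$ of a filtered map $P'\to N$. This settles both assertions when $M$ is free.

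Next I would pass from free to projective via idempotents. Write the given projective graded module as $M=\im e$ for an idempotent $e$ of $\operatorname{End}_{E(A)}(P'')$, where $P''=E(P')$ is a suitable finite free graded module. The key point is that $S\eqdef\operatorname{End}_A(P')$ is naturally a filtered algebra (filter a map by how far it shifts the filtration), that $F_0(S)$ is the subalgebra of filtration-preserving endomorphisms, and that—because $P'$ is free—the canonical map $E(S)\to\operatorname{End}_{E(A)}(E(P'))=\operatorname{End}_{E(A)}(P'')$ is an isomorphism of graded algebras. Under it, $e$ is homogeneous of degree $0$, i.e.\ an element of $F_0(S)/F_1(S)$. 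Now $F_1(S)$ is a two-sided ideal of $F_0(S)$, and since $P'$ is finite-dimensional over $k$ with right-bounded (hence finite) filtration, $F_N(S)=0$ for $N\gg0$, so $F_1(S)$ is nilpotent; therefore idempotents lift along $F_0(S)\twoheadrightarrow F_0(S)/F_1(S)$. Choosing an idempotent lift $\tilde e\in F_0(S)$ of $e$ and setting $M'\eqdef\tilde e(P')$ with its subspace filtration, one gets $P'=\tilde e(P')\oplus(1-\tilde e)(P')$ as \emph{filtered} modules, so $E$ commutes with this splitting and $E(M')=\im E(\tilde e)=\im e=M$. This proves the lifting assertion.

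For the assertion about morphisms, given a filtered module $N$ and a graded map $f\colon M\to E(N)$, I would compose $f$ with the graded projection $E(P')\twoheadrightarrow M$ (which is $E$ applied to the filtered projection $\tilde e\colon P'\twoheadrightarrow M'$) to obtain a graded map $\hat f\colon E(P')\to E(N)$; by the free case $\hat f=E(\hat g)$ for some filtered $\hat g\colon P'\to N$, and restricting $\hat g$ to the filtered summand $M'\subseteq P'$ yields $g\colon M'\to N$ with $E(g)=\hat f|_M=f$, because the graded projection $E(P')\to M$ followed by the graded inclusion $M\hookrightarrow E(P')$ is the identity of $M$.

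I expect the only genuine obstacle to be the idempotent-lifting step, namely verifying carefully that $F_1(S)$ is a nilpotent two-sided ideal of $F_0(S)$: this is exactly the place where the finite-dimensionality and right-boundedness of the filtrations enter, and it is where one must keep the bookkeeping of the filtration shifts in $P'=\bigoplus_i A\{d_i\}$ and in $\operatorname{End}_A(P')$ consistent. Everything else is formal once a sign convention for the shift functor $\{\cdot\}$ on graded and on filtered modules has been fixed.
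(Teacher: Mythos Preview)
The paper does not actually prove this theorem: it is quoted verbatim from Sj\"odin \cite{MR0364351} and used as a black box, so there is no in-paper argument to compare against.

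Your proposal is correct and is the standard way one proves such a lifting result. The reduction to the free case is formal, and the heart of the matter is exactly the idempotent-lifting step you isolate: the surjection $F_0(S)\twoheadrightarrow F_0(S)/F_1(S)\cong\operatorname{End}_{E(A)}(P'')_0$ has nilpotent kernel $F_1(S)$ because the filtration on $P'$ is finite (right-bounded and finite-dimensional), so $F_N(S)=0$ for $N$ large and $F_1(S)^N\subseteq F_N(S)=0$. One small point worth making explicit in a write-up is that the filtered direct sum $P'=\tilde e(P')\oplus(1-\tilde e)(P')$ really is strict, i.e.\ $F_n(P')=\tilde e(F_n(P'))\oplus(1-\tilde e)(F_n(P'))$, which follows from $\tilde e\in F_0(S)$; this is what guarantees $E$ commutes with the splitting. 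Your lifting of morphisms via the free cover is also fine: the identity $\pi\circ\iota_M=\id_M$ on the graded side is all that is needed.
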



\section{Grothendieck group of $K^\epsilon$}

We consider the \emph{categorification map} $\phi$ given by:
\[
\begin{array}{rcl}
  {W^\epsilon} &\longrightarrow & K_0(K^\epsilon_{\QQ}\mathsf{\textrm{-}proj}_{\mathrm{gr}}) \\
  w &\longmapsto & [P_w].
\end{array}
\]
\marginpar{il faut définir $P_w$}
\begin{thm}\label{thm:mainthm}
  The map $\phi$ is an isomorphism of $\ZZ[q, q^{-1}]$-modules.
\end{thm}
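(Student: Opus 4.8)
The plan is to split the statement into injectivity and surjectivity of $\phi$. Injectivity is almost immediate: by Proposition~\ref{prop:kupcategorified} the graded Euler pairing $([P_{w_1}],[P_{w_2}])\mapsto \dim_q\hom_{K^\epsilon}(P_{w_1},P_{w_2})=\kup{\bar{w_1}w_2}q^{l(\epsilon)}$ categorifies the Kuperberg bilinear form on $W^\epsilon$ expressed in the basis $\NE(\epsilon)$. Applying Lemma~\ref{lem:uniquecoloring} iteratively to shrinking subfamilies of $\NE(\epsilon)$, and ordering $\NE(\epsilon)$ by its state string à la \cite{MR1684195}, produces an ordering together with colorings of $\epsilon$ for which the Gram matrix of this form is triangular with monomial diagonal entries; hence the form is non-degenerate and $\phi$ carries the basis $\NE(\epsilon)$ to a linearly independent family over $\ZZ[q,q^{-1}]$.

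For surjectivity, reduce the problem using the algebraic tools of Section~2. By Proposition~\ref{prop:graded2notgraded}, $K_0(K^\epsilon_{\QQ}\mathsf{\textrm{-}proj}_{\mathrm{gr}})\otimes_{\ZZ[q,q^{-1}]}\ZZ\cong K_0(B)$, where $B$ is $K^\epsilon_{\QQ}$ with the grading forgotten, a free $\ZZ$-module on the classes of indecomposable projectives (which need \emph{not} be the $P_w$: see Proposition~\ref{prop:Pwdec}). Lemma~\ref{lem:ZQqq2Zbasis} then tells us it is enough to show that $(\,[P_w]\,)_{w\in\NE(\epsilon)}$ is simultaneously a $\QQ[q,q^{-1}]$-basis of the rational graded Grothendieck group and a $\ZZ$-basis of $K_0(B)$; both rest on the count ``$\#\{\text{indecomposable projectives}\}=|\NE(\epsilon)|$''. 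To obtain the inequality ``$\le$'', compose $\phi$ specialised at $q=1$ over $\QQ$ with the Hattori--Stallings trace $K_0(B)\otimes\QQ\hookrightarrow B/[B,B]$ of Proposition~\ref{prop:HSTrace} (injective since $\mathrm{char}\,\QQ=0$); in Section~\ref{sec:foams-traces} the codomain $K^\epsilon/[K^\epsilon,K^\epsilon]$ is evaluated geometrically — its elements are represented by closed-up foams modulo the relations \FR{} and their colored avatars \FRp{}, and, using Lemma~\ref{lem:colodim1} together with the same state-string triangularity, one shows $\dim_\QQ K^\epsilon/[K^\epsilon,K^\epsilon]=|\NE(\epsilon)|$. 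Since the source of $\phi|_{q=1}$ has $\QQ$-dimension $|\NE(\epsilon)|$ and the composite is injective, it is an isomorphism, so $\phi|_{q=1}$ over $\QQ$ is already onto; combined with the injectivity of $\phi$ and the palindromicity of the graded Euler form (whose Gram matrix is unitriangular up to an overall $q$-shift) this pins the graded rank to $|\NE(\epsilon)|$ and yields the $\QQ[q,q^{-1}]$-basis statement.

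It remains to upgrade the $q=1$ isomorphism from $\QQ$ to $\ZZ$, i.e. to show $(\,[P_w]\,)$ is a $\ZZ$-basis of $K_0(B)$ — this is where Gornik's deformation enters. Lift each $P_w$ via Theorem~\ref{thm:sjodin} to the filtered $G^\epsilon$-module $Q_w$ (recall $E(G^\epsilon)=K^\epsilon$ and $E(Q_w)=P_w$) and forget the filtration: by Proposition~\ref{prop:decQw} and Lemma~\ref{lem:isocoloepsQw}, $\overline{Q_w}\cong\bigoplus_{c\in\col(w)}S_{c|_\epsilon}$, where the $S_{\bar c}$ are the simple $G^\epsilon_\CC$-modules attached to the matrix blocks of Theorem~\ref{thm:decGe}. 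Since the lifting respects finite direct sums, the integral decomposition matrix $D=(\text{multiplicity of the indecomposable }Q\text{ in }P_w)$ satisfies $C=DM'$, with $C=(|\col_{\bar c}(w)|)_{w,\bar c}$ and $M'$ the matrix of multiplicities of the $S_{\bar c}$ in the $\overline{Q'}$. One more application of Lemma~\ref{lem:uniquecoloring} extracts a square submatrix of $C$ which is lower-unitriangular in the state-string order, so $\det C=\pm1$ on it and therefore $\det D=\pm1$; hence $(\,[P_w]\,)$ is a $\ZZ$-basis of $K_0(B)$ and, a posteriori, there are exactly $|\NE(\epsilon)|$ indecomposable projectives. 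Feeding the two basis statements into Lemma~\ref{lem:ZQqq2Zbasis} shows $(\,[P_w]\,)_{w\in\NE(\epsilon)}$ is a $\ZZ[q,q^{-1}]$-basis of $K_0(K^\epsilon_{\QQ}\mathsf{\textrm{-}proj}_{\mathrm{gr}})$, which is exactly the assertion that $\phi$ is an isomorphism.

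I expect the main obstacle to be the foam-theoretic evaluation of $\dim_\QQ K^\epsilon/[K^\epsilon,K^\epsilon]$: one must reduce an arbitrary closed-up foam modulo \FR{} and \FRp{} to a controlled normal form and match the resulting count with $|\NE(\epsilon)|$ exactly, all the while negotiating the gap between the graded world (Proposition~\ref{prop:inTdeg0isenough}) and the merely filtered Gornik picture — the failure of direct-sum lifting for filtered modules, flagged in the remark following Proposition~\ref{prop:decQw} and analysed in \cite{MR0364351}.
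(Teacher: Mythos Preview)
Your overall architecture matches the paper's: injectivity from non-degeneracy of the Kuperberg form, a dimension bound via the Hattori--Stallings trace realised on foams in the solid torus, and integrality via Gornik's deformation, all glued by Lemma~\ref{lem:ZQqq2Zbasis}. But the middle step, as you state it, does not go through.

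The claim $\dim_\QQ K^\epsilon/[K^\epsilon,K^\epsilon]=|\NE(\epsilon)|$ is false. Proposition~\ref{prop:isoTe} identifies $K^\epsilon/[K^\epsilon,K^\epsilon]$ with $T(\epsilon)$, and Proposition~\ref{prop:spanTe} says $T(\epsilon)$ is spanned by the \emph{dotted} tori $w\times\S^1$; each non-elliptic $w$ contributes many linearly independent dotted versions, so this space is strictly larger than $|\NE(\epsilon)|$. The paper does not compute $\dim T(\epsilon)$; it composes the Hattori--Stallings trace with the further quotient by the Jacobson radical and then invokes Proposition~\ref{prop:inTdeg0isenough} to see that the degree-$0$ part $T_0(\epsilon)$ already surjects onto $K^\epsilon/([K^\epsilon,K^\epsilon]+J)$. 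In degree $0$ the only surviving generators from Proposition~\ref{prop:spanTe} are the \emph{undotted} $w\times\S^1$, which gives $\dim T_0(\epsilon)\le|\NE(\epsilon)|$ and hence the needed inequality. You cite Proposition~\ref{prop:inTdeg0isenough} only in your final caveat paragraph; it is not an obstacle to be negotiated but the actual mechanism of the argument. Relatedly, the relations \FRp{} and Lemma~\ref{lem:colodim1} live in the filtered theory $\F'_\CC$ and have no role in evaluating $T(\epsilon)$; invoking them here is a category error.

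For the $\ZZ$-basis step your determinant argument is headed in the right direction but is garbled as written: ``the multiplicity of the indecomposable $Q$ in $P_w$'' mixes $G^\epsilon$-modules with $K^\epsilon$-modules. The paper's version is cleaner and avoids matrices entirely: given any projective $M$, write $[M]=\sum_w\lambda_w[P_w]$ with $\lambda_w\in\QQ$, lift $M$ to a filtered $G^\epsilon$-module $M'$ via Theorem~\ref{thm:sjodin}, and pass to $G^\epsilon_\CC$ where, by Theorem~\ref{thm:decGe} and Proposition~\ref{prop:decQw}, one has $\sum_w\lambda_w\,|\col_c(w)|=\mu_c\in\ZZ$ for every coloring $c$ of $\epsilon$. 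If some $\lambda_w$ were non-integral, Lemma~\ref{lem:uniquecoloring} applied to the set of offending webs produces a $c_0$ isolating exactly one of them, forcing $\mu_{c_0}\notin\ZZ$ --- contradiction. No unitriangular submatrix extraction is needed.
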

The are two key arguments: 
\begin{itemize}
\item the proposition \ref{prop:spanTe} read with algebraic tools will give that the map $\phi$ induces an isomorphism when taking the tensor product with $\QQ$ (see corollary \ref{cor:isoQ}). 
\item the proposition~\ref{prop:ZZqqbase} will do the transition from $\QQ$ to $\ZZ$.
  
\end{itemize}

\subsection{Foams and traces}
\label{sec:foams-traces}

\begin{dfn}\label{dfn:Tespilon}
  Let $T(\epsilon)$ be the $\ZZ$-module spanned by foams in the cylinder $D^2\times \S^1$  with boundary $\epsilon\times \S^1$ modulo foam relations \FR{} (we only allow foam relations inside the solid torus). We could do a universal construction for the torus (because for $\epsilon$ admissible, $\epsilon\times \S^1$ and $\emptyset$ are cobordant in the solid torus)
\end{dfn}

\begin{dfn}\label{dfn:closure}
  Let $w$ be an $\epsilon$-web and $f$ a $(w,w)$-foam. The \emph{closure of $f$}, denoted by $t(f)$ is the element of $T(\epsilon)$ obtained from $f$ by gluing the two ends of $f$ along $w$. This notion extends linearly.
\end{dfn}

We introduced these objects because of the following proposition:

\begin{prop}\label{prop:isoTe}
  The closure induced an isomorphism of $\ZZ$-module between $T(\epsilon)$ and $K^\epsilon_{\ZZ}\otimes_{(K^\epsilon_{\ZZ})^{\textrm{e}}} K^\epsilon_{\ZZ}$, where $(K^\epsilon_{\ZZ})^{\textrm{e}}$ is the envelopping algebra of $K^\epsilon_{\ZZ}$ \ie{}$K^\epsilon_{\ZZ} \otimes_\ZZ(K^\epsilon_{\ZZ})^{\textrm{op}}$.
\end{prop}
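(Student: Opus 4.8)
The plan is to exhibit an explicit $\ZZ$-linear map in each direction and check that they are mutually inverse. Recall that by the definition of $K^\epsilon$ we have $K^\epsilon_\ZZ = \bigoplus_{w_b, w_t \in \NE(\epsilon)} \tensor[_{w_b}]{K}{^\epsilon_{w_t}}$, where $\tensor[_{w_b}]{K}{^\epsilon_{w_t}} = \hom_{\mathbb{WT}}(w_t, w_b)$ is the space of $(w_b, w_t)$-foams modulo the foam relations \FR. The coequalizer $K^\epsilon_\ZZ \otimes_{(K^\epsilon_\ZZ)^{\mathrm e}} K^\epsilon_\ZZ$ is, by definition of the enveloping algebra, the quotient of $K^\epsilon_\ZZ \otimes_\ZZ K^\epsilon_\ZZ$ by the relations $ab\otimes c = a\otimes bc$ for $a,b,c \in K^\epsilon_\ZZ$; equivalently it is $K^\epsilon_\ZZ$ modulo the subspace generated by commutators $ab - ba$, i.e. $K^\epsilon_\ZZ/[K^\epsilon_\ZZ, K^\epsilon_\ZZ]$. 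First I would pin down this identification, and note that under the matrix-like grading of $K^\epsilon_\ZZ$ the only diagonal blocks $\tensor[_w]{K}{^\epsilon_w}$ survive the passage to the quotient (an off-diagonal foam $a \in \tensor[_{w_b}]{K}{^\epsilon_{w_t}}$ with $w_b\neq w_t$ equals $1_{w_b} a 1_{w_t} = 1_{w_b}\cdot a 1_{w_t} - a1_{w_t}\cdot 1_{w_b} = \ldots$, is a commutator, hence zero), so that the target is $\bigoplus_{w} \tensor[_w]{K}{^\epsilon_w}/(\text{cyclic relations})$.

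Next I would define the closure map. Given a $(w,w)$-foam $f$, gluing the two copies of $w$ together produces a foam in $D^2\times\S^1$ with boundary $\epsilon\times\S^1$, and after checking that this descends through the relations \FR{} (which are local and can be performed inside the solid torus — this is exactly why $T(\epsilon)$ was set up with that proviso) one gets a well-defined $\ZZ$-linear map $t\colon \bigoplus_w \tensor[_w]{K}{^\epsilon_w} \to T(\epsilon)$. The key point is that $t(fg) = t(gf)$ for composable foams $f\in\tensor[_{w_1}]{K}{^\epsilon_{w_2}}$, $g\in\tensor[_{w_2}]{K}{^\epsilon_{w_1}}$: after closing up, $fg$ closed and $gf$ closed are isotopic in the solid torus (one slides the gluing circle around the $\S^1$ factor), hence equal in $T(\epsilon)$. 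This shows $t$ kills the cyclic relations and off-diagonal blocks, so it factors through a map $\bar t\colon K^\epsilon_\ZZ\otimes_{(K^\epsilon_\ZZ)^{\mathrm e}}K^\epsilon_\ZZ \to T(\epsilon)$.

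For the inverse, I would go the other way: any foam $F$ in $D^2\times\S^1$ with boundary $\epsilon\times\S^1$ can be cut along a web $w$ — more precisely, isotope $F$ so that it meets a chosen meridian disk $D^2\times\{*\}$ transversely in a closed $\epsilon$-web, then apply the square/digon relations from \FR{} to reduce that slice to a $\ZZ[q,q^{-1}]$-combination of non-elliptic $\epsilon$-webs (this reduction is possible by Proposition~\ref{prop:closed2elliptic} together with the foam relations, and is the content of Proposition~\ref{prop:relFR}); cutting along such a non-elliptic slice $w$ then exhibits $F$ as $t$ of a $(w,w)$-foam. One must check this is well-defined: independence of the choice of meridian and of the isotopy used to make it transverse. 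This gives a map $T(\epsilon)\to \bigoplus_w \tensor[_w]{K}{^\epsilon_w}/(\text{cyclic})$ which is visibly a two-sided inverse to $\bar t$ on generators.

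The main obstacle is the well-definedness of the cutting map in the reverse direction: a foam in the solid torus does not come with a preferred meridian slice, and different transverse slices are related by moving through non-transverse configurations where the foam has a critical point with respect to the $\S^1$-coordinate. Controlling this amounts to a Morse-theoretic / movie-move style argument showing that the two resulting $(w,w)$-foams (for two nearby slices $w, w'$) have the same closure-class — which will follow because passing a critical point changes the cut foam by precomposing/postcomposing with an elementary cobordism, and these changes are absorbed by the cyclicity relation $fg\sim gf$. I expect this to be a somewhat technical but standard argument once the local models of foam singularities (the Y-times-interval picture) are brought in. Everything else — linearity, compatibility with \FR, the commutator computation — is routine.
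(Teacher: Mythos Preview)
Your proposal is correct and has the same geometric content as the paper's proof, but the paper organizes it more economically. Instead of constructing an explicit inverse via cutting along a meridian (and then worrying about its well-definedness), the paper simply checks that the induced map $\tilde t\colon K^\epsilon_\ZZ/[K^\epsilon_\ZZ,K^\epsilon_\ZZ]\to T(\epsilon)$ is surjective (declared ``obvious'') and injective. Injectivity is a one-liner: the only relations imposed in $T(\epsilon)$ are the \FR{} relations, and these are local---they live in balls inside the solid torus, hence cannot wrap around the $\S^1$ direction and therefore already hold in the foam description of $K^\epsilon_\ZZ$ before closing. This single observation replaces the Morse-theoretic movie-move argument you anticipate; your argument would go through, but the locality remark makes it unnecessary. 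Your cyclicity check $t(fg)=t(gf)$ and your reduction of an arbitrary meridian slice to a non-elliptic web via \FR{} are exactly what justify the paper's ``$[K^\epsilon_\ZZ,K^\epsilon_\ZZ]\subset\ker t$'' and ``obviously onto'' steps respectively, so you have all the right ingredients---just assembled into a somewhat heavier structure than the paper uses.
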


\begin{proof}
  Let us first detail the definition of the isomorphism. Setting $t(f)= 0$ for any $(w_1,w_2)$-foam with $w_1\neq w_2$,  the map $t$ of definition \ref{dfn:closure} gives us a map from $K^\epsilon_{\ZZ}$ to $T(\epsilon)$. It is clear that $[K^\epsilon_{\ZZ}, K^\epsilon_{\ZZ}]$ is included in the kernel of $t$, therefore it induces the map $\tilde{t}$ we are looking for. The map $\tilde{t}$ is obviously onto, so what remains to show is its injectivity. It is clear because the foam relations \FR{} used to defined $T(\epsilon)$ are local (lies in balls embedded in the solid torus), hence already exist in the foam description of $K^\epsilon_{\ZZ}$.
\end{proof}

\begin{dfn}
We say that an $\epsilon$-web $w$ contains a $\lambda$ (resp.~a $\cap$, resp.~an $H$) in position $i$ if next to $\epsilon_i$ and $\epsilon_{i+1}$, the $\epsilon$-web $w$ looks like one of the pictures of figure~\ref{fig:dfnUHY}.
\begin{figure}[ht]
  \centering
  \begin{tikzpicture}[xscale = 0.7, yscale=0.5]
    \input{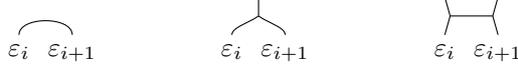}
  \end{tikzpicture}
  \caption{From left to right: a $\lambda$, a $\cap$ and a $H$.}
  \label{fig:dfnUHY}
\end{figure}
\end{dfn}
\begin{notation}
Let $w$ be an $\epsilon$-web $w$ containing a $\cap$ (\resp{}a $\lambda$) in position $i$. We denote by $\epsilon'$ the sequence of signs obtained from $\epsilon$ by removing $\epsilon_i$ and $\epsilon_{i+1}$ (\resp{}by replacing $\epsilon_i$ by $-\epsilon_i$ and by removing $\epsilon_{i+1}$). We denote by $w'$ the $\epsilon'$-web obtained from $w$ by removing the $\cap$ (\resp{}by replacing the $\lambda$ by a single strand).
Let $w$ be an $\epsilon$-web $w$ containing an $H$ in position $i$. We denote by $\epsilon'$ (\resp{}$\epsilon''$) the sequence of signs obtained from $\epsilon$ by replacing $\epsilon_i$ and $\epsilon_{i+1}$ by their inverses (\resp{}by removing $\epsilon_i$ and  $\epsilon_{i+1}$). We denote by $w_{||}$ the $\epsilon'$-web  obtained from $w$ by replacing the $H$ by two vertical strands and by $w_{\_}$ the $\epsilon''$-web obtained from $w$ by replacing the $H$ by a $\cup$ (\ie by removing the $H$ and joining together the two ``new'' ends of $w$). 
\end{notation}

\begin{lem}\label{lem:UHYinNE}
Every non-elliptic $\epsilon$-web contains at least a $\lambda$, a $\cap$ or an $H$.
\end{lem}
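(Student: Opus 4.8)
The statement is the web-theoretic analogue of Proposition~\ref{prop:closed2elliptic}, which guarantees that a closed web contains a circle, a digon or a square; here we are cutting along $\RR\times\{0\}$ and asking what the boundary behaviour of a non-elliptic $\epsilon$-web must look like near two consecutive boundary points. The plan is to argue by a Euler-characteristic count on the web itself, together with the ``opening up'' trick: given a non-elliptic $\epsilon$-web $w$, form a closed web $\hat w$ by closing $w$ up with a suitable fixed non-elliptic pattern at the top (this is possible since $\epsilon$ is admissible), chosen so that the new part creates neither a digon nor a square straddling the boundary. Then $\hat w$ has some faces; if $\hat w$ had an internal digon or square, so would $w$ (contradicting non-ellipticity, unless the digon/square uses the boundary), so the ``small'' face produced by Proposition~\ref{prop:closed2elliptic} must touch the arc along which we closed up, which forces $w$ to contain a $\lambda$, a $\cap$ or an $H$ in some position $i$.

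\textbf{Key steps, in order.} First I would set up the combinatorial bookkeeping: view $w$ as a bipartite planar trivalent graph with $l(\epsilon)$ univalent boundary vertices on the line $\RR\times\{0\}$, and count the faces using $V - E + F = 2$ for $\hat w$ (or the appropriate relative version directly for $w$), recording that each internal vertex is trivalent and each boundary vertex is univalent. Second, I would translate the three forbidden local pictures into graph-theoretic statements about the faces incident to the boundary: a $\cap$ in position $i$ is exactly a boundary face bounded by a single edge joining $\epsilon_i$ to $\epsilon_{i+1}$; a $\lambda$ in position $i$ is a boundary face whose boundary walk meets the line in the arc between positions $i$ and $i+1$ and uses exactly one internal vertex; an $H$ is the next case, one internal edge and two internal vertices. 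Third, I would run the standard Euler-characteristic inequality argument (as in Kuperberg, and as in the proof of Proposition~\ref{prop:closed2elliptic} in \cite{MR1172374}): assuming $w$ contains no $\lambda$, no $\cap$ and no $H$, every face touching the boundary has ``combinatorial length'' at least some bound, and every internal face has length at least $6$ by non-ellipticity; summing $\sum_F (\mathrm{length}(F) - k)$ and comparing with $2E$ yields a contradiction with $F = E - V + 2$ and the vertex-degree count. In effect one reduces to Proposition~\ref{prop:closed2elliptic} applied to a closed-up web and then checks that the guaranteed small face must involve the boundary arc.

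\textbf{Main obstacle.} The routine part is the Euler count; the delicate point is the choice of the closure pattern and the careful case analysis showing that a digon or square of the closed web $\hat w$ that is \emph{not} entirely inside $w$ must manifest in $w$ precisely as one of the three configurations $\lambda$, $\cap$, $H$ (and not as some other boundary phenomenon), i.e.\ pinning down exactly which boundary faces of $w$ can be ``small'' and matching them with the pictures in figure~\ref{fig:dfnUHY}. One must also handle degenerate situations: very short sequences $\epsilon$, a vertex-less strand from a boundary point straight to another boundary point (giving a $\cap$ immediately), and the orientation/sign constraints ($\epsilon_i = -\epsilon_{i+1}$ forced for a $\cap$, etc.), so that the claimed configuration is compatible with the orientations. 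Once the bookkeeping is fixed, the conclusion is immediate; I expect the write-up to be short, citing \cite{MR1172374} or \cite{MR1403861} for the underlying Euler-characteristic lemma and spelling out only the boundary case distinction.
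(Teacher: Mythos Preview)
Your approach is correct and is essentially the paper's one-line proof: reduce to Proposition~\ref{prop:closed2elliptic} applied to a closure of $w$. The ``main obstacle'' you flag disappears if you simply take the closure to be the mirror double $\bar{w}w$: the reflection symmetry forces every face meeting the gluing line to be symmetric, and a symmetric circle, digon, or square is exactly (the double of) a $\cap$, $\lambda$, or $H$ in $w$ between two consecutive boundary points---no clever closure pattern or separate Euler-characteristic count is needed.
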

\begin{proof}
  It follows immediately from proposition~\ref{prop:closed2elliptic}.
\end{proof}

\begin{lem}\label{lem:tracecircle}
  Let $w$ be an $\epsilon$-web with a $\cap$ in position $i$, and $f$ a $(w,w)$-foam then $t(f)$ is equal to a sum (with coefficients) of:
  \begin{enumerate}
  \item a disjoint union of $t(g)$ and $\cap \times \S^1$ where $g$ is
    a (sum of) $(w',w')$-foam(s),
  \item a disjoint union of $t(h)$ and $\cap \times \S^1$ with one dot where $h$ is
    a (sum of) $(w',w')$-foam(s),
  \item a disjoint union of $t(j)$ and $\cap \times \S^1$ with two dots where $j$ is
    a (sum of) $(w',w')$-foam(s).
  \end{enumerate}
  The local aspects of $t(g)$, $t(h)$ and $t(j)$ are depicted on figure~\ref{fig:tracecircle}.
  \begin{figure}[ht]
    \centering
    \begin{tikzpicture}[scale =0.65]
      \begin{scope}[yscale= -1]
  \draw (0,0) ellipse (2cm and 0.5cm);
  \draw (0,0.1) arc (90:108: 2.5cm);
  \draw (0,0.1) arc (90:72: 2.5cm);
  \draw (0,-0.1) arc (-90:-104: 3.5cm);
  \draw (0,-0.1) arc (-90:-76: 3.5cm);
  \draw (-2,0) .. controls +(0,-1.2) and +(0,-1.2) .. (2,0);
  \draw[dotted] (-2,0) arc (180:360:0.62 and 0.5);
  \draw[dotted] (2,0) arc (360:180: 0.62 and 0.5);
  \draw[very thin, <-] (0,0.5) -- (-0.1, 0.75) -- (-2.1, 0.75);
  \draw[very thin, <-] (0,0.1) -- (-0.1, 0.25) -- (-2.1, 0.25);
  \node[left] at (-2.15, 0.25) {$\epsilon_i\times \mathbb{S}^1$};
  \node[left] at (-2.15, 0.75) {$\epsilon_{i+1}\times \mathbb{S}^1$};
 \end{scope}

\begin{scope}[yscale= -1, xshift = 5 cm]
  \draw (0,0) ellipse (2cm and 0.5cm);
  \draw (0,0.1) arc (90:108: 2.5cm);
  \draw (0,0.1) arc (90:72: 2.5cm);
  \draw (0,-0.1) arc (-90:-104: 3.5cm);
  \draw (0,-0.1) arc (-90:-76: 3.5cm);
  \draw (-2,0) .. controls +(0,-1.2) and +(0,-1.2) .. (2,0);
  \draw[dotted] (-2,0) arc (180:360:0.62 and 0.5);
  \draw[dotted] (2,0) arc (360:180: 0.62 and 0.5);
  \fill (0, -0.3) ellipse (2pt and 1.5pt);
 \end{scope}

\begin{scope}[yscale= -1, xshift = 10cm]
  \draw (0,0) ellipse (2cm and 0.5cm);
  \draw (0,0.1) arc (90:108: 2.5cm);
  \draw (0,0.1) arc (90:72: 2.5cm);
  \draw (0,-0.1) arc (-90:-104: 3.5cm);
  \draw (0,-0.1) arc (-90:-76: 3.5cm);
  \draw (-2,0) .. controls +(0,-1.2) and +(0,-1.2) .. (2,0);
  \draw[dotted] (-2,0) arc (180:360:0.62 and 0.5);
  \draw[dotted] (2,0) arc (360:180: 0.62 and 0.5);
  \fill (-0.2, -0.3) ellipse (2pt and 1.5pt);
  \fill (+0.2, -0.3) ellipse (2pt and 1.5pt);
 \end{scope}
    \end{tikzpicture}
    \caption{The foams $t(g)$, $t(h)$ and $t(j)$.}
    \label{fig:tracecircle}
  \end{figure}
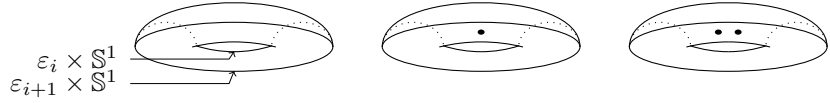
\end{lem}

\begin{proof}
  Thanks to the surgery relation we can cut $f$ next to the circle constituted of the $\cap$ of $w$ and its mirror image.  After taking the closure this gives us the wanted expression for $t(f)$.
\end{proof}

\begin{lem}\label{lem:tracedigon}
  Let $w$ be an $\epsilon$-web with a $\lambda$ in position $i$, and $f$ a $(w,w)$-foam then $t(f)$ is equal to a sum (with coefficients) of:
  \begin{enumerate}
  \item $t(g)$ glued together with $\lambda \times \S^1$ where $g$ is
    a (sum of) $(w',w')$-foam(s),
  \item $t(h)$ glued with $\lambda \times S^1$ with a dot next to $\epsilon_i \times \S^1$ where $h$ is a (sum of) $(w',w')$-foam(s). 
  \end{enumerate}
\end{lem}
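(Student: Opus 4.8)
The argument runs parallel to the proof of Lemma~\ref{lem:tracecircle}, replacing the circle coming from the $\cap$ by the tube sitting above the single edge of the $\lambda$. Write $v$ for the trivalent vertex of the $\lambda$ at position $i$, write $e$ for the single edge of $w$ emanating from $v$, and let $F_e$ be the facet of $f$ lying above $e$. Since $f$ is a $(w,w)$-foam, in a neighbourhood of the locus along which $f$ is glued to itself to form the closure $t(f)$, the foam $f$ is the product $w\times[0,\eta]$; in particular inside $t(f)$ the facet $F_e$ contains an annulus of the form $(\text{sub-arc of }e)\times\S^1$ with trivialised neighbourhood. Apply one of the relations \FR{} (the surgery relation, figure~\ref{fig:surg}) along a core circle $\gamma$ of this annulus, i.e. cut $f$ next to $\gamma$. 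This rewrites $t(f)$ as a $\ZZ$-linear combination of foams in which $F_e$ has been cut along $\gamma$ and capped on each side, the total number of dots distributed over the two caps being at most two.

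The circle $\gamma$ separates $t(f)$ into the part lying ``above'' the vertex $v$ and the part carrying the rest of $w$. On the first part, $f$ was the product of the $\lambda$-tent with an interval, so after the cut and closure this part is a copy of $\lambda\times\S^1$ (carrying the caps and dots produced by the surgery), glued along the circle above $e$ to the second part. On the second part, capping the severed end of $F_e$ turns the closure into $t(g)$ for a $(w',w')$-foam $g$: here one uses that collapsing the $\lambda$-tent to a single strand ending at position $i$ is exactly the passage from $w$ to $w'$ (with $\epsilon_i$ replaced by $-\epsilon_i$ and $\epsilon_{i+1}$ deleted), so that ``$w$ with the $\lambda$-tent removed and $e$ capped'' and ``$w'$ with $e$ running to the boundary circle $\{\text{position }i\}\times\S^1$'' describe the same closed foam once the two parts are glued back together. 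Thus $t(f)$ is a combination of terms of the shape ``$t(g)$ glued to $\lambda\times\S^1$'', each $\lambda\times\S^1$ carrying at most two dots placed on the three facets meeting the singular circle $\{v\}\times\S^1$.

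It remains to normalise the dot configurations. The dot migration relations (figure~\ref{fig:localrel}) applied at the singular circle $\{v\}\times\S^1$ express, as usual for $\sll_3$-foams, that the three variables $X_1,X_2,X_3$ attached to the facets around that circle satisfy $X_1+X_2+X_3=0$, $X_1X_2+X_2X_3+X_3X_1=0$ and $X_1X_2X_3=0$. Using these together with the digon relation and the evaluations of dotted theta-foams, one rewrites every two-dot configuration, and the one-dot configuration placed on the ``wrong'' leg, in terms of the configuration with no dot and the configuration with a single dot on the leg facet adjacent to $\epsilon_i\times\S^1$; the resulting coefficients are absorbed into the ``(sum of)'' $(w',w')$-foams $g$ and $h$. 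This produces exactly the two families (1) and (2). The genuinely delicate point is precisely this last bookkeeping: one must check that the two-dot terms and the stray one-dot term really collapse onto terms (1) and (2), rather than surviving as a third independent term as happens in the $\cap$-case of Lemma~\ref{lem:tracecircle}.
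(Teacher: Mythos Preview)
Your argument has a genuine geometric gap right at the start. You claim that the facet $F_e$ of $t(f)$ lying over the edge $e$ contains an annulus $(\text{sub-arc of }e)\times\S^1$ and then surger along its core. But the only product structure you invoke is the one near the \emph{seam} of the closure: the condition that $f$ agree with $w\times[0,\eta]$ for $z$ near $0$ and near $1$ gives, after gluing, merely a rectangle $e\times(\text{short arc in }\S^1)$ inside $F_e$, not an annulus winding all the way around the $\S^1$-factor of the solid torus. For $z$ in the interior of $[0,1]$ the foam $f$ is under no constraint to be product-like near $e$: the singular arc issuing from $v\times\{0\}$ may terminate at any vertex of $w$ on either boundary copy, so in general there is no circle of the form $\{p\}\times\S^1$ lying in $F_e$ on which to perform the surgery. (Note that the product structure in the $y$-direction near the side boundary \emph{does} produce honest annuli inside the two leg facets --- but cutting those does not detach a $\lambda\times\S^1$ piece, since the $e$-facet still joins the $\lambda$-region to the rest of the foam.) Without the core circle, the separation into ``$\lambda\times\S^1$ plus $t(g)$'' never takes place, and the subsequent dot-normalisation is moot.

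The paper avoids this by using the digon relation rather than ordinary surgery. The two copies of the $\lambda$ on the top and bottom boundaries of $f$, joined through the side strands at positions $i$ and $i+1$, assemble into a bigon on $\partial f$ (equivalently, the bigon in $\bar w w$). Applying the foam digon relation there rewrites $f$ directly as a sum of two $(w,w)$-foams $f_1+f_2$ (figure~\ref{fig:tracedigon}), each of which, once closed, visibly has the form $t(g)$ glued to $\lambda\times\S^1$ with the prescribed dot pattern. The digon relation is precisely the ``singular'' analogue of surgery needed when the tube to be cut carries a seam of singular points, and it yields exactly two terms --- so the dot-migration bookkeeping you flag as delicate never arises.
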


\begin{proof}
  Thanks to the digon relation we can write $f$ as a sum of two $(w,w)$-foams $f_1$ and $f_2$ with (see figure \ref{fig:tracedigon}), the foam $f_1$ corresponds to the first term, the foam $f_2$ to the second.
  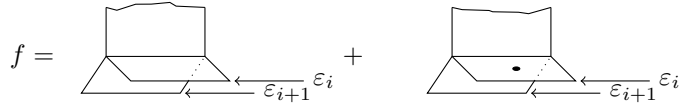
\begin{figure}[ht]
    \centering
    \begin{tikzpicture}[scale = 0.65]
      \node at (-1.5, 0) {$f = $};

\begin{scope}[decoration={random steps, segment length=2mm}]
  \draw (0,1) -- (0,0) -- (2,0) --  (2,1);
  \draw[thin, decorate] (0,1) -- (2,1);
  \draw (2,0) -- (2.5,-0.5) --(0.5, -0.5) -- (0,0);
  \draw (0,0) -- (-0.5, -0.75) -- (1.5, -0.75) -- (1.666 ,-0.5);
  \draw[dotted] (1.666, -0.5) -- (2,0); 
  \node[right] at (3, -0.75) {$\epsilon_{i+1}$};
  \node[right] at (4, -0.5) {$\epsilon_{i}$};
  \draw[<-, thin] (1.6, -0.75) -- (3, -0.75);
  \draw[<-, thin] (2.6, -0.5) -- (4, -0.5);
\end{scope}
\node at (5, 0) {$+$};
\begin{scope}[xshift= 7cm, decoration={random steps, segment length=2mm}]
  \draw (0,1) -- (0,0) -- (2,0) --  (2,1);
  \draw[thin, decorate] (0,1) -- (2,1);
  \draw (2,0) -- (2.5,-0.5) --(0.5, -0.5) -- (0,0);
  \draw (0,0) -- (-0.5, -0.75) -- (1.5, -0.75) -- (1.666 ,-0.5);
  \draw[dotted] (1.666, -0.5) -- (2,0); 
  \filldraw (1.3, -0.25) ellipse (2pt and 1pt);
  \node[right] at (3, -0.75) {$\epsilon_{i+1}$};
  \node[right] at (4, -0.5) {$\epsilon_{i}$};
  \draw[<-, thin] (1.6, -0.75) -- (3, -0.75);
  \draw[<-, thin] (2.6, -0.5) -- (4, -0.5);
\end{scope}
    \end{tikzpicture}
    \caption{The first term in the sum is $f_1$, the second is $f_2$.}
    \label{fig:tracedigon}
  \end{figure}

\end{proof}

\begin{lem}\label{lem:tracequare}
  Let $w$ be an $\epsilon$-web with an $H$ in position $i$, and $f$ a $(w,w)$-foam, then $t(f)$ if equal to a sum (with coefficients) of:
\begin{enumerate}
\item $t(g)$ glued together with $H\times \S^1$ along ($(\epsilon'_i\cup \epsilon'_{i+1}) \times \S^1$) where $g$ is a (sum of) $(w_{||}, w_{||})$-foam(s).
\item $t(h_1) \cup (\cap_i \times \S^1)$ where $h_1$ is a (sum of) $(w_{\_}, w_{\_})$-foam(s).
\item  $t(h_2) \cup (\cap_i \times \S^1)$ with a dot on $\cap_i\times \S^1$ where $h_2$ is a (sum of) $(w_{\_}, w_{\_})$-foam(s).
\end{enumerate}
\end{lem}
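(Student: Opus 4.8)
The plan is to follow exactly the template of Lemmas~\ref{lem:tracecircle} and~\ref{lem:tracedigon}: in those proofs one used the surgery relation (\resp the digon relation) to cut $f$ inside a ball surrounding the distinguished local feature of $w$, and then took the closure of each resulting summand. Here the distinguished feature is an $H$, so the relevant tool from figure~\ref{fig:localrel} is the square relation, which expresses the identity foam of $w$, localized in a ball around the $H$, as a $\QQ$-linear combination of foams of two shapes: a ``parallel-strands'' piece that factors through the web $w_{||}$, and two ``cap--cup'' pieces (one of them decorated with a single dot) that factor through the web $w_{\_}$.

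Concretely, first I would insert the square relation into $f$ inside a ball around the $H$ of $w$; this writes $f$ as a combination of $(w,w)$-foams, each of which either factors through $w_{||}$ (one term) or through $w_{\_}$ (two terms, one of them carrying a dot). Then I would apply the closure operation $t$ of Definition~\ref{dfn:closure} termwise. For the $w_{||}$-type summand, gluing the two ends of $f$ along $w$ leaves the local parallel part untouched, so it becomes $H\times\S^1$ glued along $(\epsilon'_i\cup\epsilon'_{i+1})\times\S^1$ to the closure $t(g)$ of a (sum of) $(w_{||},w_{||})$-foam(s); this is case~(1). For the two $w_{\_}$-type summands, the local cap--cup part of the foam, once the closure identifies the top and bottom copies of $w$, closes up into a disjoint annulus $\cap_i\times\S^1$ (carrying a dot in the decorated case), while the remainder of the foam is the closure $t(h_1)$ (\resp $t(h_2)$) of a (sum of) $(w_{\_},w_{\_})$-foam(s); these are cases~(2) and~(3).

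The main obstacle I anticipate is the purely topological bookkeeping in the last step: one must check that after closure the cap--cup summands genuinely split off $\cap_i\times\S^1$ as a connected component disjoint from $t(h_1)$ or $t(h_2)$, and --- using the notation introduced just before Lemma~\ref{lem:UHYinNE} --- that the orientations and the resulting boundary sequences ($\epsilon'$ for $w_{||}$, $\epsilon''$ for $w_{\_}$) match exactly what is declared. Since the square relation and all the surgeries used are local (supported in balls embedded in the solid torus) and already hold in the foam description of $K^\epsilon$, no new relation has to be verified; the work is entirely in drawing the pictures carefully, and a figure in the style of figures~\ref{fig:tracecircle} and~\ref{fig:tracedigon} would make the three cases transparent.
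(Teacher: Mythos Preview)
Your overall strategy --- apply a local foam relation near the $H$ and then take the closure termwise --- is the right one, and your treatment of the parallel-strands piece (case~(1)) matches the paper.  The gap is in the other piece.

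The square relation of figure~\ref{fig:localrel} expresses $f$ as a sum of only \emph{two} $(w,w)$-foams $f_1+f_2$, not three: the parallel piece $f_1$ (your case~(1)) and a single second piece $f_2$.  This $f_2$ does not factor through $w_{\_}\sqcup\cap_i$, and its closure $t(f_2)$ does \emph{not} split off $\cap_i\times\S^1$ as a disjoint component.  Near $(\epsilon_i\cup\epsilon_{i+1})\times\S^1$ the closure $t(f_2)$ is a half-torus glued to a half-sphere along a singular circle (this is the content of figure~\ref{fig:tracesquare2} in the paper).  That singular circle is contractible in the solid torus --- it does not wind around the $\S^1$ factor --- and bounds a disk, so the \emph{bamboo relation} of figure~\ref{fig:localrel} applies to it.  It is this second relation that produces the two summands carrying a disjoint annulus $\cap_i\times\S^1$ (one undotted, one dotted), yielding cases~(2) and~(3).

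In other words, the ``purely topological bookkeeping'' you flagged as the main obstacle is precisely where your argument breaks: the cap--cup picture does not close up to a disjoint annulus by itself, and the missing ingredient is the bamboo step.
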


\begin{proof}
This is a direct application of the square relation: the foam $f$ can be expressed as a sum of two foams $f_1$ and $f_2$ which around $(\epsilon_i \cup \epsilon_{i+1}) \times I$ have the shapes given by figure~\ref{fig:tracesquare}. 
\begin{figure}[ht]
  \centering
  \begin{tikzpicture}[xscale=0.7, yscale= 0.4]
    \begin{scope}[xshift=0cm, yscale= 1= 0cm, rotate = -90]
\draw (0,0) -- (2,0);
\draw (2,0) -- (1,1);
\draw (1,1) -- (-1,1); 
\draw (2,0) -- +(0.7,-0.3);
\draw (1,1) -- +(0.5,0.5); 
\draw (0,-4) -- (2,-4);
\draw[densely dotted] (2,-4) -- (1,-3);
\draw[densely dotted] (1,-3) -- (0,-3); 
\draw (-1,-3) -- (0,-3); 
\draw (2,-4) -- ++(0.7,-0.3)--+(0,4);
\draw[densely dotted] (1,-3) -- ++(0.5,0.5)--+(0,3);
\draw (1.5,0.5) -- (1.5,1.5); 
\draw (0,-4) -- +(0,4);
\draw (2,-4) -- +(0,4);
\draw[densely dotted] (1,-3) -- +(0,3);
\draw (1,0)-- +(0,1);
\draw (-1,-3) -- +(0,4);
\draw[<-, very thin] (2.7 ,-2.3) -- (3.3,-2.3);
\draw[<-, very thin] (1.5 ,1) -- (3.3,1);
\node[below] at (3.3, -2.3) {$\epsilon_i$};
\node[below] at (3.3, 1) {$\epsilon_{i+1}$};
\end{scope}
\begin{scope}[xshift=9cm, yscale = 1, decoration={markings, mark=at
     position 0.5 with {\arrow{>}}},postaction={decorate}, rotate = -90]
\fill[gray,opacity=0.5] (-1,1) ..controls +(0,-1.5) and +(-0.1,0).. (-0.5,-0.8) -- ++(2,0).. controls +(-0.1,0) and +(0,-1.5) .. (1,1);
\fill[gray,opacity=0.5] (0,0) ..controls +(0,-0.8) and +(0.1,0).. (-0.5,-0.8) -- ++(2,0).. controls +(0.1,0) and +(0,-0.8) .. (2,0);
\fill[gray,opacity=0.5] (-1,-3) ..controls +(0,0.8) and +(-0.1,0).. ++(0.5,0.8) -- ++(2,0).. controls +(-0.1,0) and +(0,0.8) .. (1,-3);
\fill[gray,opacity=0.5] (0,-4) ..controls +(0,1.5) and +(0.1,0).. (-0.5,-2.2) -- ++(2,0).. controls +(0.1,0) and +(0,1.5) .. (2,-4);
\draw (0,0) -- (2,0);
\draw (2,0) -- (1,1);
\draw (1,1) -- (-1,1); 
\draw (2,0) -- +(0.7,-0.3);
\draw (1,1) -- +(0.5,0.5); 
\draw (0,-4) -- (2,-4);
\draw[densely dotted] (2,-4) -- (1,-3);
\draw[densely dotted] (1,-3) -- (-1,-3); 
\draw(-0.1,-3) -- (-1,-3); 
\draw (2,-4) -- ++(0.7,-0.3)--+(0,4);
\draw[densely dotted] (1,-3) -- ++(0.5,0.5)--+(0,0.3);
\draw[densely dotted] (1.5,-2.2) -- (1.5,0.5);
\draw (1.5,0.5)-- (1.5,1.5 );
\draw[densely dotted] (1,-3) ..controls +(0,0.8) and +(-0.1,0) .. ++(0.5,0.8).. controls +(0.1,0) and +(0,1.5) .. (2,-4);
\draw (-1,-3) ..controls +(0,0.8) and +(-0.1,0) .. ++(0.5,0.8).. controls +(0.1,0) and +(0,1.5) .. (0,-4);
\draw (1,1) ..controls +(0,-1.5)  and +(-0.1,0) .. (1.5,-0.8).. controls +(0.1,0) and +(0,-0.8) .. (2,0);
\draw (-1,1) ..controls +(0,-1.5) and +(-0.1,0) .. (-0.5,-.8).. controls +(0.1,0) and +(0,-0.8) .. (0,0);
\draw (-0.5,-0.8)-- +(2,0);
\draw (-0.5,-2.2)-- +(2,0);
\draw[<-, very thin] (2.7 ,-2.3) -- (3.3,-2.3);
\draw[<-, very thin] (1.5 ,1) -- (3.3,1);
\node[below] at (3.3, -2.3) {$\epsilon_i$};
\node[below] at (3.3, 1) {$\epsilon_{i+1}$};
\end{scope}
  \end{tikzpicture}
  \caption{The local aspects of $f_1$ (on the left) and of $f_2$ (on the right). }
  \label{fig:tracesquare}
\end{figure}
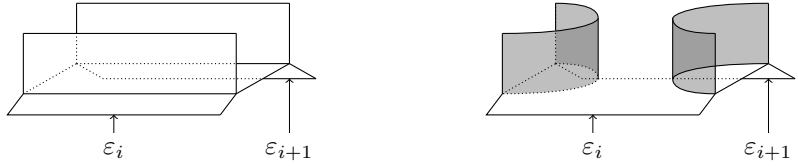

The foam $f_1$ can be obtained by gluing an $H\times I$ to  $(w_{||}, w_{||})$-foam a along $(\epsilon'_i \cup \epsilon'_{i+1})\times I$. This gives the $t(g)$.
Around $(\epsilon_i \cup \epsilon_{i+1}) \times \S^1$ the foam $t(f_2)$ looks like the figure~\ref{fig:tracesquare2}.
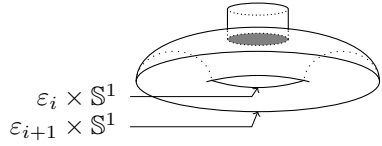
\begin{figure}[ht]
  \centering
  \begin{tikzpicture}[scale= 0.8]
    \begin{scope}[yscale=- 1]
  \draw (0,0) ellipse (2cm and 0.5cm);
  \draw (0,0.1) arc (90:108: 2.5cm);
  \draw (0,0.1) arc (90:72: 2.5cm);
  \draw (0,-0.1) arc (-90:-104: 3.5cm);
  \draw (0,-0.1) arc (-90:-76: 3.5cm);
  \draw (-2,0) .. controls +(0,-1.2) and +(0,-1.2) .. (2,0);
  \draw[dotted] (-2,0) arc (180:360:0.62 and 0.5);
  \draw[dotted] (2,0) arc (360:180: 0.62 and 0.5);
  \draw[very thin, <-] (0,0.5) -- (-0.1, 0.75) -- (-2.1, 0.75);
  \draw[very thin, <-] (0,0.1) -- (-0.1, 0.25) -- (-2.1, 0.25);
  \node[left] at (-2.15, 0.25) {$\epsilon_i\times \mathbb{S}^1$};
  \node[left] at (-2.15, 0.75) {$\epsilon_{i+1}\times  \mathbb{S}^1$};
  \filldraw[densely dotted, fill= gray] (0,-0.7) ellipse (0.5 and 0.1);
  \draw[densely dotted] (-0.5,-0.7) -- (-0.5, -0.85);
  \draw[densely dotted] (0.5,-0.7) -- (0.5, -0.85);
  \draw (-0.5,-1.2) -- (-0.5, -0.85);
  \draw (0.5,-1.2) -- (0.5, -0.85);
  \draw[densely dotted] (-0.5, -1.2) arc (180:0:0.5 and 0.1);
  \draw (-0.5, -1.2) arc (-180:0:0.5 and 0.1);
 \end{scope}
  \end{tikzpicture}
  \caption{The foam $t(f_2)$. It is a half torus glued with a half-sphere, the gluing zone is gray on this picture.}
  \label{fig:tracesquare2}
\end{figure}
 We can see that there is a singular circle which does not wing around the solid tori, and which bounds a disk. Hence we can perform a bamboo relation (see figure~\ref{fig:localrel}). We end up with a sum of two terms corresponding to $t(h_1)$ and $t(h_2)$.
\end{proof}

\begin{prop}\label{prop:spanTe}
  The module $T(\epsilon)$ is spanned by $w\times \S^1$, with some dots, for $w$ non-elliptic.
\end{prop}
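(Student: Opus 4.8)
The strategy is an induction on the complexity of the web $w$ (for instance on the number of vertices, or equivalently on $-\chi$ of $w$), using Lemma~\ref{lem:UHYinNE} to reduce every non-elliptic $\epsilon$-web to a simpler one. First I would note that, by Proposition~\ref{prop:isoTe}, $T(\epsilon)$ is isomorphic to $K^\epsilon_\ZZ\otimes_{(K^\epsilon_\ZZ)^{\mathrm e}}K^\epsilon_\ZZ$, which is spanned by the closures $t(f)$ of $(w,w)$-foams $f$ with $w$ ranging over $\NE(\epsilon)$. So it suffices to show that each such $t(f)$ is a $\ZZ$-linear combination of elements of the form $w'\times\S^1$ with dots, for $w'$ non-elliptic (not necessarily the same $w$).

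The base case is when $w$ is ``minimal'', i.e.\ has no vertices at all, so $w=\id_\epsilon$ consists of $l(\epsilon)$ parallel strands; then any $(w,w)$-foam is a disjoint union of dotted disks, and its closure is exactly $w\times\S^1$ with some dots. For the inductive step, let $w$ be non-elliptic with at least one vertex. By Lemma~\ref{lem:UHYinNE}, $w$ contains a $\lambda$, a $\cap$, or an $H$ in some position $i$. In each case I apply the corresponding trace lemma:
\begin{itemize}
\item if $w$ has a $\cap$, Lemma~\ref{lem:tracecircle} writes $t(f)$ as a combination of $t(g)\sqcup(\cap\times\S^1)$ (with $0$, $1$, or $2$ dots on the $\cap\times\S^1$ factor) where $g$ is a $(w',w')$-foam and $w'$ has strictly fewer vertices;
\item if $w$ has a $\lambda$, Lemma~\ref{lem:tracedigon} writes $t(f)$ in terms of closures of $(w',w')$-foams glued to $\lambda\times\S^1$, again with $w'$ strictly simpler;
\item if $w$ has an $H$, Lemma~\ref{lem:tracequare} writes $t(f)$ in terms of closures of $(w_{||},w_{||})$-foams and of $(w_{\_},w_{\_})$-foams, each with strictly fewer vertices.
\end{itemize}
Here I use that $w'$, $w_{||}$, $w_{\_}$ all have strictly smaller complexity than $w$, and that the $\cap\times\S^1$, $\lambda\times\S^1$, $H\times\S^1$ pieces are fixed ``attached'' pieces not depending on $f$; so by the induction hypothesis each closure $t(g)$, $t(h)$, $t(h_1)$, $t(h_2)$ is a combination of $u\times\S^1$ (with dots) for non-elliptic $u$'s, and gluing on the fixed boundary piece turns these into $\tilde w\times\S^1$ (with dots) for the correspondingly enlarged webs $\tilde w$.

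There is one subtlety to address: after reattaching the boundary pieces, the web $\tilde w$ obtained need not be non-elliptic (e.g.\ gluing an $H\times\S^1$ back onto a non-elliptic web may create a digon or square). The fix is to absorb this into the same induction: whenever a non-non-elliptic web $u\times\S^1$ (possibly with dots) appears, apply the Kuperberg reduction moves --- which at the level of foams are exactly the digon and square relations from figure~\ref{fig:localrel}, available inside the solid torus as \FR{} relations --- to rewrite $u\times\S^1$ in terms of simpler webs times $\S^1$; these reductions strictly decrease complexity, so they terminate. The main obstacle is precisely this bookkeeping: making sure the induction is set up on a quantity (such as number of vertices of $w$, refined lexicographically by the number of vertices of any intermediate non-reduced web) that decreases both under the trace lemmas and under the Kuperberg reductions, so that the whole process terminates and expresses $t(f)$ in the claimed form. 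Once the termination is clear, the rest is a routine combination of the three trace lemmas with the foam relations.
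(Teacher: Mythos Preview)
Your overall strategy — induction via Lemmas~\ref{lem:UHYinNE}, \ref{lem:tracecircle}, \ref{lem:tracedigon}, \ref{lem:tracequare} — is exactly what the paper intends by its one-line proof, and your identification of the non-ellipticity subtlety after re-gluing an $H$ (together with the fix via Kuperberg/foam reductions and the observation that all moves strictly decrease a suitable complexity) is correct and more careful than the paper itself.

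There is, however, a genuine error in your base case. An $\epsilon$-web is by Definition~\ref{dfn:webtangle} a $(\epsilon,\emptyset)$-web tangle: it has boundary only along $\epsilon$ at the bottom and is closed at the top. Hence there is no such thing as ``$w=\id_\epsilon$ consisting of $l(\epsilon)$ parallel strands'' among $\epsilon$-webs; a non-elliptic $\epsilon$-web with no trivalent vertices is instead a disjoint union of $\cap$'s (a crossingless matching of $\epsilon$), and such a $w$ still falls under Lemma~\ref{lem:tracecircle}. Consequently your description ``any $(w,w)$-foam is a disjoint union of dotted disks, and its closure is exactly $w\times\S^1$'' is not the right picture either. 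The correct base of the induction is reached only after repeatedly peeling off $\cap$'s until $\epsilon'=\emptyset$ and $w'=\emptyset$; then a $(w',w')$-foam is a closed foam in a ball, which the relations \FR{} evaluate to an integer multiple of the empty foam $\emptyset\times\S^1$. With this corrected base case (and with the complexity taken to be, say, the number of edges of $w$, which decreases under each of the three trace lemmas as well as under any elliptic reduction), your induction goes through.
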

\begin{proof}This is a an easy consequence of the lemmas \ref{lem:UHYinNE}, \ref{lem:tracecircle}, \ref{lem:tracedigon} and \ref{lem:tracequare}.
\end{proof}

  



\subsection{Counting dimensions}
\label{sec:counting-dimension} Let us write  $W'^\epsilon$ for $\ZZ\otimes_{\ZZ[q,q^{-1}]}W^\epsilon$.
The aim of this subsection is to show the following proposition:
\marginpar{définir $\phi'$}
\begin{prop}\label{prop:isoQ}
  The map $\tilde{\phi'}\eqdef\id_\QQ\otimes\phi': \QQ\otimes W'^\epsilon \to \QQ \otimes K_0(K^\epsilon_{\QQ}\mathsf{\textrm{-}proj})$ is an isomorphism.
\end{prop}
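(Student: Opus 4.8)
The plan is to chain together the maps already constructed in the paper and count dimensions on both sides. Recall the Hattori-Stallings trace (Proposition~\ref{prop:HSTrace}): since $\QQ$ has characteristic $0$, the map $\QQ\otimes_\ZZ K_0(K^\epsilon_\QQ)\to K^\epsilon_\QQ/[K^\epsilon_\QQ,K^\epsilon_\QQ]$ sending $\lambda\otimes[P]\mapsto \lambda\,\tr(P)$ is injective. Composing with $\id_\QQ\otimes\phi'$ we obtain an injective $\QQ$-linear map
\[
\QQ\otimes W'^\epsilon \hookrightarrow K^\epsilon_\QQ/[K^\epsilon_\QQ,K^\epsilon_\QQ].
\]
Now Proposition~\ref{prop:isoTe} identifies $K^\epsilon_\QQ/[K^\epsilon_\QQ,K^\epsilon_\QQ]$ with $K^\epsilon_\QQ\otimes_{(K^\epsilon_\QQ)^{\mathrm e}}K^\epsilon_\QQ$, hence with $T(\epsilon)\otimes_\ZZ\QQ$, and Proposition~\ref{prop:spanTe} says $T(\epsilon)$ is spanned by the foams $w\times\S^1$ (with dots) for $w$ non-elliptic. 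So the target has $\QQ$-dimension at most $\sum_{w\in\NE(\epsilon)}\dim_\QQ(\text{dotted-}w\times\S^1\text{ part})$. On the other hand $\QQ\otimes W'^\epsilon$ has dimension $|\NE(\epsilon)|$ by Kuperberg's Theorem~\ref{thm:Kuperberg} (after setting $q=1$).

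The key point is therefore to show that for each fixed non-elliptic $w$, the span of $w\times\S^1$ decorated by dots is exactly one-dimensional in $T(\epsilon)\otimes\QQ$ --- equivalently, that $\tr(P_w)$ is a nonzero multiple of the class of the dotless $w\times\S^1$ and that these classes are linearly independent as $w$ ranges over $\NE(\epsilon)$. For linear independence I would invoke Lemma~\ref{lem:uniquecoloring}: there is a coloring $c$ of $\epsilon$ distinguishing a chosen maximal $w$, and one can use the colored (Gornik) version of the TQFT --- more precisely the decomposition of Theorem~\ref{thm:decGe} and Proposition~\ref{prop:decQw} --- to evaluate traces against colorings and see the matrix of $(\tr(P_w))_w$ against these coloring-functionals is triangular with nonzero diagonal. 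This pins the dimension of the target down to exactly $|\NE(\epsilon)|$, forcing the injection above to be an isomorphism.

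Assembling: the composite $\QQ\otimes W'^\epsilon \to T(\epsilon)\otimes\QQ$ is injective with source of dimension $|\NE(\epsilon)|$ and target of dimension $\le|\NE(\epsilon)|$, hence both are isomorphisms; in particular $\id_\QQ\otimes\phi'$ is injective, and since $K_0(K^\epsilon_\QQ)$ is generated by the classes $[P_w]$ (because $K^\epsilon\simeq\bigoplus_{w\in\NE(\epsilon)}P_w$, so every projective is a summand of a sum of $P_w$'s, giving surjectivity of $\phi'$ onto a generating set — one still needs that these generate $K_0$, which follows from the same count), $\id_\QQ\otimes\phi'$ is onto as well. I expect the main obstacle to be the dimension-one claim for the dotted $w\times\S^1$ part together with the nonvanishing of $\tr(P_w)$: one must actually run Lemmas~\ref{lem:tracecircle}, \ref{lem:tracedigon}, \ref{lem:tracequare} to reduce a trace of an idempotent on $P_w$ down through the $\lambda/\cap/H$ structure to a numerical coefficient, and check it does not vanish. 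The colored-foam evaluation (Lemma~\ref{lem:colodim1}) is the tool that makes this nonvanishing transparent, since a well-colored $w\times\S^1$ gives a nonzero element of $\F'_\CC$.
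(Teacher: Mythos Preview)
Your overall strategy---inject $K_0$ into $T(\epsilon)$ via Hattori--Stallings and then bound $\dim T(\epsilon)$---is the paper's strategy, but your dimension bound on the target is where it goes wrong. You want the span of all dotted $w\times\S^1$ (for fixed non-elliptic $w$) to be one-dimensional in $T(\epsilon)\otimes\QQ$. But $T(\epsilon)$ is \emph{graded} (the foam relations \FR{} are homogeneous), and $w\times\S^1$ with $k$ dots sits in degree $2k$. So the dotless and dotted versions live in different graded pieces and cannot be proportional unless the dotted ones vanish. You give no argument for that vanishing, and in fact there is no reason to expect $\dim_\QQ HH_0(K^\epsilon_\QQ)=|\NE(\epsilon)|$; for a general finite-dimensional algebra the cocenter is typically strictly larger than $K_0\otimes\QQ$. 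Your coloring/Gornik argument can detect linear independence of the dotless $w\times\S^1$, but it cannot by itself kill the higher-degree dotted classes.

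The paper fixes exactly this by passing to a further quotient: one composes the Hattori--Stallings trace with the projection $K^\epsilon/[K^\epsilon,K^\epsilon]\twoheadrightarrow K^\epsilon/([K^\epsilon,K^\epsilon]+J(K^\epsilon))$ (still injective on $\QQ\otimes K_0$), and then invokes Proposition~\ref{prop:inTdeg0isenough} to say that the degree-$0$ part of $T(\epsilon)$ already surjects onto this quotient. Among the spanning set of Proposition~\ref{prop:spanTe}, the only degree-$0$ elements are the \emph{dotless} $w\times\S^1$, and there are $|\NE(\epsilon)|$ of those. That is the missing idea in your argument. Separately, for injectivity of $\tilde\phi'$ the paper does not use colorings or Gornik at all: Proposition~\ref{prop:kupcategorified} transports the Kuperberg pairing to $\dim\hom$, and non-degeneracy of the Kuperberg form (Theorem~\ref{thm:Kuperberg} at $q=1$) gives injectivity immediately. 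The Gornik machinery is reserved for the later integrality step (Lemma~\ref{lem:ZZbase}), not for this proposition.
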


\begin{cor}\label{cor:isoQ}
  The map $\tilde{\phi}\eqdef\id_\QQ\otimes\phi: \QQ\otimes W^\epsilon \to \QQ \otimes K_0(K^\epsilon_{\QQ}\mathsf{\textrm{-}proj}_{\mathrm{gr}})$ is an isomorphism.
\end{cor}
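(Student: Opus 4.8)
The plan is to deduce Corollary~\ref{cor:isoQ} from the ungraded Proposition~\ref{prop:isoQ} by means of Proposition~\ref{prop:graded2notgraded}, the crucial point being that when one decomposes the modules $P_w$ into indecomposable projectives \emph{no grading shift occurs}, so that $\tilde\phi$ and $\tilde\phi'$ are represented by one and the same matrix.

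First I would set up the two bases. By Theorem~\ref{thm:Kuperberg} the module $W^\epsilon$ is free over $\ZZ[q,q^{-1}]$ with basis $\NE(\epsilon)$, so $\QQ\otimes W^\epsilon$ is free of rank $N:=|\NE(\epsilon)|$ over $\QQ[q,q^{-1}]$; likewise, by the proof of Proposition~\ref{prop:graded2notgraded}, $K_0(K^\epsilon_\QQ\mathsf{\textrm{-}proj}_{\mathrm{gr}})$ is free over $\ZZ[q,q^{-1}]$ with basis the classes of the indecomposable graded projective modules (one per grading-shift class), and $K_0(K^\epsilon_\QQ\mathsf{\textrm{-}proj}_{\mathrm{gr}})\otimes_{\ZZ[q,q^{-1}]}\ZZ\simeq K_0(K^\epsilon_\QQ\mathsf{\textrm{-}proj})$ compatibly with $\phi$; thus $\tilde\phi'$ is the reduction of $\tilde\phi$ at $q=1$. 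Since $\tilde\phi'$ is an isomorphism (Proposition~\ref{prop:isoQ}), the number of indecomposable graded projectives equals $\dim_\QQ(\QQ\otimes K_0(K^\epsilon_\QQ\mathsf{\textrm{-}proj}))=\dim_\QQ(\QQ\otimes W'^\epsilon)=N$. I would normalise representatives $Q_1,\dots,Q_N$ so that each $Q_j$ is concentrated in non-negative degrees and starts in degree $0$; then $([Q_j])_{1\le j\le N}$ is a $\QQ[q,q^{-1}]$-basis of $\QQ\otimes K_0(K^\epsilon_\QQ\mathsf{\textrm{-}proj}_{\mathrm{gr}})$.

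The heart of the matter is the next step. The algebra $K^\epsilon_\QQ$ is finite dimensional, non-negatively graded, with semisimple degree-$0$ part; hence, writing $S_1,\dots,S_N$ for the graded simple modules, one has a decomposition of graded left modules
\[
K^\epsilon_\QQ\;\simeq\;\bigoplus_{j=1}^{N}Q_j^{\oplus\dim S_j}
\]
with no grading shift on the right. On the other hand $K^\epsilon_\QQ\simeq\bigoplus_{w\in\NE(\epsilon)}P_w$, and Krull--Schmidt gives, for each $w$, an expansion $P_w\simeq\bigoplus_{j}\bigoplus_{k\in\ZZ}(Q_j\{k\})^{\oplus m_{w,j}^{(k)}}$ with $m_{w,j}^{(k)}\in\ZZ_{\ge 0}$. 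Comparing the two decompositions yields $\sum_{w}m_{w,j}^{(k)}=\delta_{k,0}\dim S_j$ for all $j$ and $k$; since all the $m_{w,j}^{(k)}$ are non-negative, $m_{w,j}^{(k)}=0$ whenever $k\neq 0$. Therefore $P_w\simeq\bigoplus_{j}Q_j^{\oplus m_{w,j}}$ with $m_{w,j}:=m_{w,j}^{(0)}\in\ZZ_{\ge 0}$, and in the bases $(w)_{w}$ and $([Q_j])_{j}$ the map $\tilde\phi$ is represented by the \emph{constant} integer matrix $M=(m_{w,j})$.

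It then only remains to observe that this same matrix $M$ also represents $\tilde\phi'$ in the induced bases of $\QQ\otimes W'^\epsilon$ and $\QQ\otimes K_0(K^\epsilon_\QQ\mathsf{\textrm{-}proj})$, reduction at $q=1$ leaving a constant matrix unchanged; so $M$ is invertible over $\QQ$ by Proposition~\ref{prop:isoQ}, its determinant is a non-zero scalar, hence a unit of $\QQ[q,q^{-1}]$, and $\tilde\phi$ is an isomorphism. The hard part, I expect, will be justifying the structural input ``$K^\epsilon_\QQ$ is non-negatively graded with semisimple degree-$0$ part'': one has to read off from the foam description that the identity foams $1_w$ have degree $0$ (this is exactly where the shift by $q^{l(\epsilon)}$ in Proposition~\ref{prop:kupcategorified} enters), that there are no morphisms of negative degree, and that $K^\epsilon_0$ is semisimple, keeping in mind that $K^\epsilon_0$ is strictly larger than $\bigoplus_w\QQ\,1_w$ since by Proposition~\ref{prop:Pwdec} the idempotent $1_w$ need not be primitive. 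The remaining ingredients --- compatibility of the identifications in Proposition~\ref{prop:graded2notgraded} with $\phi$ and $\phi'$, and the injectivity of $\phi$ coming from the categorified Kuperberg bracket (Proposition~\ref{prop:kupcategorified}), which here only serves as a consistency check --- are routine.
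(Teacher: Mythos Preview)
Your approach is an elaboration of the paper's one-line proof, which simply asserts that under the identification of Proposition~\ref{prop:graded2notgraded} one has $\tilde\phi=\id_{\ZZ[q,q^{-1}]}\otimes\tilde\phi'$. You correctly isolate the content of that assertion: in the natural bases, the matrix of $\tilde\phi$ has entries in $\QQ$ rather than in $\QQ[q,q^{-1}]$, so that evaluation at $q=1$ loses nothing. The paper does not justify this, and your attempt to do so is the right instinct.

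There is, however, a genuine gap and an unnecessary complication in your argument. The semisimplicity of $K^\epsilon_0$ is neither proved nor needed. What suffices is that $K^\epsilon$ be non-negatively graded. Indeed, any indecomposable graded summand $Q$ of $K^\epsilon$ has the form $K^\epsilon e$ for a homogeneous idempotent $e$ of degree $0$; hence $Q_0\ni e\neq 0$ while $Q_i=(K^\epsilon)_i\,e=0$ for $i<0$, so every indecomposable summand of $K^\epsilon=\bigoplus_w P_w$ starts exactly in degree $0$. Normalising each $Q_j$ to start in degree $0$, two indecomposable summands of $K^\epsilon$ that are isomorphic up to shift are then isomorphic with \emph{no} shift, and Krull--Schmidt gives $P_w\simeq\bigoplus_j Q_j^{\oplus m_{w,j}}$ with $m_{w,j}\in\ZZ_{\ge 0}$, as you want. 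No hypothesis on $K^\epsilon_0$ beyond this is required; your worry about $K^\epsilon_0\supsetneq\bigoplus_w\QQ\,1_w$ is therefore beside the point.

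The remaining gap is that non-negative grading of $K^\epsilon$ is not established in the paper: one needs $\kup{\bar w_1 w_2}\,q^{l(\epsilon)}\in\ZZ_{\ge 0}[q]$ for all non-elliptic $\epsilon$-webs $w_1,w_2$. This is true (it follows, for instance, from the state-sum expansion of the Kuperberg bracket, or from the cellular structure in \cite{2012arXiv1206.2118M}), but it is an honest lemma that you would have to supply; without it neither your argument nor the paper's one-liner is complete.
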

\begin{proof}[Proof of \ref{cor:isoQ} from \ref{prop:isoQ}]
Thanks to proposition~\ref{prop:graded2notgraded} we have $K_0(K^\epsilon_{\QQ}\mathsf{\textrm{-}proj}_{\mathrm{gr}}) \simeq \ZZ[q,q^{-1}]\otimes K_0(K^\epsilon_{\QQ}\mathsf{\textrm{-}proj})$, hence with the natural identification we have
$\tilde{\phi}=\id_{\ZZ[q,q^{-1}]}\otimes \tilde{\phi'}$
\end{proof}

\begin{proof}
Let us denote $\psi =\tilde{\phi}$. This is a map of $\QQ$-vector spaces, hence it is enough to show that:
\begin{itemize}
\item  the map $\psi$ is injective, (this is the lemma~\ref{lem:lem1});
\item we have $\dim \QQ\otimes K_0(K^\epsilon_{\QQ}\mathsf{\textrm{-}proj}) \leq \dim \QQ\otimes W^\epsilon$ (this is the lemma~\ref{lem:lem2}).
\end{itemize}
\begin{lem}\label{lem:lem1}
  The map $\psi$ is injective.
\end{lem}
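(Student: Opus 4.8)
The plan is to exploit the bilinear pairing on the target coming from $\hom$-dimensions. On $K_0(K^\epsilon_\QQ\textrm{-proj})$ I would put $\langle [P],[Q]\rangle \eqdef \dim_\QQ \hom_{K^\epsilon}(P,Q)$; this is a well-defined $\ZZ$-bilinear form because $\hom$ between projective modules is biadditive and every short exact sequence of projectives splits, and I extend it $\QQ$-bilinearly to $\QQ\otimes K_0(K^\epsilon_\QQ\textrm{-proj})$. By Proposition~\ref{prop:kupcategorified} the graded dimension of $\hom_{K^\epsilon}(P_{w_1},P_{w_2})$ is $\kup{\overline{w_1}w_2}\cdot q^{l(\epsilon)}$; taking total dimensions (equivalently, specialising $q=1$) and using that the Kuperberg bracket of a closed web evaluated at $1$ counts its colorings (\cite{MR1172374}) gives, for $w_1,w_2\in\NE(\epsilon)$,
\[
\langle [P_{w_1}],[P_{w_2}]\rangle = \#\col(\overline{w_1}w_2) = \sum_{c\in\col(\epsilon)} \#\col_c(w_1)\cdot\#\col_c(w_2),
\]
the last equality holding because a coloring of the closed web $\overline{w_1}w_2$ is exactly a pair of colorings of $w_1$ and of $w_2$ inducing the same coloring of $\epsilon$ (up to the relabeling of boundary colorings induced by conjugation). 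Writing $M=(M_{w_1,w_2})_{w_1,w_2\in\NE(\epsilon)}$ for this Gram matrix and $A=(\#\col_c(w))_{w\in\NE(\epsilon),\,c\in\col(\epsilon)}$, this says $M = A\,\ts{A}$.

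The main step is then to show that $M$ is invertible over $\QQ$. From $M = A\,\ts{A}$ one has $\ker_\QQ M = \ker_\QQ \ts{A}$: indeed $Mv=0$ implies $\langle\ts{A} v,\ts{A} v\rangle = \ts{v} M v = 0$, hence $\ts{A} v=0$ since the standard quadratic form $\sum_c x_c^2$ is anisotropic over $\QQ$. So it suffices to prove that the rows of $A$, namely the functions $c\mapsto \#\col_c(w)$ indexed by $w\in\NE(\epsilon)$, are linearly independent. This is where Lemma~\ref{lem:uniquecoloring} enters: if $\sum_w b_w\,\#\col_c(w)=0$ for all $c$ and $S=\{w : b_w\neq 0\}$ were non-empty, applying the lemma to the family $(w)_{w\in S}$ would produce a coloring $c_0$ of $\epsilon$ and a unique $w_0\in S$ with $\#\col_{c_0}(w_0)=1$ and $\#\col_{c_0}(w)=0$ for every other $w\in S$; evaluating the relation at $c_0$ would force $b_{w_0}=0$, a contradiction. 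Hence $A$ has rank $\#\NE(\epsilon)$ and $M$ is invertible.

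Finally I would put things together. The Kuperberg relations reduce every $\epsilon$-web to a combination of non-elliptic ones (Theorem~\ref{thm:Kuperberg}), so $\NE(\epsilon)$ spans $\QQ\otimes W'^\epsilon$ and I may write an arbitrary element as $x=\sum_{w\in\NE(\epsilon)}a_w w$. If $\psi(x)=0$, \ie $\sum_w a_w[P_w]=0$ in $\QQ\otimes K_0(K^\epsilon_\QQ\textrm{-proj})$, then pairing with $[P_{w'}]$ for each $w'\in\NE(\epsilon)$ yields $\sum_w M_{w',w}a_w=0$, that is $Ma=0$; invertibility of $M$ gives $a=0$, hence $x=0$, so $\psi$ is injective (and, as a by-product, $\NE(\epsilon)$ is a $\QQ$-basis of $\QQ\otimes W'^\epsilon$). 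The only subtle ingredient is Lemma~\ref{lem:uniquecoloring}, extracted from \cite{MR1684195}: it is precisely what makes the coloring Gram matrix non-degenerate, and once that and the translation of $\langle-,-\rangle$ into a coloring count are in hand the argument is elementary linear algebra over $\QQ$.
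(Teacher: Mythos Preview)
Your proof is correct and arrives at the same conclusion as the paper, but the route to non-degeneracy of the pairing is genuinely different. The paper argues directly from representation theory: via Proposition~\ref{prop:kupcategorified} the Kuperberg pairing on $W'^\epsilon$ is carried by $\psi$ to the hom-dimension pairing, and it is non-degenerate on the source because at $q=1$ it is the natural evaluation pairing on $\hom_{U(\sll_3)}(V^{\otimes\epsilon},\CC)$, for which non-elliptic webs form a basis by Theorem~\ref{thm:Kuperberg}; an isometry out of a space with a non-degenerate form is then automatically injective. You instead work on the target, rewrite the Gram matrix as $M=A\,\ts{A}$ via the coloring count, and establish full row rank of $A$ using Lemma~\ref{lem:uniquecoloring}. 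Your argument is more combinatorial and self-contained (it does not invoke the representation-theoretic basis statement directly), and it pleasantly reuses Lemma~\ref{lem:uniquecoloring}, which the paper otherwise only calls on later in Lemma~\ref{lem:ZZbase}; the paper's version, by contrast, is shorter and more conceptual, letting Kuperberg's theorem do the heavy lifting.
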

\begin{proof}
  Thanks to proposition~\ref{prop:kupcategorified},  the pairing induced by the Kuperberg bracket is transported via $\tilde{\phi}$ to the graded dimension of hom-spaces. Note that here we work with $\psi$ so that we are interested in this statement through the evaluation $q\mapsto 1$.
The pairing induced by the Kuperberg bracket (evaluated in $1$) is non degenerate (indeed, under natural identification, this is the pairing between $\hom_{U{\sll_3}}(\CC, V^{\otimes\epsilon})$ and $\hom_{U(\sll_3)}(V^{\otimes\epsilon},\CC)$, and the non-elliptic webs are known to be a basis of these spaces (theorem~\ref{thm:Kuperberg}), so that $\psi$ is an isometry and hence is injective.
\end{proof}
\begin{lem}\label{lem:lem2}
The following inequality holds:
 \[\dim \QQ\otimes K_0(K^\epsilon_{\QQ}\mathsf{\textrm{-}proj}) \leq \dim \QQ\otimes W^\epsilon.\]
\end{lem}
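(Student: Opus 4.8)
The plan is to route $\QQ\otimes K_0(K^\epsilon_\QQ\mathsf{\textrm{-}proj})$ through the Hattori--Stallings trace into the foam--trace module $T(\epsilon)$ and to bound the latter geometrically; since $\dim_\QQ(\QQ\otimes W'^\epsilon)=\#\NE(\epsilon)$, this is the number we must reach. First I would invoke Proposition~\ref{prop:HSTrace}: as $\QQ$ has characteristic $0$, the Hattori--Stallings trace
\[
\QQ\otimes K_0(K^\epsilon_\QQ\mathsf{\textrm{-}proj})\longrightarrow K^\epsilon_\QQ/[K^\epsilon_\QQ,K^\epsilon_\QQ]
\]
is injective, so it suffices to bound $\dim_\QQ\bigl(K^\epsilon_\QQ/[K^\epsilon_\QQ,K^\epsilon_\QQ]\bigr)$. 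Since $A/[A,A]\simeq A\otimes_{A^{\mathrm e}}A$ and degree-$0$ Hochschild homology commutes with the flat base change $\ZZ\to\QQ$ (alternatively: the short proof of Proposition~\ref{prop:isoTe} applies verbatim with $\QQ$-coefficients), Proposition~\ref{prop:isoTe} identifies this quotient with $\QQ\otimes_\ZZ T(\epsilon)$, the space of foams in $D^2\times\S^1$ with boundary $\epsilon\times\S^1$ modulo \FR{}, with rational coefficients. Thus the lemma is reduced to proving $\dim_\QQ\bigl(\QQ\otimes_\ZZ T(\epsilon)\bigr)\le\#\NE(\epsilon)$.

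For this I would start from Proposition~\ref{prop:spanTe}: $T(\epsilon)$ is spanned by the product foams $w\times\S^1$ carrying some dots, with $w$ ranging over the finite set $\NE(\epsilon)$; since three dots on a single facet vanish modulo \FR{} (the underlying Frobenius algebra being $\ZZ[X]/(X^3)$), each such foam may be taken with at most two dots per facet, so this family is finite and $\QQ\otimes_\ZZ T(\epsilon)$ is finite dimensional. The real content is then to \emph{remove the dots}: I would show, by induction on the number of vertices of $w$, that modulo \FR{} every dotted product foam $w\times\S^1$ with $w$ non-elliptic lies in the $\QQ$-span of the undotted foams $w'\times\S^1$ with $w'\in\NE(\epsilon)$. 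The induction re-uses the reduction scheme of Section~\ref{sec:foams-traces}: by Lemma~\ref{lem:UHYinNE} the web $w$ contains a $\cap$, a $\lambda$ or an $H$, and Lemmas~\ref{lem:tracecircle}, \ref{lem:tracedigon} and \ref{lem:tracequare} rewrite the closure of any $(w,w)$-foam --- in particular a dotted $w\times\S^1$ --- as a combination of closures of foams over strictly simpler webs, together with a piece $\cap\times\S^1$ (resp.\ $\lambda\times\S^1$ or $H\times\S^1$) carrying at most two dots on the glued-in annular facet; these dotted annuli are absorbed using the bamboo, digon and dot-migration relations of Figure~\ref{fig:localrel}, after which the inductive hypothesis applies to the simpler webs. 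Hence $\{\,w\times\S^1 : w\in\NE(\epsilon)\,\}$ already spans $\QQ\otimes_\ZZ T(\epsilon)$, which gives the displayed inequality; combined with the two reductions above this proves the lemma (and shows, in fact, that the Hattori--Stallings trace is an isomorphism here and $\dim_\QQ(\QQ\otimes_\ZZ T(\epsilon))=\#\NE(\epsilon)$).

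The main obstacle is exactly this dot-removal step. The naive spanning set of Proposition~\ref{prop:spanTe} has exponentially many elements for a single web, and one must verify that the relations \FR{} living near the singular circles of $w\times\S^1$ --- the bamboo and dot-migration relations --- are strong enough to collapse every dot configuration on a product foam onto its undotted representative \emph{without introducing any new web}, so that no slack is left and the bound is exactly $\#\NE(\epsilon)$; this is precisely what is needed so that the comparison with Lemma~\ref{lem:lem1} forces $\psi=\tilde{\phi'}$ to be an isomorphism.
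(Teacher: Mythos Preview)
Your reduction through the Hattori--Stallings trace and the identification $K^\epsilon_\QQ/[K^\epsilon_\QQ,K^\epsilon_\QQ]\simeq\QQ\otimes T(\epsilon)$ is exactly right, but the dot-removal step is a genuine gap: the claim that the undotted foams $\{w\times\S^1:w\in\NE(\epsilon)\}$ span $\QQ\otimes T(\epsilon)$ is simply false. Take $\epsilon=(+,-)$. Then $\NE(\epsilon)$ consists of a single cap, $K^\epsilon_\QQ\simeq\QQ[X]/(X^3)$ is commutative, and hence $\QQ\otimes T(\epsilon)\simeq K^\epsilon_\QQ/[K^\epsilon_\QQ,K^\epsilon_\QQ]\simeq\QQ[X]/(X^3)$ is $3$-dimensional while $\#\NE(\epsilon)=1$. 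The dotted annuli $\cap\times\S^1$ with one and two dots are nonzero and not in the span of the undotted one. More conceptually, every relation in \FR{} is homogeneous for the foam grading, so $T(\epsilon)$ is graded; a dotted product foam sits in strictly positive degree and therefore cannot be a nontrivial combination of the degree-$0$ undotted product foams. Your proposed induction never gets off the ground: the base case (a single cap) already fails, and the ``absorption'' of dots by bamboo/dot-migration relations only moves dots around, it does not kill them.

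The paper's argument sidesteps this by inserting the Jacobson radical. One uses that the composite $\QQ\otimes K_0(K^\epsilon_\QQ)\to K^\epsilon_\QQ/[K^\epsilon_\QQ,K^\epsilon_\QQ]\to K^\epsilon_\QQ/\bigl([K^\epsilon_\QQ,K^\epsilon_\QQ]+J(K^\epsilon_\QQ)\bigr)$ is still injective (Proposition~\ref{prop:HSTrace}), and then Proposition~\ref{prop:inTdeg0isenough} says that the degree-$0$ part of $K^\epsilon_\QQ$ already surjects onto this last quotient. Under the identification with $T(\epsilon)$ this means the degree-$0$ piece $T_0(\epsilon)$ surjects, and by Proposition~\ref{prop:spanTe} together with the degree count the only degree-$0$ generators are the undotted $w\times\S^1$. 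So the bound you need is on $\dim T_0(\epsilon)$, not on $\dim T(\epsilon)$; the extra quotient by $J$ is precisely what lets you throw away the positive-degree (dotted) part without having to prove it vanishes. In particular your final parenthetical remark is also off: the Hattori--Stallings trace into $K^\epsilon_\QQ/[K^\epsilon_\QQ,K^\epsilon_\QQ]$ is \emph{not} an isomorphism in general.
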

\begin{proof}
  From the proposition~\ref{prop:HSTrace}, we have: $\dim \QQ\otimes K_0(K^\epsilon_{\QQ}\mathsf{\textrm{-}proj})\leq \dim T(K^\epsilon_{\QQ})$. \marginpar{gérer q -> 1 ... BLABLA} Thanks to lemma~\ref{prop:isoTe}, we have the projection $\pi: T(\epsilon) \to T(K^\epsilon_{\QQ}) \simeq T(\epsilon) /J((K^\epsilon_{\QQ})$. The proposition~\ref{prop:inTdeg0isenough} says implies that when restricting $\pi$ to $T_0(\epsilon)$ the subspace of $T(\epsilon)$ spanned by foams of degree 0, the application remains surjective. The collection of element given in proposition~\ref{prop:spanTe} are all homogeneous, so that $T_0(\epsilon)$ is spanned by the elements among this collection which have degree 0. These are exactly the foams $w\times \mathbb{S}_1$ for $w$ a non-elliptic web. So that we have:
\[
\dim T(K^\epsilon_{\QQ}) \leq \dim \QQ\otimes T_0(\epsilon) \leq  \dim \QQ\otimes W^\epsilon,
\]
and finally $ \dim \QQ\otimes K_0(K^\epsilon_{\QQ}\mathsf{\textrm{-}proj}) \leq \dim \QQ\otimes W^\epsilon$
\end{proof}
Putting the two lemmas together concludes the proof of proposition~\ref{prop:isoQ}. Of course the inequalities in lemma~\ref{lem:lem2} and its poof are all equalities.
\end{proof}


\subsection{Finding the $\ZZ$-base}
\label{sec:finding-base}

The corollary~\ref{cor:isoQ} tells that the (isomorphism classes of) web-modules are a $\QQ[q,q^{-1}]$-base of $\QQ\otimes K_0(K^\epsilon-\mathsf{prof}_{\mathrm{gr}})$. To prove the theorem~\ref{thm:mainthm}, it remains to show that they are actually a $\ZZ[q,q^{-1}]$-base of $K_0(K^\epsilon\mathsf{\textrm{-}proj}_{\mathrm{gr}})$.
\begin{prop} \label{prop:ZZqqbase}
The collection $([P_w])_{w\in \NE(w)}$ is a $\ZZ[q,q^{-1}]$ base of $K_0(K^\epsilon\mathsf{\textrm{-}proj}_{\mathrm{gr}})$.
\end{prop}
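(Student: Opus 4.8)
The goal is to upgrade the rational statement of Corollary~\ref{cor:isoQ} to an integral one. The strategy is to apply Lemma~\ref{lem:ZZqqbase} to the free $\ZZ[q,q^{-1}]$-module $M = K_0(K^\epsilon\mathsf{\textrm{-}proj}_{\mathrm{gr}})$ together with the collection $x_w = [P_w]$ indexed by $w \in \NE(\epsilon)$. To do this I must verify the two hypotheses of that lemma: first, that $(1_\QQ \otimes [P_w])_{w}$ is a $\QQ[q,q^{-1}]$-basis of $\QQ \otimes_\ZZ K_0(K^\epsilon\mathsf{\textrm{-}proj}_{\mathrm{gr}})$, and second, that $(1_\ZZ \otimes_{\ZZ[q,q^{-1}]} [P_w])_w$ is a $\ZZ$-basis of $\ZZ \otimes_{\ZZ[q,q^{-1}]} K_0(K^\epsilon\mathsf{\textrm{-}proj}_{\mathrm{gr}})$. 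The first hypothesis is exactly Corollary~\ref{cor:isoQ} (the map $\tilde\phi$ being an isomorphism, it carries the spanning set $(w)_w$ of $\QQ\otimes W^\epsilon$, which is a $\QQ[q,q^{-1}]$-basis, to a basis). So the real content is the second hypothesis, concerning the \emph{ungraded} Grothendieck group.

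**Reducing to the ungraded, $q=1$ picture.** By Proposition~\ref{prop:graded2notgraded}, $\ZZ\otimes_{\ZZ[q,q^{-1}]}K_0(K^\epsilon\mathsf{\textrm{-}proj}_{\mathrm{gr}}) \simeq K_0(K^\epsilon\mathsf{\textrm{-}proj})$ (the ungraded Grothendieck group), and under this identification $1_\ZZ\otimes[P_w]$ becomes the class of $P_w$ with its grading forgotten. So I need to show that the classes $([P_w])_{w\in\NE(\epsilon)}$ span $K_0(K^\epsilon\mathsf{\textrm{-}proj})$ over $\ZZ$. Spanning is what is new: the $P_w$ need not be indecomposable (Proposition~\ref{prop:Pwdec}), so this is not automatic. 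The idea is to combine the two places where we have already done counting. On one hand, the proof of Lemma~\ref{lem:lem2} shows (now that we know all the inequalities there are equalities) that $\dim_\QQ \QQ\otimes K_0(K^\epsilon_\QQ\mathsf{\textrm{-}proj}) = \dim_\QQ \QQ\otimes W^\epsilon = \#\NE(\epsilon)$, via the chain through $T(K^\epsilon_\QQ)$ and $T_0(\epsilon)$. So the rank of the free $\ZZ$-module $K_0(K^\epsilon\mathsf{\textrm{-}proj})$ equals $\#\NE(\epsilon)$, which is the number of elements $[P_w]$ we have in hand.

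**Getting $\ZZ$-spanning, not just $\QQ$-spanning.** Having the right count is not quite enough over $\ZZ$; I need the $[P_w]$ to actually generate, not merely generate a finite-index sublattice. The key input is the semisimple Gornik picture. By Theorem~\ref{thm:decGe}, forgetting the filtration $G^\epsilon_\CC \simeq \bigoplus_{c\in\col(\epsilon)}\mathrm{Mat}_{n(c)}(\CC)$, so the simple $G^\epsilon_\CC$-modules are indexed by $\col(\epsilon)$, and by Proposition~\ref{prop:decQw} together with Lemma~\ref{lem:isocoloepsQw} the module $Q_w$ decomposes (ungraded) as $\bigoplus_{c\in\col(w)}\F'_\CC(w_c)$, where $\F'_\CC(w_c)$ depends only on the restriction $c|_\epsilon$; thus in $K_0(G^\epsilon_\CC)$ we have $[Q_w] = \sum_{c'\in\col(\epsilon)}|\col_{c'}(w)|\,[S_{c'}]$, where $S_{c'}$ is the simple labelled by $c'$. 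Now by Theorem~\ref{thm:sjodin} applied to the filtered algebra $G^\epsilon$ and its associated graded $E(G^\epsilon)=K^\epsilon$, one has $E(Q_w)=P_w$ and the functor $E$ induces an isomorphism $K_0(G^\epsilon\mathsf{\textrm{-}proj})\xrightarrow{\sim}K_0(K^\epsilon\mathsf{\textrm{-}proj})$ sending $[Q_w]$ to $[P_w]$ (lifting projectives and morphisms, hence idempotents, in both directions). Therefore it suffices to show the $[Q_w]$, $w\in\NE(\epsilon)$, generate $K_0(G^\epsilon_\CC\mathsf{\textrm{-}proj}) = \bigoplus_{c'\in\col(\epsilon)}\ZZ[S_{c'}]$ over $\ZZ$; equivalently, that the integer matrix $\bigl(|\col_{c'}(w)|\bigr)_{w\in\NE(\epsilon),\,c'\in\col(\epsilon)}$ has a maximal minor equal to $\pm1$. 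This is where Lemma~\ref{lem:uniquecoloring} enters: it says that for any subcollection — in particular, after a suitable reindexing, triangularly — there is a coloring $c'$ of $\epsilon$ realised by exactly one $w$ and with exactly one lift. Iterating this over $\NE(\epsilon)$, after reordering rows and choosing one such column per row, the relevant square submatrix is triangular with $1$'s on the diagonal, so its determinant is $\pm1$.

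**Conclusion and the main obstacle.** Combining: the $[P_w]$ form a $\ZZ$-basis of $K_0(K^\epsilon\mathsf{\textrm{-}proj}) \simeq \ZZ\otimes_{\ZZ[q,q^{-1}]}K_0(K^\epsilon\mathsf{\textrm{-}proj}_{\mathrm{gr}})$, which is the second hypothesis of Lemma~\ref{lem:ZZqqbase}; the first is Corollary~\ref{cor:isoQ}; hence by Lemma~\ref{lem:ZZqqbase} the $([P_w])_{w\in\NE(\epsilon)}$ form a $\ZZ[q,q^{-1}]$-basis of $K_0(K^\epsilon\mathsf{\textrm{-}proj}_{\mathrm{gr}})$, proving the proposition (and with it the Main Theorem, $\phi$ being the resulting isomorphism $W^\epsilon \to K_0(K^\epsilon_\QQ\mathsf{\textrm{-}proj}_{\mathrm{gr}})$). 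I expect the main obstacle to be the bookkeeping in the last step: carefully extracting from the single-coloring statement of Lemma~\ref{lem:uniquecoloring} a genuine triangularity (one needs to apply it not once but inductively to nested subcollections of $\NE(\epsilon)$, peeling off one web and one distinguishing coloring at a time, and to check the "unique lift" clause is exactly what forces the diagonal entry to be $1$ rather than a larger integer), together with making the identification $K_0(G^\epsilon\mathsf{\textrm{-}proj})\simeq K_0(K^\epsilon\mathsf{\textrm{-}proj})$ via Theorem~\ref{thm:sjodin} precise on the level of classes.
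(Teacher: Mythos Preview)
Your plan is correct and matches the paper's: reduce via Lemma~\ref{lem:ZQqq2Zbasis} to the ungraded statement (the paper's Lemma~\ref{lem:ZZbase}), then pass through the Gornik deformation $G^\epsilon$ using Theorem~\ref{thm:sjodin}, decompose $Q_w$ into simples indexed by colorings (Proposition~\ref{prop:decQw}, Theorem~\ref{thm:decGe}), and finish with Lemma~\ref{lem:uniquecoloring}. The bookkeeping you flag in your last paragraph --- that Sj\"odin's theorem really does give a well-defined additive lifting map on $K_0$, so that the relation $[M]=\sum\lambda_w[P_w]$ transports to $[M']=\sum\lambda_w[Q_w]$ --- is exactly the point the paper also passes over quickly.

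The one organisational difference worth noting: you propose to apply Lemma~\ref{lem:uniquecoloring} \emph{iteratively}, peeling off one web at a time from $\NE(\epsilon)$ to produce a full unitriangular submatrix of $\bigl(|\col_{c'}(w)|\bigr)$. The paper instead argues by contradiction on a single $[M]$: it writes $[M]=\sum_w\lambda_w[P_w]$ with $\lambda_w\in\QQ$ (from Proposition~\ref{prop:isoQ}), sets $J=\{w:\lambda_w\notin\ZZ\}$, and applies Lemma~\ref{lem:uniquecoloring} \emph{once} to $J$ to find a coloring $c_0$ hit (uniquely) by exactly one web $w_{i_0}\in J$; then the multiplicity $\mu_{c_0}=\sum_w\lambda_w|\col_{c_0}(w)|$ is an integer plus $\lambda_{w_{i_0}}\notin\ZZ$, a contradiction. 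This single application sidesteps your induction and also avoids having to check that $|\{c:n(c)>0\}|=|\NE(\epsilon)|$ (which your unimodular-minor argument implicitly needs to turn ``some maximal minor is $\pm1$'' into ``the $[Q_w]$ span $K_0(G^\epsilon_\CC)$ over $\ZZ$''). Your route works too, but the paper's is shorter.
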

Thanks to lemma~\ref{lem:ZQqq2Zbasis}, this will follow from the following lemma:
\begin{lem}\label{lem:ZZbase}
  The collection $([P_w])_{w\in \NE(w)}$ is a $\ZZ$ base of $\ZZ\otimes_{\ZZ[q,q^{-1}]}K_0(K^\epsilon\mathsf{\textrm{-}proj}_{\mathrm{gr}})$.
\end{lem}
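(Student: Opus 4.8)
The plan is to compare $K_0$ of $K^\epsilon$ with the Grothendieck group of the Gornik algebra $G^\epsilon_\CC$, which by Theorem~\ref{thm:decGe} is semisimple, using Sjödin's lifting Theorem~\ref{thm:sjodin} as the bridge and Lemma~\ref{lem:uniquecoloring} for the final combinatorial input. By Proposition~\ref{prop:graded2notgraded} the statement is equivalent to: $([P_w])_{w\in\NE(\epsilon)}$ is a $\ZZ$-basis of the \emph{ungraded} group $K_0(K^\epsilon_\QQ\text{-proj})$; and since base change along $\QQ\subset\CC$ is injective on $K_0$ of a finite-dimensional algebra (Noether--Deuring) and sends $P_w$ to the web-module of $K^\epsilon_\CC$, it is enough to prove the analogous statement for $K^\epsilon_\CC$. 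By Corollary~\ref{cor:isoQ} (base-changed) the group $K:=K_0(K^\epsilon_\CC\text{-proj})$ is free of rank $n:=|\NE(\epsilon)|$ and $L:=\ZZ\langle[P_w]\mid w\in\NE(\epsilon)\rangle$ has finite index in $K$; the goal is to show that index is $1$.

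First I would build an additive map $\Lambda\colon K\to K_0(G^\epsilon_\CC\text{-mod})$. Given a graded projective $K^\epsilon_\CC$-module $P$, Theorem~\ref{thm:sjodin} produces a filtered $G^\epsilon_\CC$-module $\widetilde P$ with $E(\widetilde P)\cong P$; using the lifting statement for morphisms together with the fact that for a discrete filtration a filtered endomorphism inducing the identity on the associated graded is invertible, one checks that $\widetilde P$ is unique up to isomorphism and that $\widetilde{P_1\oplus P_2}\cong\widetilde P_1\oplus\widetilde P_2$. Forgetting the filtration (harmless, as $G^\epsilon_\CC$ is semisimple, so every module is projective) gives a well-defined additive $\Lambda$, $[P]\mapsto[\widetilde P]$. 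Since $E(Q_w)=P_w$, the module $Q_w$ is a lift of $P_w$, so $\Lambda([P_w])=[Q_w]$. Now Proposition~\ref{prop:decQw}, Lemma~\ref{lem:isocoloepsQw} and Theorem~\ref{thm:decGe} together say that $Q_w\simeq\bigoplus_{c\in\col(w)}\F'_\CC(w_c)$, that each $\F'_\CC(w_c)$ is, as an unfiltered $G^\epsilon_\CC$-module, the simple module $S_{c'}$ of the matrix block indexed by the restriction $c'=c|_\epsilon\in\col(\epsilon)$, and that the $S_{c'}$ (over the $c'$ realised by some web) form a $\ZZ$-basis of $K_0(G^\epsilon_\CC\text{-mod})$. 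Hence
\[
\Lambda([P_w])=[Q_w]=\sum_{c'\in\col(\epsilon)}\lvert\col_{c'}(w)\rvert\,[S_{c'}],
\]
so in these coordinates $\Lambda(L)$ is the row span of the non-negative integer matrix $M=\bigl(\lvert\col_{c'}(w)\rvert\bigr)_{w\in\NE(\epsilon),\,c'\in\col(\epsilon)}$.

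Finally I would extract unimodularity from Lemma~\ref{lem:uniquecoloring}: applying it to $\NE(\epsilon)$ yields a colouring $c^{(1)}$ whose column of $M$ has exactly one nonzero entry, equal to $1$, in some row $w_1$; applying it to $\NE(\epsilon)\setminus\{w_1\}$ yields $c^{(2)}$ and $w_2$, and so on. Ordering $\NE(\epsilon)=\{w_1,\dots,w_n\}$ accordingly, the $n\times n$ submatrix $N$ of $M$ on the (automatically distinct) columns $c^{(1)},\dots,c^{(n)}$ is upper triangular with $1$'s on the diagonal, so $\det N=1$. In particular $M$ has rank $n$; combined with Corollary~\ref{cor:isoQ} (which gives $\mathrm{rk}\,K=n$ and $[K:L]<\infty$) this forces $\Lambda$ to be injective, and $\det N=1$ means the coordinate projection onto the blocks $c^{(1)},\dots,c^{(n)}$ restricts to an isomorphism $\Lambda(L)\xrightarrow{\ \sim\ }\ZZ^n$. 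A short lattice argument (the same projection is then onto on $\Lambda(K)$, which has rank $n$, hence is an isomorphism there too) gives $\Lambda(L)=\Lambda(K)$, and by injectivity of $\Lambda$ we conclude $L=K$, i.e.\ $([P_w])$ is a $\ZZ$-basis. I expect the main obstacle to be the construction of $\Lambda$ in the second paragraph --- carefully handling the interaction of Sjödin's theorem with forgetting filtrations and with direct sums; by comparison the combinatorics (Lemma~\ref{lem:uniquecoloring}) and the final lattice bookkeeping are routine.
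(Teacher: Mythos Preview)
Your proof is correct and follows essentially the same route as the paper: lift via Sj\"odin's theorem to $G^\epsilon_\CC$, decompose $Q_w$ along colourings, and use Lemma~\ref{lem:uniquecoloring} to force integrality. The paper's endgame is slightly more direct: rather than iterating the lemma to build a unimodular $n\times n$ submatrix and running a lattice argument, it argues by contradiction on a single module $M$ --- if the set $J=\{w:\lambda_w\notin\ZZ\}$ is nonempty, one application of Lemma~\ref{lem:uniquecoloring} to $J$ produces a colouring $c_0$ for which the coefficient $\mu_{c_0}$ (an integer, since $M'$ is an honest module over the semisimple algebra) equals $\lambda_{w_{i_0}}$ plus integer contributions from webs outside $J$, a contradiction. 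Conversely, your construction of the map $\Lambda$ (uniqueness of the lift, compatibility with direct sums) is spelled out more carefully than in the paper, where the passage from $[M]=\sum_w\lambda_w[P_w]$ to $[M']=\sum_w\lambda_w[Q_w]$ is asserted with only a gesture at Theorem~\ref{thm:sjodin}.
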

\begin{proof}
  Let $M$ be an (ungraded) projective $K^\epsilon$-module, thanks to \ref{prop:isoQ} we can write:
\[
[M] =  \sum_{w} \lambda_w [P_w],
\]
with $(\lambda_w)_{w\in \NE(\epsilon)}$ a collection of coefficient in $\QQ$. As $M$ is projective it is gradable and hence thanks to theorem~\ref{thm:sjodin}, we can lift it in the category of filtered modules over $G^\epsilon$ this give us a module $M'$, each of the module $P_w$ is isomorphic to $E(Q_w)$, so that, after forgetting the filtration, we have, in $K_0(G_\QQ^\epsilon)$:
\[ [M'] =  \sum_{w} \lambda_w [Q_w].\]
Tensoring the modules with $\CC$, the theorem~\ref{thm:decGe} gives this equality in $K_0(G_\CC^\epsilon)$:
\[
[M']= \sum_{c \in \textrm{col}(\epsilon)} \mu_c [G_c],
\]
where $\mu_c$ are non-negative integers.
Let us denote by $J$ the set of webs $w$ such that $\lambda_w$ is not in $\ZZ$. Suppose that this set is not empty. According to the lemma~\ref{lem:uniquecoloring} we can find a coloring $c_0$ such that there is only one web $w_{i_0}$ in $J$ and one coloring $c_{i_0}$ of $w_0$ such that $c_{i_0}$ restrict to $c_0$. But this implies that $\mu_{c_0}$ is not an integer, this is contradictory. Hence all the $\lambda$'s lie in $\ZZ$. This finish the proof of lemma~\ref{lem:ZZbase}.
\end{proof}
\subsection{Further questions}
\label{sec:further-questions}
The discussion about traces made in section \ref{sec:foams-traces}, is nothing but a computation of the 0\textsuperscript{th} group of the Hochshild homology of $K^\epsilon$. It would be interesting to compute the other homology groups. 

Even if we found a base of $K_0(K^\epsilon\mathsf{\textrm{-}proj}_{\mathrm{gr}})$, this does not give a complete collection of indecomposable projective graded modules \ie a positive base for $K_0(K^\epsilon\mathsf{\textrm{-}proj}_{\mathrm{gr}})$. It has been investigated in~\cite{LHR2}, but this classification remains completely open, such a collection would decategorify on a dual canonical base.

\bibliographystyle{alpha}
\bibliography{../../biblio}

\end{document}